\documentclass[11pt, a4paper]{article}
\usepackage[margin=4cm]{geometry}
\usepackage{hyperref}
\usepackage{color}
\usepackage[round]{natbib}
\usepackage{array}
\usepackage{tikz}
\usetikzlibrary{shapes, calc, arrows}

\usepackage{graphicx}


\usepackage{amsmath, amsfonts, amsthm, amssymb, bbm, bm}
\usepackage{mathrsfs}
\usepackage{enumerate}
\usepackage{tabularx}
\usepackage{multicol}
\usepackage{multirow}


\newcommand{\E}{\mathbb{E}}
\DeclareMathOperator{\Var}{Var}

\newcommand{\G}{\mathcal{G}}
\newcommand{\X}{\mathfrak{X}}

\DeclareMathOperator{\pa}{pa}
\DeclareMathOperator{\ch}{ch}

\DeclareMathOperator{\an}{an}






\newcommand\indep{\protect\mathpalette{\protect\independenT}{\perp}}
\def\independenT#1#2{\mathrel{\rlap{$#1#2$}\mkern2mu{#1#2}}}

\theoremstyle{plain}
\newtheorem{lem}{Lemma}[section]
\newtheorem{thm}[lem]{Theorem}
\newtheorem{prop}[lem]{Proposition}
\newtheorem{cor}[lem]{Corollary}

\theoremstyle{definition}
\newtheorem{dfn}[lem]{Definition}
\newtheorem{rmk}[lem]{Remark}
\newtheorem{exm}[lem]{Example}

\newcommand{\benum}{\begin{enumerate}}
\newcommand{\eenum}{\end{enumerate}}

\newcommand{\bitem}{\begin{itemize}}
\newcommand{\eitem}{\end{itemize}}

\newcommand{\barr}{\begin{array}}
\newcommand{\earr}{\end{array}}

\newcommand{\bmat}{\begin{pmatrix}}
\newcommand{\emat}{\end{pmatrix}}

\newcommand{\blist}{\renewcommand{\labelenumi}{\textbf{\arabic{enumi}}.} \begin{enumerate}}
\newcommand{\elist}{\end{enumerate} \renewcommand{\labelenumi}{\arabic{enumi}.}}

\newcommand{\bs}{\boldsymbol}

\def\bal#1\eal{\begin{align*}#1\end{align*}}


\setlength{\parindent}{0pt}
\setlength{\parskip}{6pt}

\usepackage{euscript}

\setlength{\parindent}{1em}
\setlength{\parskip}{0pt}

\DeclareFontFamily{OT1}{pzc}{}
\DeclareFontShape{OT1}{pzc}{m}{it}%
              {<-> s * [0.900] pzcmi7t}{}
\DeclareMathAlphabet{\mathpzc}{OT1}{pzc}%
                                 {m}{it}


\newcommand{\M}{{\mathcal{M}}}
\newcommand{\D}{{\mathcal{D}}}
\renewcommand{\X}{{\mathcal{X}}}

\renewcommand{\H}{\mathcal{H}}

\title{Graphs for margins of Bayesian networks}

\author{Robin J.\ Evans}

\begin{document}

\maketitle

\begin{abstract}
  Directed acyclic graph (DAG) models, also called Bayesian networks,
  impose conditional independence constraints on a multivariate
  probability distribution, and are widely used in probabilistic
  reasoning, machine learning and causal inference.  If latent
  variables are included in such a model, then the set of possible
  marginal distributions over the remaining (observed) variables is
  generally complex, and not represented by any DAG.  Larger classes
  of mixed graphical models, which use multiple edge types, have been
  introduced to overcome this; however, these classes do not represent
  all the models which can arise as margins of DAGs.  In this paper we
  show that this is because ordinary mixed graphs are fundamentally
  insufficiently rich to capture the variety of marginal models.

  We introduce a new class of hyper-graphs, called mDAGs, and a latent
  projection operation to obtain an mDAG from the margin of a DAG.  We
  show that each distinct marginal of a DAG model is represented by at
  least one mDAG, and provide graphical results towards characterizing
  when two such marginal models are the same.  Finally we show that
  mDAGs correctly capture the marginal structure of
  causally-interpreted DAGs under interventions on the observed
  variables.
\end{abstract}

\section{Introduction}

Directed acyclic graph (DAG) models, also known as Bayesian networks,
are widely used in probabilistic reasoning, machine learning and
causal inference \citep{bishop:07, darwiche:09, pearl:09}.  Their
popularity stems from a relatively simple definition in terms of a
Markov property, a modular structure which is computationally
scalable, their nice statistical properties, and their intuitive
causal interpretations.

DAG models are not closed under marginalization, in the sense that a
margin of a joint distribution which obeys a DAG model will not
generally be faithfully represented by any DAG.  Indeed, although DAG
models that include latent variables are widely used, they induce
models over the observed variables that are extremely complicated, and
not well understood.

Various authors have developed larger classes of graphical models to
represent the result of marginalizing (and in some cases also
conditioning) in Bayesian networks.  In the context of causal models
Pearl and Verma \citep{verma:91a, pearl:92, pearl:09} introduced mixed
graphs obtained by an operation called \emph{latent projection} to
represent the models induced by marginalizing.  These have been
developed into larger classes of graphical models such as summary
graphs, MC-graphs, ancestral graphs and acyclic directed mixed graphs
(ADMGs) which are closed under marginalization from the perspective of
conditional independence constraints \citep{koster:02, richardson:02,
  richardson:03, wermuth:11}.

As has long been known, however, these models do not fully capture the
range of marginal constraints imposed by DAG models.  In this paper we
show that no class of ordinary graphs is rich enough to do so,
regardless of how many types of edge are used.  Instead we introduce
the \emph{mDAG}, a hyper-graph which extends the idea of an ADMG to
have hyper bidirected edges; an example is given in Figure
\ref{fig:mdag}.  Intuitively, each red hyper-edge represents an
exogenous latent variable whose children are the vertices joined by
the edge.

 \begin{figure}
 \begin{center}
 \begin{tikzpicture}[rv/.style={circle, draw, very thick, minimum size=6.5mm, inner sep=1mm}, node distance=20mm, >=stealth]
 \pgfsetarrows{latex-latex};
\begin{scope}
 \node[circle, minimum size=2mm, inner sep=0mm, fill=red] (U2) {};
 \node[rv] (2) at (150:1.35) {$c$};
 \node[rv] (4) at (270:1.35) {$e$};
 \node[rv] (3) at (30:1.35) {$d$};
 \node[circle, minimum size=2mm, inner sep=0mm, fill=red, yshift=-13.5mm] (U3) at (3) {};
 \node[rv] (6) at ($(U3)+(330:1.35)$) {$f$};
 \node[rv, left of=2] (1) {$a$};
 \node[rv, left of=4, xshift=5mm] (5) {$b$};
 \draw[<->, very thick, color=red] (1) -- (2);
 \draw[->, very thick, color=red] (U2) -- (2);
 \draw[->, very thick, color=red] (U2) -- (3);
 \draw[->, very thick, color=red] (U2) -- (4);
 \draw[->, very thick, color=red] (U3) -- (6);
 \draw[->, very thick, color=red] (U3) -- (3);
 \draw[->, very thick, color=red] (U3) -- (4);
 \draw[->, very thick, color=blue] (2) -- (4);
 \draw[->, very thick, color=blue] (3) -- (6);
 \draw[->, very thick, color=blue] (5) -- (4);
 \draw[->, very thick, color=blue] (3) -- (4);
 \draw[->, very thick, color=blue] (1) .. controls +(1.2,1.2) and +(-1.2,1.2) .. (3);
\end{scope}
\end{tikzpicture}
\caption{An mDAG with maximal non-trivial bidirected edges (facets) $\{a,c\}$,
  $\{c,d,e\}$ and $\{d,e,f\}$.}
\label{fig:mdag}
\end{center}
\end{figure}
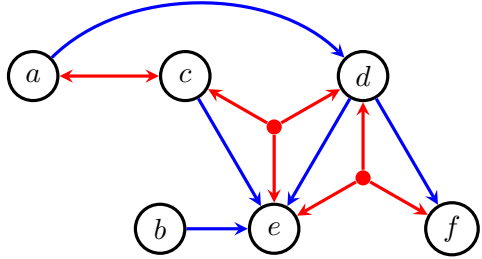

We show that mDAGs are the natural graphical object to represent
margins of DAG models.  They are rich enough to represent the variety
of models that can be induced observationally, and to graphically
represent the effect of interventions when the DAG is interpreted
causally.  In addition, if the class of possible interventions is
suitably defined, then there is a one-to-one correspondence between
causally interpreted mDAGs and the marginal models induced by causally
interpreted DAGs.  The graphical framework also provides a platform
for studying the models themselves, which are complex objects
\citep[see, for example,][]{evans:12, shpitser:14}.  We provide some
graphical results for Markov equivalence in this context, i.e.\
criteria for when two marginal models are equal, though a complete
characterization remains an open problem.

As we shall see, marginal DAG models are relatively complex and there
is, as yet, no general parameterization or fitting algorithm available
to use with them; in contrast, explicit parametric incorporation of
latent variables makes fitting relatively straightforward.  However
the latter approach has some disadvantages: most obviously it requires
additional assumptions about the nature of the latent variables that
may be implausible or untestable; additionally, the resulting models
are typically not statistically regular \citep{drton:09a}.  In
contexts where the hidden variables represent arbitrary confounders
whose nature is unknown---such as is common in epidemiological
models---it may be preferable to use a marginal DAG model rather than
an ordinary latent variable model.  For these reasons marginal DAG
models have attracted considerable interest, as the references in the
previous paragraphs attest.

The remainder of the paper is organized as follows: in Section
\ref{sec:dags} we review directed acyclic graphs and their Markov
properties; in Section \ref{sec:latent} we consider latent variables,
and discuss existing results in this area.  Section \ref{sec:mdags}
introduces mDAGs, and shows that they are rich enough to represent the
class of models induced by margins of Bayesian networks, while Section
\ref{sec:markovprop} gives Markov properties for mDAGs.  Section
\ref{sec:markov} considers Markov equivalence, and demonstrates that
ordinary mixed graphical models cannot capture the full range of
possible models.  Section \ref{sec:causal} extends the interpretation
of these models to causal settings, and Section \ref{sec:discuss}
contains a discussion including some open problems.

\section{Directed Graphical Models} \label{sec:dags}

We begin with a review of definitions concerning directed acyclic
graphs.  We omit examples of many of these ideas because these are
well known but see, for example, \citet{richardson:02} or
\citet{pearl:09} for more detail.

\begin{dfn}
A \emph{directed graph} $\D$ is a pair $(V, \mathcal{E})$, where $V$
is a finite set of \emph{vertices} and $\mathcal{E}$ a collection of
\emph{edges}, which are ordered pairs of vertices.  If $(v,w) \in
\mathcal{E}$ we write $v \rightarrow w$.  Self-loops are not allowed:
that is $(v,v) \notin \mathcal{E}$ for any $v$.  A graph is
\emph{acyclic} if it does not contain any sequences of edges of the
form $v_1 \rightarrow \cdots \rightarrow v_k \rightarrow v_1$ with $k
> 1$.  We call such a graph a \emph{directed acyclic graph} (DAG); all
the directed graphs considered in this paper are acyclic.

A \emph{path} from $v_0$ to $v_k$ is an
alternating sequence of vertices and edges $\langle v_0, e_1, v_1
\ldots, e_{k}, v_k \rangle$, such that each edge $e_i$ is between
the vertices $v_{i-1}$ and $v_i$; no repetition of vertices (or, therefore, of
edges) is permitted.  A path may contain zero edges: i.e.\ $\langle
v_0 \rangle$ is a path from $v_0$ to itself.  $v_0$ and $v_k$ are
the \emph{endpoints} of the path, and any other vertices are
\emph{non-endpoints}.
A path is
\emph{directed} from $v_0$ to $v_k$ if it is of the form $v_0
\rightarrow v_1 \rightarrow \cdots \rightarrow v_k$.

If $v \rightarrow w$ then $v$ is a \emph{parent} of $w$, and $w$ a
\emph{child} of $v$.  The set of parents of $w$ is denoted
by $\pa_\D(w)$, and the set of children of $v$ by $\ch_\D(v)$.  If there
is a directed path from $v$ to $w$ (including the case $v=w$), we say
that $v$ is an \emph{ancestor}\footnote{Note that $w$ is always an
  ancestor of itself, which differs from the convention used by some
  authors \citep[e.g.][]{lau:96}.} of $w$.  The set of ancestors of
$w$ is denoted by $\an_\D(w)$.
We apply these definitions disjunctively to sets of vertices so that
\begin{align*}
&\pa_\D(A) = \bigcup_{a \in A} \pa_\D(a), && \an_\D(A) = \bigcup_{a \in A} \an_\D(a).
\end{align*}
A set is called \emph{ancestral} if it contains all its own ancestors:
$A = \an_\D(A)$.  

Given DAGs $\D(V, \mathcal{E})$ and $\D'(V', \mathcal{E}')$, we say
that $\D'$ is a \emph{subgraph} of $\D$, and write $\D' \subseteq \D$,
if $V' \subseteq V$ and $\mathcal{E}' \subseteq \mathcal{E}$.  The
\emph{induced subgraph} of $\D$ over $A \subseteq V$ is the DAG
$\mathcal{D}_A$ with vertices $A$ and edges
$\mathcal{E}_A = \{(v,w) \in \mathcal{E} : v,w \in A\}$; that is,
those edges with both endpoints in $A$.
\end{dfn}

A \emph{graphical model} arises when a graph is identified with
structure on a multivariate probability distribution.  With each
vertex $v$ we associate a random variable $X_v$ taking values in some
set $\X_v$; the joint distribution is over the product space 
$\X_V = \times_{v \in V} \X_v$.  
In DAGs the structure takes the form of each variable
$X_v$ `depending' only upon the random variables $X_{\pa(v)}$
corresponding to its immediate parents in the graph.  Unless
explicitly stated otherwise we make no assumption about the
state-space of each of the random variables $X_v$, save that we work with
Lebesgue-Rokhlin probability spaces.  Hence $X_v$ could be discrete,
one-dimensional real, vector-valued, or a countably generated process
such as a Brownian motion \citep[see][Section 2]{rokhlin:52}.  





\begin{dfn}[Structural Equation Property] \label{dfn:sep}
  Let $\D$ be a DAG with vertices $V$, and $\X_V$ a Cartesian product
  space.  We say that a joint distribution $P$ over $\X_V$ satisfies
  the \emph{structural equation property} (SEP) for $\D$ if for some
  independent random variables $E_v$ (the \emph{error variables})
  taking values in $\mathscr{E}_v$, and measurable functions $f_v :
  \X_{\pa(v)} \times \mathscr{E}_v \rightarrow \X_v$, recursively
  setting
\begin{align*}
X_v = f_v(X_{\pa(v)}, E_v), \qquad v \in V
\end{align*}
gives $X_V$ the joint distribution $P$.  Equivalently, each $X_v$ is
$\sigma(X_{\pa(v)}, E_v)$-measurable, where $\sigma(Y)$ denotes the
$\sigma$-algebra generated by the random variable $Y$.  We denote the
collection of such distributions (the \emph{structural equation model}
for $\D$) by $\M_{se}(\D)$.
\end{dfn}

\begin{rmk} \label{rmk:arb}
The fact that we can use this recursive definition 
follows from the fact that the graph is acyclic.

Although in principle the error variables have arbitrary state-space,
it follows from the discussion in \citet[][Section 2.11]{chentsov:82}
that there is no loss of generality if they are assumed to be
uniformly distributed on $(0,1)$.
\end{rmk}

Note that the structural equation model for $\D$ does not require that
a joint density for $X_V$ exists, and in particular allows for
degenerate relationships such as functional dependence between two
variables.  If a joint density with respect to a product measure 
does exist, then the model is
equivalent to that defined by requiring the usual factorization of the
joint density \citep{pearl:09}.

\begin{rmk}
  The \emph{potential outcomes} view of causal inference
  \citep{rubin:74} considers the random function $f_v(\cdot, E_v) :
  \X_{\pa(v)} \rightarrow \X_v$, generally denoted by $X_v(\cdot) =
  f_v(\cdot, E_v)$, as the main unit of interest.  Under our
  formulation this is almost surely measurable, and we can identify
  the pair $(f_v, E_v)$ with $X_v(\cdot)$.

  In general, some care is needed when defining random functions: one
  might na\"ively choose to set, for example, $X_v(x_{\pa(v)}) \sim
  N(0,1)$ independently for each $x_{\pa(v)} \in \X_{\pa(v)}$; however
  if the indexing set $\X_{\pa(v)}$ is continuous, then the function
  $X_v(\cdot)$ will almost surely not be Lebesgue measurable, and
  therefore $X_v(X_{\pa(v)})$ is not a random variable.
\end{rmk}

The structural equation model implies that each random variable is a
measurable function of its parents in the graph; it is therefore clear
that, conditional upon its parents, each variable is independent of
the other variables already defined.  \citet{pearl:85} introduced
`d-separation' as a method for interrogating Bayesian networks about
their conditional independence implications.  The resulting Markov
property is equivalent to the structural equation property, but it is
often easier to work with in practice.

\begin{dfn}
  Let $\pi$ be a path from $v$ to $w$, and let $a$ be a non-endpoint
  on $\pi$.  We say $a$ is a \emph{collider} on the path if the two
  edges in $\pi$ which contain $a$ are both oriented towards it:
  i.e.\ $\rightarrow a \leftarrow$.  Otherwise (i.e.\ if
  $\rightarrow a \rightarrow$; $\leftarrow a \rightarrow$; or
  $\leftarrow a \leftarrow$) we say $a$ is a \emph{non-collider}.
\end{dfn}

\begin{dfn}[d-separation]
  Let $\pi$ be a path from $a$ to $b$ in a DAG $\D$; we say that $\pi$
  is \emph{blocked} by a (possibly empty) set
  $C \subseteq V \setminus \{a,b\}$ if either (i) there is a
  non-collider on $\pi$ which is also in $C$, or (ii) there is a
  collider on the path which is \emph{not} contained in $\an_\D(C)$.

 Sets $A$ and $B$ are said to be \emph{d-separated} given $C$ if all
 paths from any $a \in A$ to any $b \in B$ are blocked by $C$.
\end{dfn}

\begin{dfn}[Global Markov Property]
Let $\D$ be a DAG and $X_V$ random variables under a joint probability measure 
$P$.  We say that $P$ obeys the \emph{global Markov property} for $\D$ if 
\begin{align*}
X_A \indep X_B \,|\, X_C \, [P]
\end{align*}
whenever $A$ and $B$ are d-separated by $C$ in $\D$.  Denote the
collection of probability measures that satisfy the global Markov
property by $\M_g(\D)$.
\end{dfn}

In fact $\M_g(\D) = \M_{se}(\D)$, so the structural equation property
and the global Markov property are equivalent \citep{lauritzen:90}.  
We use $\M(\D)$ to denote these equivalent models.

\section{Latent Variables} \label{sec:latent}

In a great many practical statistical applications it is necessary to
include unmeasured random variables in a model to correctly capture
the dependence structure among observed variables.  Consider a DAG
$\D$ with vertices $V \dot\cup U$, and suppose that $(X_V, X_U) \sim P
\in \M(\D)$ (here and throughout $\dot\cup$ represents a union of
disjoint sets).  What restrictions does this place on the
\emph{marginal distribution} of $X_V$ under $P$?  In this context we
call $V$ the \emph{observed} vertices, and $X_V$ the observed
variables; similarly $U$ (respectively $X_U$) are the
\emph{unobserved} or \emph{latent} vertices (variables).

\begin{dfn}
  Let $\D$ be a DAG with vertices $V \dot\cup U$, and $\X_V$ a
  state-space for $V$.  Define the \emph{marginal DAG model} $\M(\D,
  V)$ by the collection of probability distributions $P$ over $\X_V$
  such that there exist 
\begin{enumerate}[(i)]
\item some state-space $\X_U$ for $X_U$; and 
\item a probability measure $Q \in \M(\D)$ over $\X_V \times \X_U$;
\end{enumerate} 
and $P$ is the marginal distribution of $Q$ over $X_V$.
\end{dfn}

In other words, we need to construct $(X_U, X_V)$ with joint
distribution $Q \in \M(\D)$ such that $X_V \sim P$.  Trivially, if
$U = \emptyset$ then everything is observed and $\M(\D, V) = \M(\D)$.
The problem of interest is to characterize the set $\M(\D, V)$ in
general.

\begin{rmk}
  Note that we allow the state-space of the latent variables to be
  arbitrary in principle (though see Remark \ref{rmk:arb}) and the
  model is non-parametric.  Typical latent variable models
  either assume a fixed finite number of levels for the latents, or 
  invoke some other parametric structure such as Gaussianity.  
  Such models are useful in many contexts, but have various 
  disadvantages if the aim is to remain agnostic as to the precise 
  nature of the unobserved variables.  
  In general any latent variable model will be a sub-model of the
  marginal DAG model, and may impose additional
  constraints on the observed joint distribution
  \citep[see, for example,][]{allman:13}.  This is clearly
  undesirable if it is simply an artefact of an arbitrary and untested 
  parametric structure
  applied to unmeasured variables.  In addition, latent variable
  models are often not regular and may have poor statistical
  properties, such as non-standard asymptotics \citep{drton:09a}.
  The regularity of marginal DAG models has not been established in general,
  but is known in some special cases \citep{evans:complete}.
\end{rmk}

The following proposition shows that taking margins with respect to
ancestral sets preserves the structure of the original graph,
representing an important special case.  The result is well known, see
for example \citet{richardson:02}.

\begin{prop} \label{prop:anc}
Let $\D$ and $\D'$ be DAGs with the same vertex set $V$.
\begin{enumerate}[(a)]
\item If $A \subseteq V$ is an ancestral set in $\D$, then $\M(\D, A) = \M(\D_A)$.
\item If $\D' \subseteq \D$, then $\M(\D') \subseteq \M(\D)$.
\end{enumerate}
\end{prop}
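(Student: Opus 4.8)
The plan is to work entirely through the structural equation characterization $\M(\D) = \M_{se}(\D)$, since both claims become transparent once distributions are presented by recursive structural equations; this also sidesteps the need for any separate lemma on how d-separation behaves in induced subgraphs. The single structural fact driving part (a) is that an ancestral set is closed under taking parents: if $a \in A = \an_\D(A)$ then $\pa_\D(a) \subseteq \an_\D(a) \subseteq A$, so the induced subgraph satisfies $\pa_{\D_A}(a) = \pa_\D(a)$ for every $a \in A$.

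For the inclusion $\M(\D, A) \subseteq \M(\D_A)$ I would take $P \in \M(\D, A)$ together with a witnessing $Q \in \M_{se}(\D)$ over $\X_V$ whose $A$-margin is $P$, presented by equations $X_v = f_v(X_{\pa_\D(v)}, E_v)$ with the $E_v$ independent. Restricting attention to the indices $a \in A$, each such equation refers only to variables $X_{\pa_\D(a)}$ with $\pa_\D(a) \subseteq A$ and to the error $E_a$, and $\{E_a : a \in A\}$ is a sub-family of independent variables, hence independent. These restricted equations thus constitute a valid SEP for $\D_A$, so the $A$-margin $P$ lies in $\M_{se}(\D_A) = \M(\D_A)$. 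Conversely, for $\M(\D_A) \subseteq \M(\D, A)$ I would start from an SEP representation $X_a = f_a(X_{\pa_{\D_A}(a)}, E_a)$ of $P$ on $\D_A$ and extend it to all of $V$ by choosing arbitrary measurable equations for the remaining vertices, for instance $X_v = E_v$ for $v \in V \setminus A$ with fresh errors independent of everything else. This defines some $Q \in \M_{se}(\D)$; because $A$ is ancestral, the recursion producing $X_a$ for $a \in A$ never calls on any $X_v$ with $v \notin A$, so the $A$-margin of $Q$ is exactly $P$, giving $P \in \M(\D, A)$.

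Part (b) is then a one-line variant of the same idea. Given $P \in \M_{se}(\D')$ with equations $X_v = f_v(X_{\pa_{\D'}(v)}, E_v)$, the subgraph relation $\D' \subseteq \D$ gives $\pa_{\D'}(v) \subseteq \pa_\D(v)$, so each $f_v$ may be regarded as a function on $\X_{\pa_\D(v)} \times \mathscr{E}_v$ that simply ignores the extra parent coordinates. Since any topological order of $\D$ also orders $\D'$, the resulting recursion is well-defined and reproduces $P$, whence $P \in \M_{se}(\D) = \M(\D)$.

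There is no serious obstacle here; the only point genuinely requiring care is the self-containedness claim in the converse direction of (a)—namely that the ancestral hypothesis really does prevent the block $V \setminus A$ from influencing the $A$-margin—so I would make sure to spell out that the $A$-recursion closes up within $A$ precisely because $\pa_\D(a) \subseteq A$. Everything else is routine, modulo confirming measurability and joint independence of the errors for the extended system in the general Lebesgue–Rokhlin setting, which is immediate for the trivial extensions chosen and can be arranged on a product probability space.
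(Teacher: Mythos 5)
Your proof is correct and takes essentially the same route as the paper's: both argue directly from the structural equation characterization, with part (a) resting on the parent-closure of ancestral sets (so that the recursion for $X_A$ is self-contained and the restrictions on $X_A$ are identical in $\D$ and $\D_A$) and part (b) on the observation that a $\sigma(X_{\pa_{\D'}(w)}, E_w)$-measurable variable is automatically $\sigma(X_{\pa_{\D}(w)}, E_w)$-measurable. You simply spell out explicitly the two inclusions (restriction of the equations to $A$, and extension by trivial equations $X_v = E_v$ off $A$) that the paper's terser argument compresses into a single remark.
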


\begin{proof}
  These both follow directly from the definition of the structural
  equation property, since each variable depends only upon its
  parents.  For the first claim it is clear from the recursive form of
  the SEP that the restrictions on $X_A$ are identical for $\D$ and
  $\D_A$ if $A$ is ancestral.

  For the second claim, note that since $\pa_{\D'}(w) \subseteq
  \pa_\D(w)$, any $\sigma(X_{\pa_{\D'}(w)}, E_w)$-measurable random
  variable must also be $\sigma(X_{\pa_{\D}(w)}, E_w)$-measurable.
\end{proof}

 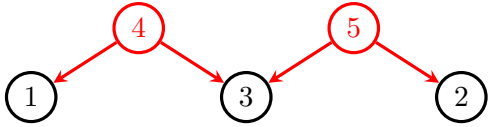
\begin{figure}
 \begin{center}
 \begin{tikzpicture}[rv/.style={circle, draw, very thick, minimum size=6.5mm, inner sep=0.5mm}, node distance=20mm, >=stealth]
 \pgfsetarrows{latex-latex};
\begin{scope}
 \node[rv] (v3) {$3$};
 \node[rv, above left of=v3, color=red, yshift=-5mm] (h1) {$4$};
 \node[rv, above right of=v3, color=red, yshift=-5mm] (h2) {$5$};
 \node[rv, below left of=h1, yshift=5mm] (v1) {$1$};
 \node[rv, below right of=h2, yshift=5mm] (v2) {$2$};
 \draw[->, very thick, color=red] (h1) -- (v1);
 \draw[->, very thick, color=red] (h1) -- (v3);
 \draw[->, very thick, color=red] (h2) -- (v2);
 \draw[->, very thick, color=red] (h2) -- (v3);
\end{scope}
  \end{tikzpicture}
 \caption{A DAG ${\mathcal{K}}$ with hidden vertices.}
\label{fig:3chain}
\end{center}
\end{figure}

\begin{exm} \label{exm:3chain}
  Consider the DAG ${\mathcal{K}}$ shown in Figure
  \ref{fig:3chain}, which contains five vertices.  We claim that
  the model defined by the margin of this graph over the vertices
  $\{1,2,3\}$ is precisely those distributions for which $X_1 \indep
  X_2$.
To see this, first note that from the global Markov property for
${\mathcal{K}}$, any distribution in $\M({\mathcal{K}},
\{1,2,3\})$ must satisfy $X_1 \indep X_2$.

Conversely, suppose that $P$ is a distribution on $(X_1,X_2,X_3)$ 
such that $X_1 \indep
X_2$; now let $(X_4, X_5, X_3) \sim P$ so that $X_4 \indep X_5$; by
Proposition \ref{prop:anc}(a) and the global Markov property 
the distribution of $(X_3,X_4,X_5)$
satisfies the Markov property for the ancestral subgraph $4
\rightarrow 3 \leftarrow 5$.  Setting $X_1 = X_4$ and $X_2 = X_5$ is
consistent with the structural equation property for
${\mathcal{K}}$, so it follows that the joint distribution of
$(X_{1}, \ldots, X_5)$ is contained in $\M({\mathcal{K}})$, and
that $(X_1, X_2, X_3) \sim P$.  Hence $P \in \M({\mathcal{K}},
\{1,2,3\})$.
\end{exm}

Even in small problems, explicitly characterizing the margin
of a DAG model can be quite tricky, as the following example shows.

\begin{exm} \label{exm:verma} Consider the DAG $\D$ in Figure
  \ref{fig:verma}, and the marginal model $\M(\D, \{1,2,3,4\})$.  By
  applying the global Markov property to $\D$, one can see that any
  joint distribution satisfies $X_1 \indep X_3 \,|\, X_2$, so this
  also holds for any marginal distribution.  It was also shown by
  \citet{robins:86} that any such distribution with a positive
  probability density must also satisfy a non-parametric constraint
  that the quantity
\begin{equation}
q(x_3, x_4) \equiv \int p_2(x_2 \,|\, x_1) \cdot p_4(x_4 \,|\, x_1, x_2, x_3) \, dx_2 \label{eqn:vermacons}
\end{equation}
is independent of $x_1$ (here $p_2$ and $p_4$ represent the relevant
conditional densities).  This does not correspond to an ordinary
conditional independence, and is known as a \emph{Verma constraint}
after \citet{verma:90} who introduced it to the computer science
literature.

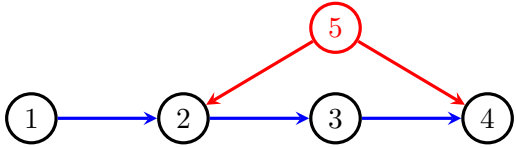
\begin{figure}
 \begin{center}
 \begin{tikzpicture}
 [rv/.style={circle, draw, very thick, minimum size=6.5mm, inner sep=0.75mm}, node distance=20mm, >=stealth]
 \pgfsetarrows{latex-latex};
 \node[rv]  (1)              {$1$};
 \node[rv, right of=1] (2) {$2$};
 \node[rv, right of=2] (3) {$3$};
 \node[rv, right of=3] (4) {$4$};
 \node at (3) [rv, color=red, yshift=12mm] (U) {$5$};
 \draw[->, very thick, color=blue] (1) -- (2);
 \draw[->, very thick, color=blue] (2) -- (3);
 \draw[->, very thick, color=blue] (3) -- (4);
 \draw[<-, very thick, color=red] (2) -- (U);
 \draw[->, very thick, color=red] (U) -- (4);
 \end{tikzpicture}
 \caption{A directed acyclic graph on five vertices.}
 \label{fig:verma}
 \end{center}
\end{figure}
\end{exm}

\subsection{Existing Results}

Margins of DAG models are of considerable interest because of their
relationship to causal models under confounding, and consequently have
been well studied.  Restricting to implications of d-separation
applied to the observed variables leads to a pure conditional
independence model; this is a super-model of the marginal DAG model
(so for Example \ref{exm:verma} we would just find
$X_1 \indep X_3 \,|\, X_2$, for instance).  This class, which we refer
to as \emph{ordinary Markov models}, was the subject of the work by
\citet{richardson:03} and \citet{evans:14} \citep[see
also][]{richardson:02}.

Constraints of the kind given in Example \ref{exm:verma} can be
generalized via the algorithm of \citet{tian:02}, and when used to
augment the ordinary Markov model yield \emph{nested Markov models}
\citep{shpitser:14}; these models are defined in Section
\ref{sec:markovprop}.  For discrete variables both ordinary and nested
Markov models are curved exponential families, and can be
parameterized and fitted using the methods of \citet{evans:10,
  evans:14}; see also \citet{shpitser:13}.  \citet{evans:complete}
shows that, up to inequality constraints, nested models are the same
as marginal DAG models when the observed variables are
discrete\footnote{In algebraic language, the marginal and nested
  models have the same Zariski closure.}: so, for example, the model
in Example \ref{exm:verma} has no equality constraints beyond the
conditional independence and (\ref{eqn:vermacons}).

In addition to conditional independences and Verma constraints,
margins also exhibit inequality constraints.  These were first
identified by \citet{bell:64}, and the earliest example in the context
of graphical models was the instrumental inequality of
\citet{pearl:95}.  \citet{evans:12} extended Pearl's work to general
DAG models and gave a graphical criterion similar to d-separation for
detecting inequality constraints.  Further inequalities are given in
\citet{fritz:12}.  \citet{bonet:01} showed that a full derivation of
inequalities in these models is likely to be very complicated in
general.  An alternative approach using information theory, also for
discrete variables, is given by \citet{chaves:14}.

A related problem to the one we consider here arises when observed and
latent variables are assumed to be jointly Gaussian.  Again one can
define an `ordinary model' using conditional independence constraints,
which is larger than the marginal model but can be smoothly
parameterized using the results in \citet{richardson:02}.  However
margins of these models also induce Verma constraints and
inequalities, as well as more exotic constraints \citep[see 8.3.1 of
][]{richardson:02}; an overview is given in \citet{drton:12}.
\citet{fox:14} characterize these models in a fairly large class of
graphs, though the general case remains an open problem.

\subsection{Reduction} \label{sec:reduce}

It might seem that to characterize general models of the form $\M(\D,
V)$ we will have to consider an infinite collection of models with
arbitrarily many latent variables, making the problem extremely hard.
However the three results in this subsection show that without any
loss of generality we can assume latent variables to be exogenous
(that is, they have no parents), and that for a fixed number of
observed variables, the number of latent variables can be limited to a
finite value.  This is in the spirit of the latent projection
operation used in \citet{pearl:09}.

\begin{dfn}
  Let $\D$ be a DAG containing a vertex $u$.  Define the
  \emph{exogenized DAG} $\mathfrak{r}(\D, u)$ as follows: take the
  vertices and edges of $\D$, and then (i) add an edge $l \rightarrow
  k$ from every $l \in \pa_\D(u)$ to $k \in \ch_\D(u)$ (if necessary),
  and (ii) delete any edge $l \rightarrow u$ for $l \in \pa_\D(u)$.
  All other edges are as in $\D$.
\end{dfn}

In other words, we join all parents of $u$ to all children of $u$ with
directed edges, and then remove edges between $u$ and its parents; the
process is most easily understood visually: see the example in Figure
\ref{fig:simp1}.  Note that if $u$ has no parents in $\D$, then
$\mathfrak{r}(\D, u) = \D$.

\begin{figure}
\begin{center}
\begin{tikzpicture}
 [rv/.style={circle, draw, thick, minimum size=6mm, inner sep=0.5mm}, node distance=14mm, >=stealth]
 \small
 \pgfsetarrows{latex-latex};
 \begin{scope}
 \node[rv] (1) {$l_1$};
 \node[rv, right of=1] (2) {$l_2$};
 \node[rv, color=red, below of=1, xshift=7mm] (U) {$u$};
 \node[rv, below of=U] (4) {$k_2$};
 \node[rv, left of=4] (3) {$k_1$};
 \node[rv, right of=4] (5) {$k_3$};
 \draw[<-, very thick, color=red] (U) -- (1);
 \draw[->, very thick, color=blue] (2) -- (5);
 \draw[<-, very thick, color=red] (U) -- (2);
 \draw[->, very thick, color=red] (U) -- (3);
 \draw[->, very thick, color=red] (U) -- (4);
 \draw[->, very thick, color=red] (U) -- (5);
 \node[below of=4, yshift=-3mm] {(a)};
  \end{scope}
 \begin{scope}[xshift=6cm]
 \node[rv] (1) {$l_1$};
 \node[rv, right of=1] (2) {$l_2$};
 \node[below of=1, xshift=7mm] (U) {};
 \node[rv, below of=U] (4) {$k_2$};
 \node[rv, left of=4] (3) {$k_1$};
 \node[rv, right of=4] (5) {$k_3$};
 \node[rv, color=red, below of=4, yshift=4mm] (W) {$u$};
 \draw[->, very thick, color=blue] (1) -- (3);
 \draw[->, very thick, color=blue] (2) -- (3);
 \draw[->, very thick, color=blue] (1) -- (4);
 \draw[->, very thick, color=blue] (2) -- (4);
 \draw[->, very thick, color=blue] (1) -- (5);
 \draw[->, very thick, color=blue] (2) -- (5);
 \draw[->, very thick, color=red] (W) -- (3);
 \draw[->, very thick, color=red] (W) -- (4);
 \draw[->, very thick, color=red] (W) -- (5);
 \node[below of=W, yshift=7mm] {(b)};
 \end{scope}
 \end{tikzpicture}
 \caption{(a) A DAG, $\D$, and (b) the exogenized version
   $\mathfrak{r}(\D, u)$.  The two DAGs induce the same marginal model
   over the vertices $\{l_1,l_2,k_1,k_2,k_3\}$.}
 \label{fig:simp1}
\end{center}
\end{figure}
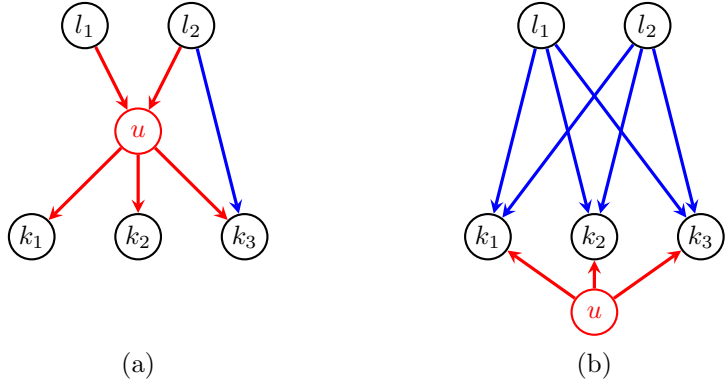

\begin{lem} \label{lem:simp1} Let $\D$ be a DAG with vertices $V \dot\cup
  \{u\}$, and $\tilde\D \equiv \mathfrak{r}(\D, u)$.  Then $\M(\D, V)
  = \M(\tilde\D, V)$; i.e.\ the marginal models induced by the two
  graphs over $V$ are the same.
\end{lem}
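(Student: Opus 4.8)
The plan is to show the two marginal models coincide by establishing set inclusion in both directions, working directly with the structural equation property (SEP) from Definition \ref{dfn:sep}. The key observation is that exogenizing $u$ changes only how the value of $X_u$ is computed and how its old parents influence its old children; it should leave the achievable marginal over $X_V$ untouched. Since $u \notin V$, we are free to redefine the error variable and structural function at $u$, as well as reroute dependence through new edges, without affecting the distribution of $X_V$.

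First I would prove $\M(\D, V) \subseteq \M(\tilde\D, V)$. Starting from structural equations $X_w = f_w(X_{\pa_\D(w)}, E_w)$ for $w \in V \cup \{u\}$ realizing a given $P \in \M(\D,V)$, I want to construct equations consistent with $\tilde\D$ giving the same marginal. The natural move is to absorb $X_u$ into its children: for each $k \in \ch_\D(u)$, the value $X_k = f_k(X_{\pa_\D(k)}, E_k)$ depends on $X_u = f_u(X_{\pa_\D(u)}, E_u)$, so by substitution I can write $X_k$ as a function of $X_{\pa_\D(k) \setminus \{u\}}$, of $X_{\pa_\D(u)}$ (the newly added parents $l \to k$), and of the combined error $(E_k, E_u)$. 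To keep the errors mutually independent as the SEP requires, I would attach a fresh independent copy of the randomness needed for $E_u$ to each child, or—more cleanly—place $E_u$ into a single child and pass its realized value along; the latter is awkward because $u$ is gone, so the cleaner route is to retain $u$ as an exogenous vertex in $\tilde\D$ carrying $E_u$ and feed it to all former children. Indeed in $\tilde\D$ the vertex $u$ still exists but is now parentless, so I can simply set $X_u = E_u$ (re-using the old error as its own value is fine since $u$ had no parents' influence to preserve) and let each child $k$ use $X_u$ together with the new parents $X_{\pa_\D(u)}$; the substitution then reproduces exactly the original functional relationships, hence the same joint law of $X_V$.

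For the reverse inclusion $\M(\tilde\D, V) \subseteq \M(\D, V)$, I would run the argument backwards: given structural equations for $\tilde\D$ realizing some $P$, I need to exhibit equations for $\D$ with the same $X_V$-margin. In $\tilde\D$ the vertex $u$ is exogenous, so $X_u = f_u(E_u)$ for some function of its error alone. In $\D$, vertex $u$ has parents $\pa_\D(u)$, but there is no obligation to use them: I can simply ignore the parents functionally and set $X_u = f_u(E_u)$ as before, which is a perfectly valid (if non-generic) structural function $f_u : \X_{\pa_\D(u)} \times \mathscr{E}_u \to \X_u$. The former children $k$ in $\D$ have $\pa_\D(k) = (\pa_{\tilde\D}(k) \setminus \pa_\D(u)) \cup \{u\}$ after removing the added edges, so I would express each $X_k$ using $X_u$ in place of the direct dependence on $X_{\pa_\D(u)}$; because $X_u$ in $\D$ carries exactly the information $E_u$ did in $\tilde\D$, and the added edges $l \to k$ in $\tilde\D$ provided $X_{\pa_\D(u)}$ directly, I must check these two information sets are interchangeable. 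This is where the main subtlety lies: in $\tilde\D$ a child may depend on both $X_{\pa_\D(u)}$ and on other variables, and I need $X_u$ in $\D$ to transmit whatever the child needs. The resolution is that the child in $\D$ already sees $X_{\pa_\D(u)}$ indirectly only through $u$, so I must let $u$ pass along $X_{\pa_\D(u)}$; but $u$'s function cannot depend on its parents if I also want it exogenous in law—except that in $\D$ it is allowed to.

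The main obstacle, and the point deserving the most care, is the bookkeeping of information flow and error-variable independence: I must ensure that in each direction the reconstructed $X_V$ is genuinely $\sigma(X_{\pa(\cdot)}, E_\cdot)$-measurable with \emph{independent} errors, and that no variable in $V$ inadvertently gains or loses a dependence. The cleanest formulation, which I would adopt to avoid measurability headaches, uses the equivalence $\M_g = \M_{se}$ together with the observation that $\tilde\D$ and $\D$ induce the same d-separations among $V \cup \{u\}$ once we treat $u$ as a latent common ancestor; however, since d-separation equivalence over the full vertex set is exactly what must be argued, I expect the direct SEP substitution above to be the more reliable path. I would therefore present the proof as two explicit constructions of structural equations, verifying in each that the joint law restricted to $X_V$ is preserved, and flag the error-independence check as the step requiring genuine attention rather than routine symbol-pushing.
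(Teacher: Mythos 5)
Your proposal is correct and follows essentially the same route as the paper's proof: in the forward direction you set $\tilde X_u := E_u$ and let each child recover $X_u$ from $(X_{\pa_\D(u)}, \tilde X_u)$, exactly as the paper does, and in the reverse direction the resolution you arrive at---letting $u$ transmit $X_{\pa_\D(u)}$ along with the noise, which is legitimate since $u$ has parents in $\D$ and its state-space is unconstrained---is precisely the paper's construction $X_u := (X_{L}, \tilde X_u)$ with $E_u := \tilde X_u$. The error-independence check you flag as delicate is in fact immediate (the new $E_u$ is a function of the original $E_u$ alone, hence independent of the other errors), so once you write the reverse-direction definition of $X_u$ explicitly rather than gesturing at it, your argument coincides with the paper's.
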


\begin{proof}
  If $u$ has no parents in $\D$ then the result is trivial, since $\D
  = \tilde\D$.  Otherwise let $L = \pa_{\D}(u)$ and $K = \ch_{\D}(u)$.
  Suppose $P \in \M(\D, V)$, so one can construct $(X_u, X_V) \sim Q
  \in \M(\D)$ such that $X_V \sim P$.  Let $Q$ be generated using the SEP by
  independent error variables $(E_v: v \in V \cup \{u\})$, so that each
  $X_v$ is $\sigma(X_{\pa_\D(v)}, E_v)$-measurable.

  Now let $\tilde{X}_u = E_u$, and all other $X_v$ remain unchanged,
  so that $\tilde{X}_u$ is $\sigma(E_u)$-measurable.  The only other
  variables whose parents sets are different in $\tilde{\D}$ are those in
  $K$, so we need only show that $X_k$ is $\sigma(\tilde{X}_u, X_L,
  X_{\pa_\D(k)}, E_k)$-measurable for $k \in K$.  Since $X_u$ is
  $\sigma(X_L, E_u) = \sigma(X_L, \tilde{X}_u)$-measurable, it follows
  that
\[
\sigma(X_u, X_{\pa_\D(k)}, E_k) \subseteq \sigma(\tilde{X}_u, X_L, X_{\pa_\D(k)}, E_k).
\]
$X_k$ is $\sigma(X_u, X_{\pa_\D(k)}, E_k)$-measurable by the
definition of $\M(\D)$, so it is also $\sigma(\tilde{X}_u, X_L,
X_{\pa_\D(k)}, E_k)$-measurable.  The joint distribution $\tilde{Q}$
of $(\tilde{X}_u, X_V)$ is therefore contained in $\M(\tilde{\D})$,
and so $P \in \M(\tilde{\D}, V)$.

Conversely, if $(\tilde{X}_u, X_V) \sim \tilde{Q} \in \M(\tilde{\D})$,
let $E_u = \tilde{X}_u$, and $X_u = (X_L, \tilde{X}_u)$; then $E_u$ is
independent of other error variables, and $X_u$ is $\sigma(X_L,
E_u)$-measurable.  For $k \in K$,
\begin{align*}
  \sigma(X_u, X_{\pa_\D(k)}, E_k) \supseteq \sigma(\tilde{X}_u, X_L,
  X_{\pa_\D(k)}, E_k),
\end{align*} 
so $(X_u, X_V) \sim Q \in \M(\D)$.
\end{proof}

As a consequence of this lemma it is sufficient to consider models in
which the unobserved vertices are exogenous.  Our second result shows
that only a finite number of exogenous latent variables are
necessary.

\begin{lem} \label{lem:simp2} 
  Let $\D$ be a DAG with vertices $V \dot\cup \{ u, w\}$ (where $u \neq
  w$), such that $\pa_\D(w) = \pa_\D(u) = \emptyset$ and $\ch_\D(w)
  \subseteq \ch_\D(u)$.  Then $\M(\D, V) = \M(\D_{-w}, V)$, where
  $\D_{-w}$ is the induced subgraph of $\D$ after removing $w$.
\end{lem}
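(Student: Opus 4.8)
The plan is to prove the two inclusions $\M(\D_{-w}, V) \subseteq \M(\D, V)$ and $\M(\D, V) \subseteq \M(\D_{-w}, V)$ separately, following the same style of $\sigma$-algebra bookkeeping used in Lemma \ref{lem:simp1}. Throughout I would write $K = \ch_\D(u)$ and $K_w = \ch_\D(w)$, so that the hypothesis reads $K_w \subseteq K$ and both $u$ and $w$ are exogenous. Every distribution I construct will be generated via the structural equation property, so in each direction the task reduces to checking the measurability conditions for exactly those vertices whose parent sets change when $w$ is added or removed.

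For the easy inclusion $\M(\D_{-w}, V) \subseteq \M(\D, V)$, I would take $P \in \M(\D_{-w}, V)$ realized by $(X_u, X_V) \sim Q' \in \M(\D_{-w})$ and simply reintroduce $w$ as a redundant independent source: set $X_w = E_w$ for a fresh error variable independent of all the others, and leave each structural equation for $k \in K_w$ unchanged (each $f_k$ just ignores its new argument $X_w$). Since $\pa_\D(v) \supseteq \pa_{\D_{-w}}(v)$ for every $v \in V$, any $X_v$ that was $\sigma(X_{\pa_{\D_{-w}}(v)}, E_v)$-measurable remains $\sigma(X_{\pa_\D(v)}, E_v)$-measurable, so the augmented law lies in $\M(\D)$ and still has margin $P$.

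The substantive inclusion is $\M(\D, V) \subseteq \M(\D_{-w}, V)$. Here I would take $(X_u, X_w, X_V) \sim Q \in \M(\D)$ with $X_V \sim P$ and merge the two exogenous latents into one by setting $\tilde X_u = (X_u, X_w)$, with combined error $\tilde E_u = (E_u, E_w)$; this is still exogenous and independent of the remaining errors. The point is to verify that each child $k \in K$ is $\sigma(\tilde X_u, X_{\pa_{\D_{-w}}(k)}, E_k)$-measurable. Deleting $w$ only removes $w$ from parent sets, so $\pa_\D(k) \setminus \{u, w\} \subseteq \pa_{\D_{-w}}(k)$; since $X_u$ and $X_w$ are both functions of $\tilde X_u$, this yields the inclusion $\sigma(X_{\pa_\D(k)}, E_k) \subseteq \sigma(\tilde X_u, X_{\pa_{\D_{-w}}(k)}, E_k)$, whence the required measurability follows from $X_k$ being $\sigma(X_{\pa_\D(k)}, E_k)$-measurable in $\D$. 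Vertices $v \in V \setminus K$ have identical parent sets in $\D$ and $\D_{-w}$ (using $K_w \subseteq K$ to conclude they are not children of $w$ either), so nothing changes for them, and $(\tilde X_u, X_V) \in \M(\D_{-w})$ with margin $P$.

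The crux—and the only place the hypothesis $\ch_\D(w) \subseteq \ch_\D(u)$ is used—is this measurability check for the children. Merging $w$ into $u$ makes $\tilde X_u$ a potential parent of every former child of $w$; this is harmless precisely because those vertices are already children of $u$ in $\D_{-w}$, so no new edges are introduced and $\D_{-w}$ really is just $\D$ with $w$ deleted. Were some child of $w$ not a child of $u$, the merge would demand an extra edge out of $u$ and hence change the target graph, so I expect this containment to be the one point that genuinely needs the hypothesis; the remaining steps are routine $\sigma$-algebra inclusions of the kind already carried out in Lemma \ref{lem:simp1}.
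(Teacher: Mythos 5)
Your proof is correct and follows essentially the same route as the paper: the key step in both is merging the two exogenous latents into $\tilde X_u = (X_u, X_w)$ and checking that the structural equation property for $\D_{-w}$ holds, with the hypothesis $\ch_\D(w) \subseteq \ch_\D(u)$ used exactly where you say. The only cosmetic difference is that the paper dispatches the easy inclusion by citing Proposition \ref{prop:anc}(b) rather than re-deriving it, and leaves the measurability bookkeeping for the children implicit, which you spell out correctly.
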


\begin{proof}
  By Proposition \ref{prop:anc}(b), $\M(\D_{-w}, V) \subseteq \M(\D,
  V)$.  Take $P \in \M(\D, V)$, so that there exists $(X_V, X_u, X_w)
  \sim Q \in \M(\D)$ whose $V$-margin is $P$.  Letting $\tilde{X}_u =
  (X_u, X_w)$ note that $(X_V, \tilde{X}_u)$ satisfies the SEP for 
$\D_{-w}$.
Hence $P \in \M(\D_{-w}, V)$.
\end{proof}

This result, combined with Lemma \ref{lem:simp1}, shows that for a
fixed set of observed variables $V$, there are only finitely many
distinct models of the form $\M(\D, V)$.  In particular, all
unobserved vertices may be assumed to be exogenous, and their child
sets to correspond to maximal sets of observed vertices.  An example
of two DAGs shown to have equal marginal models by this result is
given in Figure \ref{fig:simp2}.

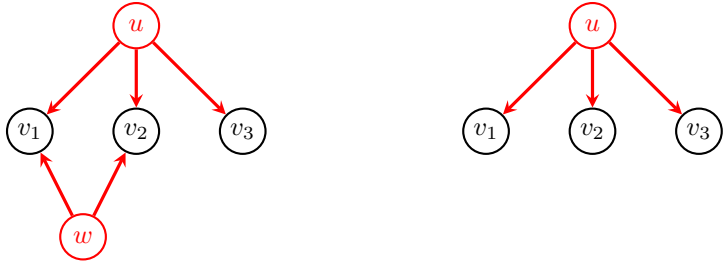
\begin{figure}
\begin{center}
\begin{tikzpicture}
 [rv/.style={circle, draw, thick, minimum size=6mm, inner sep=0.5mm}, node distance=14mm, >=stealth]
 \small
 \pgfsetarrows{latex-latex};
 \begin{scope}
 \node[rv] (1) {$v_1$};
 \node[rv, right of=1] (2) {$v_2$};
 \node[rv, right of=2] (3) {$v_3$};
 \node[rv, color=red, above of=2] (U) {$u$};
 \node[rv, color=red, below of=2, xshift=-7mm] (W) {$w$};
 \draw[->, very thick, color=red] (U) -- (1);
 \draw[->, very thick, color=red] (U) -- (2);
 \draw[->, very thick, color=red] (U) -- (3);
 \draw[->, very thick, color=red] (W) -- (1);
 \draw[->, very thick, color=red] (W) -- (2);
 \end{scope}
 \begin{scope}[xshift=60mm]
 \node[rv] (1) {$v_1$};
 \node[rv, right of=1] (2) {$v_2$};
 \node[rv, right of=2] (3) {$v_3$};
 \node[rv, color=red, above of=2] (U) {$u$};
 \draw[->, very thick, color=red] (U) -- (1);
 \draw[->, very thick, color=red] (U) -- (2);
 \draw[->, very thick, color=red] (U) -- (3);
 \end{scope}
 \end{tikzpicture}

 \caption{Two DAGs whose marginal models over the vertices $\{v_1,v_2,v_3\}$ are the same.}
  \label{fig:simp2}
  \end{center}
\end{figure}

We can make one final simplification, again without any loss of
generality.

\begin{lem} \label{lem:simp3} 
  Let $\D$ be a DAG with vertices $V \dot\cup \{u\}$, such that $u$ has no
  parents and at most one child.  Then $\M(\D, V) = \M(\D_{-u}, V)$.
\end{lem}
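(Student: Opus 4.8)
The plan is to establish the two inclusions $\M(\D_{-u}, V) \subseteq \M(\D, V)$ and $\M(\D, V) \subseteq \M(\D_{-u}, V)$ separately. The first is immediate and parallels the proof of Lemma~\ref{lem:simp2}: since $\D_{-u}$ is obtained from $\D$ by deleting the exogenous vertex $u$ together with its (at most one) outgoing edge, we may regard $\D_{-u}$ as the subgraph of $\D$ in which $u$ is left in place but isolated, so Proposition~\ref{prop:anc}(b) yields $\M(\D_{-u}, V) \subseteq \M(\D, V)$. Concretely, given a SEP realization of $P$ for $\D_{-u}$, one simply adjoins an independent error $X_u = E_u$ that appears in no other structural equation. (Recall that, since $\D_{-u}$ has vertex set exactly $V$, one has $\M(\D_{-u}, V) = \M(\D_{-u})$.)

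For the reverse and substantive inclusion I would work directly with the structural equation property. Take $P \in \M(\D, V)$ and a realization $(X_V, X_u) \sim Q \in \M(\D)$ via independent errors $(E_v : v \in V \cup \{u\})$, so that each $X_v$ is $\sigma(X_{\pa_\D(v)}, E_v)$-measurable. Because $u$ is exogenous, $X_u$ is $\sigma(E_u)$-measurable, i.e.\ a function of $E_u$ alone. If $u$ has no child then $u \notin \pa_\D(v)$ for every $v \in V$, the equations for $X_V$ coincide with those for $\D_{-u}$, and $P \in \M(\D_{-u})$ at once. Otherwise $u$ has a unique child $k \in V$ and appears in no parent set except that of $k$: for $v \in V \setminus \{k\}$ we have $\pa_\D(v) = \pa_{\D_{-u}}(v) \subseteq V$, while $X_k$ is $\sigma(X_{\pa_{\D_{-u}}(k)}, X_u, E_k)$-measurable.

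The key step is to absorb $X_u$ into the error at $k$. Setting $\tilde{E}_k = (E_k, E_u)$, measurability of $X_u$ with respect to $E_u$ yields a measurable $\tilde{f}_k$ with $X_k = \tilde{f}_k(X_{\pa_{\D_{-u}}(k)}, \tilde{E}_k)$, so that $X_k$ is $\sigma(X_{\pa_{\D_{-u}}(k)}, \tilde{E}_k)$-measurable. Leaving every other error unchanged, the family $\{\tilde{E}_k\} \cup \{E_v : v \in V \setminus \{k\}\}$ is still mutually independent, since $E_u$ was independent of all the $E_v$. This exhibits a SEP realization of the $V$-margin for $\D_{-u}$, whence $P \in \M(\D_{-u}, V)$.

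I expect the only real subtlety to be the independence of this new error family, and this is precisely where the hypothesis that $u$ has \emph{at most one} child is indispensable: if $u$ had two children $k_1, k_2$, the common term $X_u$ would have to be absorbed into both $\tilde{E}_{k_1}$ and $\tilde{E}_{k_2}$, destroying their independence and reintroducing exactly the unaccounted confounding (a bidirected edge, in ADMG terms) that is absent when $u$ feeds a single observed variable. Verifying measurability of the composite function $\tilde{f}_k$ is routine given the assumptions in Definition~\ref{dfn:sep}.
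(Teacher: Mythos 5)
Your proof is correct and follows essentially the same route as the paper's: the easy inclusion via the subgraph property, and the substantive one by absorbing $E_u$ into the unique child's error variable $\tilde{E}_k = (E_k, E_u)$, exactly as in the paper. Your version is somewhat more detailed (explicitly handling the childless case and the identification $\M(\D_{-u}, V) = \M(\D_{-u})$), but the key step is identical.
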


\begin{proof}
  $\M(\D_{-u}, V) \subseteq \M(\D, V)$, so suppose $P \in \M(\D, V)$.
  For the unique $v \in \ch_\D(u)$ (if indeed there is any such $v$),
  let $\tilde E_v = (E_v, E_u)$, so $\tilde E_v \indep (E_w : w \in
  V)$, and $X_v$ is $\sigma(X_{\pa(v)}, E_v) = \sigma(X_{\pa(v)
    \setminus u}, \tilde{E}_v)$-measurable.  Then $P \in \M(\D, V)$.
\end{proof}

The combination of these results means that we can restrict our
attention to models in which the latent variables are exogenous, and
have non-nested sets of children of size at least two.  A similar
conclusion is reached by \citet{pearl:92}, but the authors also claim
that each latent variable can be assumed to have \emph{exactly} two
children.  In the context of models of conditional independence this
is correct, but in general it is too restrictive, as we show in
Section \ref{sec:admgs}.

\section{mDAGs} \label{sec:mdags}

The results of the previous section suggest a way to construct a new
class of graph, rich enough to represent the distinct models that can
arise as the margins of DAGs.  First we define the following abstract
object, which will be used to represent latent structure.

\begin{dfn}
A \emph{simplicial complex} (or abstract simplicial complex),
$\mathcal{B}$, over a finite set $V$ is a collection of non-empty
subsets of $V$ such that
\benum[(i)]
\item $\{v\} \in \mathcal{B}$ for all $v \in V$;
\item for non-empty sets $A \subseteq B \subseteq V$ we have $B \in
  \mathcal{B} \implies A \in \mathcal{B}$.  
\eenum
The inclusion maximal elements of $\mathcal{B}$ are called
\emph{facets}.  Any simplicial complex $\mathcal{B}$ can be
characterized by its non-trivial facets (i.e.\ those of size at least
2), denoted by $\bar{\mathcal{B}}$.
\end{dfn}

\begin{dfn}
  An \emph{mDAG} (marginalized DAG) $\G$ is a triple $(V, \mathcal{E},
  \mathcal{B})$, where $(V, \mathcal{E})$ defines a DAG, and
  $\mathcal{B}$ is an abstract simplicial complex on $V$.  The
  elements of $\mathcal{B}$ are called the \emph{bidirected faces}.
\end{dfn}

DAGs correspond to mDAGs whose bidirected faces are just singleton
vertices: $\mathcal{B} = \{\{v\} : v \in V\}$.  We can represent an
mDAG as a graph with ordinary directed edges $\mathcal{E}$, and
bidirected hyper-edges corresponding to the non-trivial facets $\bar{\mathcal{B}}$.  We call $(V, \mathcal{E})$ the \emph{underlying DAG},
and draw its edges in blue; the bidirected hyper-edges are in red.
See the example in Figure \ref{fig:mdag}.  If $w$ has no parents and
$\{w\}$ is a facet of $\mathcal{B}$, we say that $w$ is
\emph{exogenous}.  Informally we may think of each facet $B$ as
representing a latent variable with children $B$.  The definitions of
parents, children, ancestors and ancestral sets are extended to mDAGs
by applying them to the underlying DAG, ignoring the bidirected faces.

Visually, there is some resemblance between the bidirected hyper-edges
in mDAGs and the factor nodes in factor graphs, but this similarity is
only superficial: for example, factor graphs do not require inclusion
maximality \citep{kschischang:01}.

If we restrict the facets of $\mathcal{B}$ to have size at most 2 (so
that $\mathcal{B}$ is an `edge complex'), then the definition of an
mDAG is isomorphic to that of an \emph{acyclic directed mixed graph}
or ADMG \citep{richardson:03}.  Clearly then, mDAGs are a richer class
of graphs: the relationship between mDAGs and ADMGs is explained further
in Section \ref{sec:admgs}.





\begin{dfn}[Subgraph]
  Let $\G(V, \mathcal{E}, \mathcal{B})$ and $\H(V', \mathcal{E}',
  \mathcal{B}')$ be mDAGs.  Say that $\H$ is a \emph{subgraph} of
  $\G$, and write $\H \subseteq \G$, if $V' \subseteq V$,
  $\mathcal{E}' \subseteq \mathcal{E}$, and $\mathcal{B}' \subseteq
  \mathcal{B}$.

  The \emph{induced subgraph} of $\G$ over $A \subseteq V$ is the mDAG
  defined by the induced underlying DAG $(A, \mathcal{E}_A)$ and
  bidirected faces $\mathcal{B}_A = \{B \subseteq A : B \in \mathcal{B}\}$.
  In other words, taking those parts of each edge which intersect with
  the vertices in $A$.
\end{dfn}

\subsection{Latent Projection}

We now relate margins of DAG to mDAGs, via an operation called latent
projection.  This is based on the approach taken by \citet{pearl:09},
but allows for joint dependence of more than two variables due to a
common `cause' or ancestor.

\begin{dfn}
  Let $\G$ be an mDAG with bidirected faces $\mathcal{B}$, and let $W,
  U$ be disjoint sets of vertices in $\G$.  We say that the vertices
  in $W$ share a \emph{hidden common cause} in $\G$, with respect to
  $U$, if there exists a set $B \in \mathcal{B}$ such that
\begin{enumerate}[(i)]
\item $B \subseteq U \dot\cup W$; and
\item for each $w \in W$ there is a directed path $\pi_w$ from some $b
  \in B$ to $w$, with all vertices on $\pi_b$ being in $U \cup \{w\}$.
\end{enumerate}
\end{dfn}

If $\G$ is a DAG, a hidden common cause is a common ancestor $a \in V$
of each $w \in W$, where $a$ and the vertices on a directed path
between $a$ and $w$ are unobserved.  Note that if $W \in \mathcal{B}$
then $W$ is trivially a hidden common cause with respect to any
$U \subseteq V \setminus W$.

The concept of a hidden common cause is similar to a system of
\emph{treks} which induce latent correlation; see, for example,
\citet{foygel:12}.  The difference is that treks only consider
pairwise dependence, not dependence between an arbitrary collection of
variables.


\begin{exm}
  Let $\G$ be the DAG in Figure \ref{fig:project}(a).  The vertices
  $W=\{3, 4, 5, 6\}$ share a hidden common cause $B=\{1\}$ with
  respect to $U=\{1,2\}$.  In the mDAG in Figure \ref{fig:project}(c)
  the set of vertices $W=\{3,4,5,6\}$ share a hidden common cause in
  the bidirected facet $\{2,3,4\}$, with respect to $\{2\}$.
\end{exm}

 \begin{figure}
 \begin{center}
 \begin{tikzpicture}[rv/.style={circle, draw, very thick, minimum size=6.5mm, inner sep=1mm}, node distance=20mm, >=stealth]
 \pgfsetarrows{latex-latex};
\begin{scope}[yshift=4cm]
 \node[rv] (U2) {1};
 \node[rv] (1) at (180:1.5) {2};
 \node[rv] (2) at (300:1.35) {3};
 \node[rv] (3) at (60:1.35) {4};
 \node[rv, left of=1, xshift=10mm, yshift=12mm] (4) {5};
 \node[rv, left of=1, xshift=10mm, yshift=-12mm] (5) {6};
 \node[rv, left of=1, xshift=0mm] (6) {7};
 \draw[->, very thick, color=blue] (1) -- (5);
 \draw[->, very thick, color=blue] (1) -- (4);
 \draw[->, very thick, color=blue] (U2) -- (3);
 \draw[->, very thick, color=blue] (U2) -- (2);
 \draw[->, very thick, color=blue] (U2) -- (1);
 \draw[->, very thick, color=blue] (6) -- (1);
\node[below of=1, yshift=0mm] {(a)};
\end{scope}
\begin{scope}[xshift=6cm, yshift=4cm]
 \node[rv] (1) {1};
 \node[rv] (2) at (315:1.35) {3};
 \node[rv] (3) at (45:1.35) {4};
 \node[rv] (4) at (135:1.35) {5};
 \node[rv] (5) at (225:1.35) {6};
 \node[rv, left of=1, xshift=0mm] (6) {7};
 \draw[<->, very thick, color=red] (4) -- (5);
 \draw[->, very thick, color=blue] (1) -- (5);
 \draw[->, very thick, color=blue] (1) -- (4);
 \draw[->, very thick, color=blue] (1) -- (3);
 \draw[->, very thick, color=blue] (1) -- (2);
 \draw[->, very thick, color=blue] (6) -- (4);
 \draw[->, very thick, color=blue] (6) -- (5);
\node[below of=1, yshift=0mm] {(b)};
\end{scope}
\begin{scope}
 \node[circle, minimum size=2mm, inner sep=0mm, fill=red] (U2) {};
 \node[rv] (1) at (180:1.35) {2};
 \node[rv] (2) at (300:1.35) {3};
 \node[rv] (3) at (60:1.35) {4};
 \node[rv, left of=1, xshift=10mm, yshift=12mm] (4) {5};
 \node[rv, left of=1, xshift=10mm, yshift=-12mm] (5) {6};
 \node[rv, left of=1, xshift=0mm] (6) {7};
 \draw[->, very thick, color=blue] (1) -- (5);
 \draw[->, very thick, color=blue] (1) -- (4);
 \draw[->, very thick, color=red] (U2) -- (3);
 \draw[->, very thick, color=red] (U2) -- (2);
 \draw[->, very thick, color=red] (U2) -- (1);
 \draw[->, very thick, color=blue] (6) -- (1);
\node[below of=1, yshift=0mm] {(c)};
\end{scope}
\begin{scope}[xshift=6cm]
 \node[circle, minimum size=2mm, inner sep=0mm, fill=red] (U2) {};
 \node[rv] (4) at (135:1.35) {5};
 \node[rv] (5) at (225:1.35) {6};
 \node[rv] (2) at (315:1.35) {3};
 \node[rv] (3) at (45:1.35) {4};
 \node[rv] (6) at (180:2) {7};
 \draw[->, very thick, color=red] (U2) -- (5);
 \draw[->, very thick, color=red] (U2) -- (4);
 \draw[->, very thick, color=red] (U2) -- (2);
 \draw[->, very thick, color=red] (U2) -- (3);
 \draw[->, very thick, color=blue] (6) -- (4);
 \draw[->, very thick, color=blue] (6) -- (5);
\node[below of=U2, yshift=0mm] {(d)};
\end{scope}
 \end{tikzpicture}
 \caption{(a) A DAG on seven vertices, and (b) its latent projection
   to an mDAG over $\{1,3,4,5,6,7\}$, (c) over $\{2,3,4,5,6,7\}$ and
   (d) over $\{3,4,5,6,7\}$.}
 \label{fig:project}
  \end{center}
\end{figure}
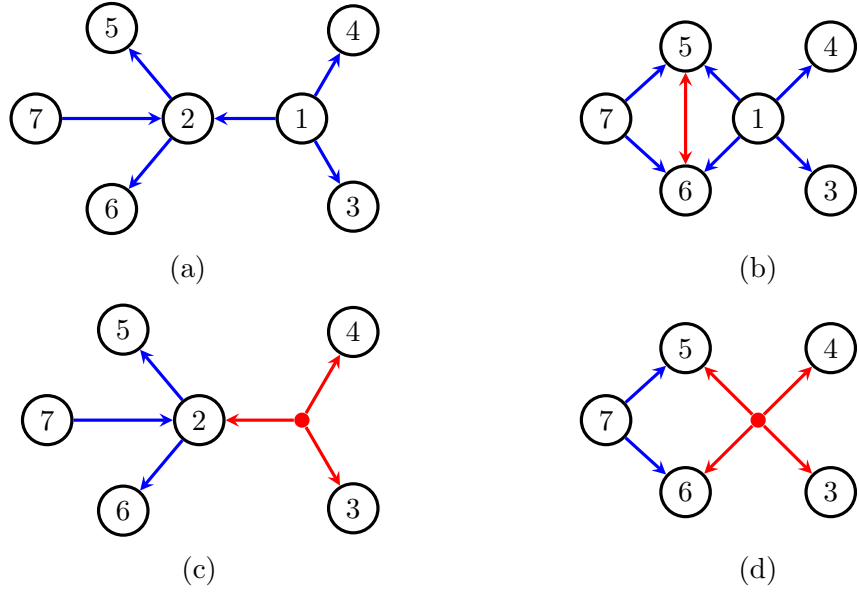

The hidden common cause forms the basis for determining which vertices
should share a bidirected face in an mDAG after projecting out some of
the variables.  We formalize this with the next definition.

\begin{dfn}
  Let $\G$ be an mDAG with vertices $V \dot\cup U$.  The \emph{latent
    projection} of $\G$ onto $V$, denoted by $\mathfrak{p}(\G, V)$, is an
  mDAG with vertices $V$, and edges $\mathcal{E}'$ and bidirected faces $\mathcal{B}'$
  defined as follows:
\begin{itemize}
\item $(a,b) \in \mathcal{E}'$ whenever $a \neq b$ and there is a
  directed path $a \rightarrow \cdots \rightarrow b$ in $\G$, with all
  non-endpoints in $U$;
\item $W \in \mathcal{B}'$ whenever the vertices $W \subseteq V$ share
  a hidden common cause in $\G$ with respect to $U$.
\end{itemize}
\end{dfn}

It is straightforward to see that $\mathcal{B}'$ is an abstract
simplicial complex, and therefore the definition above gives an mDAG.

\begin{exm}
  Consider the mDAG in Figure \ref{fig:project}(a), and its latent
  projection after projecting out the vertex $2$, shown in Figure
  \ref{fig:project}(b).  In the original graph the directed paths $7
  \rightarrow 2 \rightarrow 5$ and $7 \rightarrow 2 \rightarrow 6$ are
  manifested as the directed edges $7 \rightarrow 5$ and $7
  \rightarrow 6$ in the projection.  Additionally, there is a hidden
  common cause for the vertices $5,6$ (as noted in the previous
  example), so we end up with a bidirected facet $\{5,6\}$ in the
  projection.
  The projection of the graph in Figure \ref{fig:project}(b) onto
  $\{3,4,5,6,7\}$ is shown in (d).
\end{exm}

\begin{dfn}
Let $\G(V,\mathcal{E},\mathcal{B})$ be an mDAG with bidirected facets
$\bar{\mathcal{B}}$.  We define $\bar{\G}$, the \emph{canonical DAG}
associated with $\G$, as the DAG with vertices $V \cup
\bar{\mathcal{B}}$ and edges
\[
\mathcal{E} \cup \{B \rightarrow v \,:\, v \in B \in \bar{\mathcal{B}}\}.
\]
\end{dfn}

In other words, we replace every non-trivial facet $B \in \mathcal{B}$
with a vertex whose children are precisely the elements of $B$.  The
canonical DAG associated with the mDAG from Figure \ref{fig:mdag} is
shown in Figure \ref{fig:canon}. 

 \begin{figure}
 \begin{center}
 \begin{tikzpicture}[rv/.style={circle, draw, very thick, minimum size=6.5mm, inner sep=1mm}, node distance=25mm, >=stealth]
 \pgfsetarrows{latex-latex};
\begin{scope}[yshift=-5.5cm]
 \node[rv, color=red] (U2) {$B_2$};
 \node[rv] (2) at (150:1.7) {$c$};
 \node[rv] (4) at (270:1.7) {$e$};
 \node[rv] (3) at (30:1.7) {$d$};
 \node[rv, left of=2, xshift=-6mm] (1) {$a$};
 \node[rv, left of=2, xshift=9.5mm, color=red] (U1) {$B_1$};
 \node[rv, yshift=-17mm, color=red] (U3) at (3) {$B_3$};
 \node[rv] (6) at ($(U3)+(330:1.7)$) {$f$};
 \node[rv, left of=4, xshift=5mm] (5) {$b$};
 \draw[->, very thick, color=red] (U1) -- (2);
 \draw[->, very thick, color=red] (U1) -- (1);
 \draw[->, very thick, color=red] (U2) -- (2);
 \draw[->, very thick, color=red] (U2) -- (3);
 \draw[->, very thick, color=red] (U2) -- (4);
 \draw[->, very thick, color=red] (U3) -- (6);
 \draw[->, very thick, color=red] (U3) -- (3);
 \draw[->, very thick, color=red] (U3) -- (4);
 \draw[->, very thick, color=blue] (2) -- (4);
 \draw[->, very thick, color=blue] (5) -- (4);
 \draw[->, very thick, color=blue] (3) -- (4);
 \draw[->, very thick, color=blue] (3) -- (6);
 \draw[->, very thick, color=blue] (1) .. controls +(1.2,1.2) and +(-1.2 ,1.2) .. (3);
\end{scope}
 \end{tikzpicture}
\caption{The canonical DAG associated with the mDAG in Figure \ref{fig:mdag}.}
 \label{fig:canon}
  \end{center}
 \end{figure}
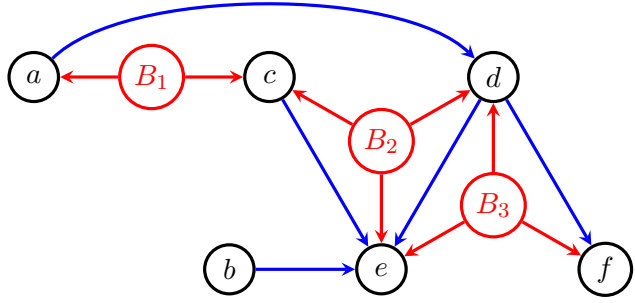

\begin{prop} \label{prop:proj}
  Let $\G$ be an mDAG with vertex set $V$.
  \begin{enumerate}[(a)]
    \item $\H \subseteq \G \implies \mathfrak{p}(\H, W) \subseteq \mathfrak{p}(\G, W)$ for any $W \subseteq V$;
    \item $\mathfrak{p}(\bar\G, V) = \G$;
    \item if $A \subseteq V$ is an ancestral set in $\G$, then $\mathfrak{p}(\G, A) = \G_A$.
  \end{enumerate}
\end{prop}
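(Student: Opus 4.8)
The plan is to establish each of the three claims by checking separately that the directed edges and the bidirected faces of the two mDAGs agree (or are nested), writing $U$ for the vertices projected away in each case. The recurring tool is the observation, built into the definition of a hidden common cause, that the source $b \in B$ of a witnessing path $\pi_w$ lies on $\pi_w$ and so must itself belong to $U \cup \{w\}$. Part (a) then needs no real work: assuming $W \subseteq V(\H)$, every directed path in $\H$ is one in $\G$ and every face of $\H$ is a face of $\G$, while the vertices deleted from $\H$ form a subset of those deleted from $\G$; hence any path or hidden common cause witnessing an edge or face of $\mathfrak{p}(\H,W)$ transfers verbatim to $\G$, giving the inclusion.

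For part (b), $U = \bar{\mathcal{B}}$ and every facet-vertex is a source of $\bar\G$. The edge claim is immediate, since a facet-vertex has no incoming edge and so cannot be interior to a directed path; thus a $V$-to-$V$ path with all interior vertices in $\bar{\mathcal{B}}$ must be a single edge, and these are precisely $\mathcal{E}$. For the faces I would show that $W$ shares a hidden common cause with respect to $\bar{\mathcal{B}}$ exactly when $W$ is a singleton or $W \subseteq B_0$ for some facet $B_0$, which is exactly the condition $W \in \mathcal{B}(\G)$: if the singleton source is a facet-vertex $B_0$, the first edge of each $\pi_w$ runs to a $V$-child of $B_0$ and must equal $w$, forcing $W \subseteq B_0$; if instead the source lies in $V$, then $b \in U \cup \{w\}$ forces it to equal $w$ for every $w$, so $W$ is a singleton.

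For part (c), $U = V \setminus A$ with $A = \an_\G(A)$. For edges, a directed path into some $b \in A$ with interior in $U$ would make its last interior vertex a parent, hence an ancestor, of $b$, placing it in $\an_\G(A) = A$ and contradicting its membership in $U$; so only single edges survive and $\mathcal{E}' = \mathcal{E}_A$. For faces, $\mathcal{B}_A \subseteq \mathcal{B}'$ is trivial by taking $B = W$ with length-zero paths. For the reverse inclusion, given a witness $B$ and paths $\pi_w$, the source $b_w \in B \subseteq U \cup W$ also lies in $U \cup \{w\}$; ancestrality forbids $b_w \in U$, since $b_w$ is an ancestor of $w \in A$, so $b_w = w$ and $\pi_w$ is trivial. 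Hence $w \in B$ for every $w \in W$, so $W \subseteq B \in \mathcal{B}(\G)$, and downward closure of the simplicial complex gives $W \in \mathcal{B}_A$.

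I expect the bidirected-face analysis in (b) and (c) to be the main obstacle, the danger being that a parent-child chain among the retained vertices could masquerade as a bidirected face. Both cases turn on the same structural point: because the source of each witnessing path must itself lie in $U \cup \{w\}$, a retained (non-latent) source is forced to coincide with $w$, collapsing the path to length zero. In (b) this removes spurious faces arising from the directed $V$-to-$V$ edges of $\bar\G$, and in (c) it is precisely ancestrality that excludes a latent source and thereby pins $W$ inside a single face $B$.
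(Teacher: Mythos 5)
Your proposal is correct and follows essentially the same route as the paper's proof: part (a) by transferring witnessing paths and faces verbatim, part (b) by noting that facet-vertices of $\bar\G$ are parentless (so they cannot be path-interior, and the only hidden common causes are singletons and subsets of facets), and part (c) by using ancestrality to force every witnessing path to collapse. The paper states these steps far more tersely; your version merely spells out the same case analysis---in particular the observation that the source $b$ of $\pi_w$ lies on $\pi_w$ and hence in $U \cup \{w\}$---in full detail.
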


\begin{proof}
  (a): If $\H$ is a subgraph of $\G$, then any
  directed path or hidden common cause in $\H$ must also be found in
  $\G$.  

  (b): Since $\bar\G$ is a DAG on vertices $V \cup \bar{\mathcal{B}}$
  and no $B \in \bar{\mathcal{B}}$ has any parents in $\bar\G$, the
  only directed edges added in $\mathfrak{p}(\bar\G, V)$ are those
  already joining elements of $V$ in $\bar\G$, and therefore are
  precisely the directed edges in $\G$.  The only hidden common causes
  with respect to $\bar{\mathcal{B}}$ are singletons $\{v\}$ and
  subsets of any $B \in \bar{\mathcal{B}}$, whose children are all
  observed.  Hence the bidirected faces in $\mathfrak{p}(\bar\G, V)$
  are precisely $\mathcal{B}$.

  (c): Since $A$ is ancestral, any directed paths between elements of
  $A$ have all vertices in $A$, and there are no directed paths from
  $V \setminus A$ to $A$ (hence there are no hidden common causes).
\end{proof}

A critical fact about latent projection is that it does not matter
in what order we project out vertices, or indeed if we do several 
at once.

\begin{thm} \label{thm:proj}
Let $\G$ be an mDAG with vertices $V \dot\cup U_1 \dot\cup U_2$.  Then
\begin{align*}
\mathfrak{p}(\G, V) = \mathfrak{p}(\mathfrak{p}(\G, V \cup U_1), V) = \mathfrak{p}(\mathfrak{p}(\G, V \cup U_2), V).
\end{align*}
That is, the order of projection does not matter.
\end{thm}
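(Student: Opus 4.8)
The plan is to prove the single equality $\mathfrak{p}(\G, V) = \mathfrak{p}(\mathfrak{p}(\G, V \cup U_1), V)$; the second equality then follows by interchanging the roles of $U_1$ and $U_2$. Write $\G_1 \equiv \mathfrak{p}(\G, V \cup U_1)$, so that $\G_1$ has vertex set $V \cup U_1$ and is obtained from $\G$ by projecting out $U_2$: a directed edge $x \rightarrow y$ is present in $\G_1$ exactly when $\G$ has a directed path from $x$ to $y$ with all non-endpoints in $U_2$, and a set $B' \subseteq V \cup U_1$ is a bidirected face of $\G_1$ exactly when $B'$ shares a hidden common cause in $\G$ with respect to $U_2$. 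Since $\mathfrak{p}(\G, V)$ and $\mathfrak{p}(\G_1, V)$ are both mDAGs on the vertex set $V$, it suffices to show that they have the same directed edges and the same bidirected faces. Throughout I would lean on the elementary fact that, because the underlying graph of an mDAG is acyclic, every directed walk is automatically a directed path (a repeat would create a directed cycle); this lets me concatenate directed paths freely and recover a genuine path with no further bookkeeping.

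For the directed edges I would argue by path contraction and expansion. Given a directed path from $a$ to $b$ in $\G$ (with $a, b \in V$) whose non-endpoints all lie in $U_1 \cup U_2$, restrict attention to the subsequence of its vertices lying in $V \cup U_1$; consecutive such vertices are separated by stretches whose interiors lie entirely in $U_2$, so each adjacent pair is joined by an edge of $\G_1$, and the resulting path in $\G_1$ has all non-endpoints in $U_1$. Hence $a \rightarrow b$ in $\mathfrak{p}(\G_1, V)$. Conversely, any $\G_1$-path from $a$ to $b$ with non-endpoints in $U_1$ can have each of its edges expanded into the corresponding $U_2$-interior $\G$-path; concatenating yields a directed walk, hence by acyclicity a directed path, from $a$ to $b$ in $\G$ with all non-endpoints in $U_1 \cup U_2$. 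The two edge sets therefore agree.

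The bidirected faces are the substantive part, and require unfolding the nested definition of the faces of $\G_1$. For the forward inclusion, suppose $W \subseteq V$ shares a hidden common cause in $\G$ with respect to $U_1 \cup U_2$, witnessed by a face $B \in \mathcal{B}(\G)$ and, for each $w \in W$, a directed path $\pi_w$ from some $b_w \in B$ to $w$ having every vertex in $U_1 \cup U_2 \cup \{w\}$. The device I would use is to split each $\pi_w$ at $c_w$, its first vertex lying in $V \cup U_1$. The prefix from $b_w$ to $c_w$ then has every vertex in $U_2 \cup \{c_w\}$, exhibiting $c_w$ as a member of the set $B' \subseteq V \cup U_1$ of vertices reachable from $B$ by $U_2$-paths; since $B$ witnesses that $B'$ shares a hidden common cause in $\G$ with respect to $U_2$, $B'$ is a face of $\G_1$. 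Meanwhile the trace of the suffix on $V \cup U_1$ is a directed $\G_1$-path from $c_w$ to $w$ with every vertex in $U_1 \cup \{w\}$. As each $c_w \in U_1 \cup \{w\}$, the set $\{c_w : w \in W\}$ is a non-empty subset of $B' \cap (U_1 \cup W)$, hence itself a face of $\G_1$ by downward closure of the simplicial complex; together with the suffix paths it witnesses $W$ sharing a hidden common cause in $\G_1$ with respect to $U_1$. For the reverse inclusion, a witnessing face $B'$ of $\G_1$ arises, by the definition of $\G_1$, from a face $B \in \mathcal{B}(\G)$ through $U_2$-paths, and each $w \in W$ is reached from some $c_w \in B'$ by a $\G_1$-path with every vertex in $U_1 \cup \{w\}$; concatenating the $U_2$-path from $B$ to $c_w$ with the expansions of the $\G_1$-edges back into $\G$ produces a directed $\G$-path from $B$ to $w$ with every vertex in $U_1 \cup U_2 \cup \{w\}$, so that $B$ itself witnesses $W$ sharing a hidden common cause in $\G$ with respect to $U_1 \cup U_2$.

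I expect the main obstacle to be the bookkeeping in the forward inclusion for faces: one must choose the split point $c_w$ correctly, verify that the resulting set lies in $U_1 \cup W$ rather than merely in $V \cup U_1$, and invoke downward closure of $\mathcal{B}(\G_1)$ to trim the full projection $B'$ down to an admissible face—the naive choice of $B'$ itself may contain observed vertices outside $W$ and so fail condition (i) of the hidden-common-cause definition. Everywhere else, acyclicity removes the usual walk-versus-path nuisance, so the remaining verifications reduce to routine membership checks.
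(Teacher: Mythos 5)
Your proposal is correct and takes essentially the same route as the paper, whose proof of Theorem \ref{thm:proj} factors through Lemma \ref{lem:proj} in the appendix: it likewise reduces to the first equality, transfers directed edges by contracting/expanding paths using acyclicity, and transfers hidden common causes by splitting each witness path at its first vertex outside $U_2$ (the paper's $u_b$ is your $c_w$), concatenating paths for the converse. The only difference is presentational: you make explicit the downward-closure step that trims the projected face to $\{c_w : w \in W\}$, which the paper's lemma asserts without comment.
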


The proof of this result is found in the Appendix.  The commutativity
is illustrated in Figure \ref{fig:project}: if we first project out
$1$ and then $2$ from the DAG (a) we obtain the mDAGs in (c) and then
(d) respectively.  If the order of projection is reversed we obtain
the mDAGs in (b) and then (d).

A second crucial fact is that if two DAGs have the same latent
projection onto a set $V$, then their marginal models over $V$ are
also the same.  To prove this we use the following two lemmas, which
show that two different DAGs result in the same mDAG if their margins
are equivalent by Lemmas \ref{lem:simp1}, \ref{lem:simp2} and
\ref{lem:simp3}.

\begin{lem} \label{lem:simpproj1} 
  Let $\D$ be a DAG with vertices $V \dot\cup \{u\}$, and
  $\mathfrak{r}(\D, u)$ the exogenized DAG for $u$.  Then
\begin{align*}
\mathfrak{p}(\D, V) = \mathfrak{p}(\mathfrak{r}(\D, u), V).
\end{align*}
\end{lem}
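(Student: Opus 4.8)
The plan is to compute both latent projections explicitly and check that their directed edges and bidirected faces coincide. Write $L = \pa_\D(u)$ and $K = \ch_\D(u)$, as in the proof of Lemma \ref{lem:simp1}. If $L = \emptyset$ then $\mathfrak{r}(\D, u) = \D$ and there is nothing to prove, so assume $L \neq \emptyset$. The key simplification is that we project out the single vertex $U = \{u\}$, and that both $\D$ and $\mathfrak{r}(\D, u)$ are DAGs, so their only bidirected faces are singletons; this sharply restricts which directed paths and hidden common causes can arise.

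First I would handle the directed edges. In $\D$, a directed path whose non-endpoints all lie in $\{u\}$ has at most one non-endpoint, so it is either a single edge $a \rightarrow b$ among vertices of $V$, or of the form $a \rightarrow u \rightarrow b$; the latter contributes precisely the edges $\{(l, k) : l \in L,\ k \in K\}$. Hence the directed edges of $\mathfrak{p}(\D, V)$ are the edges of $\D$ among $V$ together with $L \rightarrow K$. In $\mathfrak{r}(\D, u)$ the vertex $u$ has no parents, so no directed path can pass through it as a non-endpoint; thus the directed edges of $\mathfrak{p}(\mathfrak{r}(\D, u), V)$ are just the edges of $\mathfrak{r}(\D, u)$ among $V$, which are the edges of $\D$ among $V$ together with the edges $L \rightarrow K$ added by exogenization. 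The two edge sets agree.

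Next I would treat the bidirected faces. For either graph, a set $W \subseteq V$ with $|W| \geq 2$ shares a hidden common cause with respect to $\{u\}$ only if there is a singleton facet $\{b\} \subseteq \{u\} \cup W$ such that, for every $w \in W$, there is a directed path from $b$ to $w$ all of whose vertices lie in $\{u, w\}$. Applying this to two distinct elements of $W$ forces $b = u$, and each admissible path from $u$ to $w$ must then be the single edge $u \rightarrow w$, so $w \in \ch(u)$. Consequently the unique non-trivial facet produced is exactly $\ch(u)$. Since exogenization only deletes edges into $u$ and adds edges among $V$, it leaves $\ch(u) = K$ unchanged, so both projections have non-trivial facet $K$ (and both contain all the trivial singleton faces). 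Hence the bidirected faces agree, and $\mathfrak{p}(\D, V) = \mathfrak{p}(\mathfrak{r}(\D, u), V)$.

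The main obstacle—really the only point needing care—is the bidirected-face computation: one must confirm that exogenizing $u$ neither creates nor destroys any hidden common cause relative to $\{u\}$. This works precisely because hidden common causes through $\{u\}$ depend only on the \emph{children} of $u$ (the only admissible paths are the single edges $u \rightarrow w$), whereas exogenization modifies only the parent side of $u$ and therefore preserves $\ch(u)$. A minor bookkeeping point is to note separately the degenerate cases $L = \emptyset$ (the graphs coincide) and $|K| < 2$ (no non-trivial facet on either side).
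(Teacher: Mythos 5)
Your proof is correct and takes essentially the same approach as the paper's: the directed edges coincide because paths through $u$ in $\D$ correspond exactly to the edges from $\pa_\D(u)$ to $\ch_\D(u)$ added by exogenization, and the bidirected faces coincide because hidden common causes with respect to $\{u\}$ depend only on $\ch(u)$, which $\mathfrak{r}(\D,u)$ leaves unchanged. Your write-up simply makes explicit the case analysis (singleton facets forcing $b=u$, admissible paths being single edges $u \rightarrow w$, and the degenerate cases) that the paper's terser proof leaves implicit.
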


\begin{proof}
  From the definition of $\mathfrak{r}$, any directed paths passing
  through $u$ as an intermediate node $l \rightarrow u \rightarrow k$
  in $\D$ are replaced by $l \rightarrow k$ in
  $\mathfrak{r}(\D, u)$.  Hence the directed edges in both
  projections are the same.

  The only vertex being projected out is $u$ and since its child set
  is the same in both $\D$ and $\mathfrak{r}(\D, u)$, the groups of
  vertices sharing a hidden common cause with respect to $\{u\}$ will
  remain unchanged.  Hence the bidirected faces in both projections
  are the same.
\end{proof}

\begin{lem} \label{lem:simpproj2} 
  Let $\G$ be an mDAG with vertices $V \dot\cup U$, containing an
  exogenous vertex $w \in U$.  If either $|\ch_\G(w)| \leq 1$, or
  $\ch_\G(w) \subseteq \ch_\G(u)$ for some $u \in U$, then
\begin{align*}
\mathfrak{p}(\G, V) = \mathfrak{p}(\G_{-w}, V).
\end{align*}
\end{lem}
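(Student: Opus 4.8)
The plan is to prove Lemma \ref{lem:simpproj2} by a direct comparison of the directed edges and the bidirected faces produced by latent projection of $\G$ and of $\G_{-w}$ onto $V$, handling the two hypotheses on $\ch_\G(w)$ in parallel. Since $w$ is exogenous, it has no parents, so it can never appear as an \emph{intermediate} vertex on any directed path $a \rightarrow \cdots \rightarrow b$: such a path would require an edge \emph{into} $w$. Hence $w$ contributes nothing to the first (directed-edge) clause of the projection, and the directed edges of $\mathfrak{p}(\G,V)$ and $\mathfrak{p}(\G_{-w},V)$ coincide immediately. This reduces the whole lemma to showing that the bidirected faces are unchanged, i.e.\ that deleting $w$ does not alter which subsets $W \subseteq V$ share a hidden common cause with respect to $U$ (resp.\ $U \setminus \{w\}$).

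For the bidirected faces I would argue set-inclusion in both directions. Removing vertices can only remove directed paths and shrink $\mathcal{B}$, so by Proposition \ref{prop:proj}(a) applied to $\G_{-w} \subseteq \G$ we get $\mathfrak{p}(\G_{-w},V) \subseteq \mathfrak{p}(\G,V)$ for free; only the reverse inclusion needs work. So suppose $W \subseteq V$ shares a hidden common cause in $\G$ via some facet $B \in \mathcal{B}$ with $B \subseteq U \dot\cup W$ and directed paths $\pi_w$ from some $b \in B$ to each $w \in W$, with all intermediate vertices in $U$. I want to produce a hidden common cause witnessing the same $W$ in $\G_{-w}$. The point is that the witnessing facet $B$ and the paths $\pi_w$ may pass through $w$, and I must reroute or replace them using the structural hypothesis.

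There are two cases corresponding to the hypothesis. If $|\ch_\G(w)| \leq 1$: because $w$ is exogenous it has no parents, so $w$ can only appear as the \emph{source} $b$ of a path $\pi_w$, never as an intermediate vertex (again, no edge enters $w$). If $w \in B$ is the source of the witnessing paths, then every $\pi_w$ must begin $w \rightarrow \cdots$, so each $w \in W$ is a child of $w$ (as the first edge) or is reached through $\ch_\G(w)$; but $|\ch_\G(w)| \leq 1$ forces $|W| \leq 1$, and a singleton $W$ is a hidden common cause trivially (and survives in $\G_{-w}$ since singletons are always facets). If instead the source $b \neq w$, then since $w$ is never an intermediate vertex, no path $\pi_w$ uses $w$ at all, and deleting $w$ (and shrinking $B$ to $B \setminus \{w\}$, still nonempty as it contains $b$, still a facet by the simplicial-complex axiom) leaves the same witness intact. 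In the second case, $\ch_\G(w) \subseteq \ch_\G(u)$ for some $u \in U$: the only way $w$ enters a witness is again as the source $b=w$, contributing initial edges $w \rightarrow k$ with $k \in \ch_\G(w) \subseteq \ch_\G(u)$. I replace the source $w$ by $u$, prepending $u \rightarrow k$ in place of $w \rightarrow k$ for each relevant first edge, and use $B' = (B \setminus \{w\}) \cup \{u\}$ as the witnessing set; since $u \in U$ this keeps $B' \subseteq (U\setminus\{w\}) \dot\cup W$, and I should check $B'$ (or rather a facet containing it) lies in $\mathcal{B}$ — this is where care is needed, since $B \cup \{u\}$ need not be a face. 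The cleanest fix is to observe that the \emph{source} vertex $b$ of each path can be taken to be a single common element, and to re-source all paths through $u$'s subtree.

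The main obstacle I anticipate is precisely this rerouting in the second case: ensuring that after substituting $u$ for $w$ the resulting set is genuinely an element of the simplicial complex $\mathcal{B}$, rather than just a set of vertices with the right directed-path structure. The hidden-common-cause definition requires the witness $B$ to be an actual face, so I cannot freely enlarge $B$. I expect the resolution is that the inclusion-minimal witness for $W$ can always be taken to be a single source vertex together with $W$-reaching paths, so that replacing the source $w$ by $u$ either lands inside an existing facet containing $u$, or the hypothesis $\ch_\G(w) \subseteq \ch_\G(u)$ lets me reuse $u$'s own facet directly; verifying that this substitution respects axiom (ii) of the simplicial complex is the delicate bookkeeping step. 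Everything else is routine checking that directed paths and facets are preserved under deletion of an exogenous vertex.
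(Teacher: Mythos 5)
Your skeleton matches the paper's proof (the directed edges are unaffected because an exogenous $w$ has no incoming edges and so is never an intermediate vertex; one inclusion comes free from monotonicity; the work is in transferring hidden-common-cause witnesses), but the core of your bidirected-face argument has two genuine problems. First, in the case $|\ch_\G(w)| \leq 1$ your inference that ``$|\ch_\G(w)| \leq 1$ forces $|W| \leq 1$'' is simply false: if the unique child $k$ lies in $U$, all the witnessing paths can pass through $k$ and then branch, so $W$ can be arbitrarily large (e.g.\ $w \rightarrow k$, $k \rightarrow a$, $k \rightarrow b$ with $k$ latent gives $W = \{a,b\}$ witnessed by $\{w\}$). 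The correct move, which the paper makes, is to truncate each path at $k$ and use the singleton $\{k\}$ as the new hidden common cause: singletons are always faces by axiom (i) of a simplicial complex, and $k$ lies in $(U \setminus \{w\}) \cup W$ because it lies on a witnessing path.

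Second, the obstacle you flag in the case $\ch_\G(w) \subseteq \ch_\G(u)$ --- whether $(B \setminus \{w\}) \cup \{u\}$ is genuinely a face --- and which you leave as a conjecture (``I expect the resolution is \ldots'') is precisely where the paper closes the argument, using a fact you never invoke: by the paper's definition, $w$ exogenous means $\{w\}$ is a \emph{facet} of $\mathcal{B}$, i.e.\ inclusion-maximal, so the \emph{only} bidirected face containing $w$ is $\{w\}$ itself. Hence any witness $B$ either avoids $w$ entirely (and, since $w$ cannot be an intermediate vertex of any path, survives the deletion unchanged --- no shrinking of $B$ is ever needed, which also repairs your sub-case where you call $B \setminus \{w\}$ ``still a facet''), or $B = \{w\}$, in which case the replacement witness is just the singleton $\{u\}$ (or $\{k\}$), automatically a face; no bookkeeping about axiom (ii), minimal witnesses, or enlarging faces arises. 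Two small further points: when re-sourcing a path at $u$ you should take the subpath from $u$ whenever $u$ already occurs on $\pi_{w'}$, to avoid repeated vertices; and one needs $u \neq w$, as in Lemma \ref{lem:simp2}. As written, your case analysis contains a false step and leaves its hardest step unresolved, so the proof is incomplete.
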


\begin{proof}
  Since $w$ has no parents, there are no directed paths containing it
  as an intermediate vertex; hence we need only show that if some
  vertices in $V$ share a hidden common cause in $\G$ with
  respect to $U$, then they also share one in $\G_{-w}$ with respect to
  $U \setminus \{w\}$.

  Since $w$ is exogenous this is clearly true whenever the hidden
  common cause is not $\{w\}$, and so if $w$ has no children the
  result is trivial.  If $|\ch_\G(w)| = \{k\}$ then $\{k\}$ will also
  serve as a hidden common cause.

  If $\ch_\G(w) \subseteq \ch_\G(u)$ for some $u \in U$ then clearly
  any vertices which share $\{w\}$ as a hidden common cause in $\G$
  will also have $\{u\}$ as a hidden common cause in $\G$ and $\G_{-w}$.
\end{proof}

\begin{thm} \label{thm:equiv} 
  Let $\D$, $\D'$ be two DAGs whose latent
  projections onto some set $V$ are the same.  Then $\M(\D,
  V) = \M(\D', V)$.
\end{thm}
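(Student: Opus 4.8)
The plan is to reduce the statement to a canonical representative: I would show that for every DAG $\D$ on $V \dot\cup U$ the marginal model depends on $\D$ only through its latent projection, in the sharp form
\[
\M(\D, V) = \M(\bar\G, V), \qquad \G = \mathfrak{p}(\D, V),
\]
where $\bar\G$ is the canonical DAG of $\G$. Granting this, the theorem is immediate: if $\mathfrak{p}(\D, V) = \mathfrak{p}(\D', V) =: \G$, then both $\M(\D, V)$ and $\M(\D', V)$ equal $\M(\bar\G, V)$. The whole content is therefore the displayed identity, which I would establish by rewriting $\D$ through a sequence of graph operations, each chosen so that it simultaneously preserves the marginal model over $V$ (by one of Lemmas \ref{lem:simp1}, \ref{lem:simp2}, \ref{lem:simp3}) and the latent projection onto $V$ (by Lemma \ref{lem:simpproj1} or \ref{lem:simpproj2}).

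First I would exogenize every latent vertex, processing $U$ in a topological order and applying $\mathfrak{r}(\cdot, u)$ to each. Every edge introduced by exogenizing $u$ runs from a parent of $u$ to a child of $u$, hence from a lower-ordered to a higher-ordered vertex, so the order stays valid throughout and no latent already made parentless can reacquire a parent. The result is a DAG in which every latent is exogenous; since no latent then has any parent, there can be no edge between two latents, so each latent's children lie entirely in $V$. I would next delete any latent with at most one child (Lemma \ref{lem:simp3}) and, repeatedly, any latent whose child set is contained in that of another (Lemma \ref{lem:simp2}), which in particular removes duplicate child sets down to one. The surviving latents are exogenous, have children in $V$, and carry non-nested child sets of size at least two.

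The two families of lemmas must be aligned carefully across these steps, since Lemmas \ref{lem:simp1}--\ref{lem:simp3} are stated for a single latent vertex with everything else retained. To apply them I would take the retained (``observed'') set to be $V' = V \cup (U \setminus \{u\})$, all vertices except the one being modified, and then compose marginalizations---the $V$-margin of a $V'$-margin is the $V$-margin---to descend from the equality over $V'$ to one over $V$. For the projection side, Lemma \ref{lem:simpproj2} already applies directly to the multi-latent setting, whereas Lemma \ref{lem:simpproj1} only yields $\mathfrak{p}(\D, V') = \mathfrak{p}(\mathfrak{r}(\D, u), V')$; I would then invoke the commutativity of latent projection (Theorem \ref{thm:proj}) to project further onto $V$ and conclude $\mathfrak{p}(\D, V) = \mathfrak{p}(\mathfrak{r}(\D, u), V)$.

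Finally I would identify the reduced DAG with $\bar\G$. Because its latents are exogenous with observed children, its own latent projection has directed part equal to the directed edges it carries among $V$, and bidirected facets equal to the (maximal, hence non-nested) child sets of its latents. As every reduction preserved the projection, these must coincide with the directed edges of $\G$ and with $\bar{\mathcal{B}}$ respectively; so the reduced DAG has exactly one exogenous latent per non-trivial facet of $\G$, with children precisely that facet. It is therefore isomorphic to $\bar\G$ via a bijection fixing $V$, and since latent state-spaces are arbitrary such an isomorphism induces equality of the marginal models, giving $\M(\D, V) = \M(\bar\G, V)$. I expect the main obstacle to be the bookkeeping in the exogenization step---verifying that a single topological pass really does leave every latent simultaneously parentless despite the edges exogenization introduces---together with the composition arguments needed to lift the single-vertex lemmas to the full latent set $U$.
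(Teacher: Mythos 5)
Your proposal is correct and follows essentially the same route as the paper's own proof: both establish $\M(\D,V)=\M(\bar\G,V)$ by repeatedly applying Lemmas \ref{lem:simp1}--\ref{lem:simp3} in tandem with Lemmas \ref{lem:simpproj1}--\ref{lem:simpproj2} (and Theorem \ref{thm:proj}) until the DAG is reduced to the canonical DAG of its projection. Your version merely makes explicit the bookkeeping the paper leaves implicit---the topological-order pass guaranteeing termination of exogenization, the composition of margins to lift the single-vertex lemmas, and the final identification with $\bar\G$---with the one small caveat that when invoking Lemma \ref{lem:simp2} the retained set must exclude both $u$ and $w$ (not just the deleted vertex $w$), since its proof redefines $X_u$; your margin-composition argument handles this unchanged.
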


\begin{proof}
  Let $\G = \mathfrak{p}(\D, V)$ be the latent projection.  We will
  show that $\M(\D, V) = \M(\bar{\G}, V)$, and thereby prove the
  result.  Let the vertex set of $\mathcal{D}$ be $V \dot\cup U$.

  If no vertex in $U$ has any parents in $\D$, each vertex in $U$ has
  at least two children, and their child sets are never nested, then
  $\D = \bar\G$ and there is nothing to prove.  Otherwise suppose $u
  \in U$ has at least one parent.  Then $\mathfrak{r}(\D, u)$ has the
  same latent projection onto $V$ as $\D$ by Lemma
  \ref{lem:simpproj1}, and $\M(\mathfrak{r}(\D, u), V) = \M(\D, V)$ by
  Lemma \ref{lem:simp1}.  The problem reduces to $\mathfrak{r}(\D,
  u)$, and by repeated application it reduces to DAGs in which no
  vertex in $U$ has any parents.

  Similarly, if either $w \in U$ has only one child, or $\ch_\G(w)
  \subseteq \ch_\G(u)$ for some other $u \in U$, then by Lemmas
  \ref{lem:simp2} and \ref{lem:simp3} we have $\M(\D_{-w}, V) = \M(\D,
  V)$ and by Lemma \ref{lem:simpproj2} $\mathfrak{p}(\D_{-w}, V) =
  \G$, so the problem reduces to $\D_{-w}$.  It follows that we can
  reduce to the canonical DAG $\bar{\G}$, and the result is proved.
\end{proof}

This result shows that mDAGs are rich enough to fully express the
class of marginal DAG models.  In Section \ref{sec:markov} we will see
that ordinary (i.e.\ not hyper) graphs are unable to do this, and in
Section \ref{sec:causal} that mDAGs are, from a causal perspective,
the natural object to represent these models.

\section{Markov Properties} \label{sec:markovprop}

We are now in a position to define a Markov property for mDAGs that
relates to the original problem of characterizing the margins of DAG
models.

\begin{dfn}
  Say that $P$ obeys the \emph{marginal Markov property} for an mDAG
  $\G$ with vertices $V$, if it is contained within the marginal DAG
  model of the canonical DAG: $P \in \M(\bar{\G}, V)$.  We denote the
  set of such distributions (the \emph{marginal model}) by $\M_m(\G)$.
\end{dfn}

For instance, we know from Example \ref{exm:3chain} that the marginal
model for $1 \leftrightarrow 3 \leftrightarrow 2$ is the collection of
distributions under which $X_1 \indep X_2$.

It follows from Theorem \ref{thm:equiv} that the marginal model of any
DAG $\M(\G, V)$ is the same as the model obtained by applying the
marginal Markov property to its latent projection
$\mathfrak{p}(\G, V)$.  For some $W \subseteq V$ we denote the
marginal model of an mDAG with respect to $W$ as
$\M_m(\G, W) \equiv \M(\bar\G, W)$.  Note that Theorem \ref{thm:proj}
shows that this is a sensible definition.

\begin{prop} \label{prop:subgraph}
Let $\G, \H$ be mDAGs with vertex set $V$.
\begin{enumerate}[(a)]
 \item If $A$ is an ancestral set in $\G$, then $\M_m(\G_A) = \M_m(\G, A)$.
 \item If $\H \subseteq \G$, then $\M_m(\H) \subseteq \M_m(\G)$.
\end{enumerate}
\end{prop}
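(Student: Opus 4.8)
The plan is to reduce both claims about the marginal Markov model $\M_m$ to the corresponding facts about marginal DAG models via the canonical DAG construction, so that Proposition \ref{prop:anc} can be applied. The key identity we exploit is the definition $\M_m(\G, A) \equiv \M(\bar\G, A)$ together with Theorem \ref{thm:equiv}, which lets us pass freely between an mDAG's marginal model and the marginal model of \emph{any} DAG having the same latent projection onto the relevant vertex set.

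For part (a), I would first observe that since $A$ is ancestral in $\G$, Proposition \ref{prop:proj}(c) gives $\mathfrak{p}(\G, A) = \G_A$. The goal $\M_m(\G_A) = \M_m(\G, A)$ unpacks to $\M(\overline{\G_A}, A) = \M(\bar\G, A)$. The plan is to show that both sides are the $A$-margin of a single DAG model. Concretely, I would argue that $\overline{\G_A}$ and the DAG obtained from $\bar\G$ by restricting attention to $A$ and the latent (facet) vertices whose children lie in $A$ have the same latent projection onto $A$; Theorem \ref{thm:equiv} then equates their $A$-marginal models. Alternatively, and perhaps more cleanly, I would apply Proposition \ref{prop:proj}(c) to the canonical DAG directly: the set $A$ (enlarged by the appropriate facet-vertices) is ancestral in $\bar\G$, so marginalizing $\bar\G$ to $A$ first over the unobserved vertices $V \setminus A$ and then using commutativity of projection (Theorem \ref{thm:proj}) realizes $\M(\bar\G, A)$ as the marginal model of $\overline{\G_A}$.

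For part (b), I would unpack the subgraph hypothesis: $\H \subseteq \G$ means $V' \subseteq V$, $\mathcal{E}' \subseteq \mathcal{E}$, and $\mathcal{B}' \subseteq \mathcal{B}$. The natural move is to pass to canonical DAGs and check that $\bar\H \subseteq \bar\G$ as DAGs: the directed edges of $\bar\H$ are a subset of those of $\bar\G$, and each facet-vertex of $\bar\H$ (coming from $\bar{\mathcal{B}}'$) corresponds to a facet-vertex of $\bar\G$ with a subset of its children. Once $\bar\H \subseteq \bar\G$ is established (possibly after relabeling facet-vertices and noting that shrinking a latent's child set yields a subgraph up to the reductions of Section \ref{sec:reduce}), Proposition \ref{prop:anc}(b) gives $\M(\bar\H) \subseteq \M(\bar\G)$, and taking $V$-margins preserves the inclusion, yielding $\M_m(\H) = \M(\bar\H, V) \subseteq \M(\bar\G, V) = \M_m(\G)$.

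The main obstacle I anticipate is the bookkeeping around facet-vertices in the canonical DAG. When $\mathcal{B}' \subseteq \mathcal{B}$ but the facets of $\H$ are strict subsets of facets of $\G$ (i.e.\ a hyper-edge shrinks rather than disappears), $\bar\H$ is not literally a subgraph of $\bar\G$ on the nose: a facet vertex $B' \subset B$ has fewer children. The careful point is that a latent with a smaller child set induces a subgraph relationship only after invoking Lemma \ref{lem:simp2} or the reduction results, since adding a child to a latent's support enlarges the model. I would handle this by first establishing the monotonicity at the level of the structural equation property---a latent common cause over $B'$ can always be obtained from one over $B \supseteq B'$ by ignoring the extra children---so that $\M(\overline\H) \subseteq \M(\overline\G)$ holds directly, rather than by a naive edge-inclusion argument.
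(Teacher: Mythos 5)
Your proposal is correct, and for part (a) your primary route is exactly the paper's: write $\M_m(\G,A) = \M(\bar\G, A)$, use Theorem \ref{thm:equiv} to identify this with $\M_m(\mathfrak{p}(\bar\G, A))$, and then use Proposition \ref{prop:proj}(b),(c) with the commutativity of Theorem \ref{thm:proj} to get $\mathfrak{p}(\bar\G,A) = \mathfrak{p}(\G,A) = \G_A$. For part (b), however, you do something genuinely different and arguably better than the paper: the paper's proof asserts in one line that $\H \subseteq \G$ implies $\bar\H \subseteq \bar\G$ and then applies Proposition \ref{prop:anc}(b), but as you correctly flag, this inclusion is not literally true when a facet $B'$ of $\mathcal{B}'$ is a proper face of a facet $B$ of $\mathcal{B}$: the facet-vertex $B'$ of $\bar\H$ is then not even a vertex of $\bar\G$. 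Your repair---arguing monotonicity directly at the level of the structural equation property (a latent over $B' \subseteq B$ is simulated by one over $B$, e.g.\ by bundling $Y_{B'}$ as a coordinate of $Y_B$, so each $X_v$ remains measurable with respect to the larger $\sigma$-algebra generated by $X_{\pa_\G(v)}$, $E_v$ and the facet variables containing $v$), or alternatively routing through Lemma \ref{lem:simp2} via an intermediate DAG carrying exogenous latents for both facet systems---is sound, and it supplies precisely the step the paper's one-liner glosses over. What the paper's version buys is brevity; what yours buys is an argument that actually covers the facet-shrinking case.

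One caution on your ``cleaner'' alternative for part (a): ancestrality in an mDAG is defined on the underlying DAG, ignoring bidirected faces, so a facet $B$ may straddle $A$ and $V \setminus A$. Such a $B$ is a parent in $\bar\G$ of some $a \in A$, so $A$ enlarged only by the facet-vertices with $B \subseteq A$ is \emph{not} in general ancestral in $\bar\G$. Taking the full ancestral closure forces you to include straddling facets, whose child sets in the induced subgraph truncate to $B \cap A$, so the induced subgraph is not $\overline{\G_A}$ and you still need Theorem \ref{thm:equiv} to finish---at which point this route collapses back into your first one, which is the one to keep.
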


\begin{proof}
  (a) By definition $\M_m(\G, A) = \M(\bar{\G}, A) =
  \M_m(\mathfrak{p}(\bar{\G}, A))$, and from Proposition \ref{prop:proj}
  $\mathfrak{p}(\bar{\G}, A) = \G_A$.  

  (b) If $\H \subseteq \G$ then $\bar{\H} \subseteq \bar\G$, so by
  Proposition \ref{prop:anc} $\M(\bar\H) \subseteq \M(\bar\G)$.  It
  follows that $\M(\bar\H, V) \subseteq \M(\bar\G, V)$, giving the
  required result.
\end{proof}

The marginal Markov property also implies certain factorizations of
the joint density, if one exists.  To describe them, we first need to
define a special subgraph.

\begin{dfn}
  Let $\G(V, \mathcal{E}, \mathcal{B})$ be an mDAG with vertices $V$.
  Say that $C \subseteq V$ is \emph{bidirected-connected} if for every
  $v,w \in C$ there is a sequence of vertices
  $v=v_0, v_1, \ldots, v_k= w$ all in $C$ such that
  $\{v_{i-1}, v_i\} \in \mathcal{B}$ for $i=1,\ldots,k$.  A maximal
  bidirected-connected set is called a \emph{district}.

  Let $\G$ be an mDAG with district $D$.  The graph $\G[D]$ is the
  mDAG with vertices $D \cup \pa_\G(D)$, directed edges
  $D \cup \pa_\G(D)$ to $D$, and bidirected edges
  $\mathcal{B}_D = \{ B \subseteq D : B \in \mathcal{B}\}$.
\end{dfn}

In other words, $\G[D]$ is the induced sub-graph over $D$, together
with any directed edges that point into $D$ (and the associated
vertices).  As an example, for the mDAG in Figure
\ref{fig:verma:mdag}(a) has districts $\{1\}$, $\{3\}$ and $\{2,4\}$.
The subgraph corresponding to $D=\{2,4\}$ is shown in Figure
\ref{fig:verma:mdag}(b).

\begin{prop} \label{prop:nested}
Let $\G$ be an mDAG with districts $D_1, \ldots, D_k$, and suppose 
that $P$ with density $p$ obeys the marginal Markov property for $\G$.
Then 
\bal
p(x_V) = \prod_{i=1}^k q_i(x_{D_i} \,|\, x_{\pa(D_i)\setminus D_i}),
\eal
for some conditional distributions $q_i$ that obey the marginal Markov 
property with respect to $\G[D_i]$, $i=1,\ldots,k$.  
\end{prop}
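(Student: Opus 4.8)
The plan is to pass to the canonical DAG $\bar{\G}$ and to exploit the fact that its latent structure respects the district partition. I would begin with two combinatorial observations. Bidirected-connectivity is an equivalence relation, so the districts $D_1,\dots,D_k$ partition $V$. Moreover, every non-trivial facet $B$ lies inside a single district: if $v,w\in B$ then $\{v,w\}\subseteq B$, and since $\mathcal{B}$ is a simplicial complex $\{v,w\}\in\mathcal{B}$, so $v$ and $w$ are bidirected-adjacent and hence in the same district. Consequently the latent vertices of $\bar{\G}$ partition as a disjoint union of the sets $\mathcal{B}_j=\{B\in\bar{\mathcal{B}} : B\subseteq D_j\}$, and every child of a latent in $\mathcal{B}_j$ lies in $D_j$.

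Next I would realise $P$ as the $V$-margin of some $Q\in\M(\bar{\G})$ via the SEP, with an exogenous latent $X_B$ for each facet $B$ and independent errors $E_v$, so that $\pa_{\bar{\G}}(v)=\pa_\G(v)\cup\{B : v\in B\}$. Using the DAG factorization on $\bar{\G}$, and assuming for the moment that the joint density exists, integrating out the latents gives
\[
p(x_V)=\int \prod_{v\in V} p\bigl(x_v \,\big|\, x_{\pa_\G(v)},\, x_{\{B : v\in B\}}\bigr)\prod_{B\in\bar{\mathcal{B}}} p(x_B)\,dx_{\bar{\mathcal{B}}}.
\]
Because each $v$-factor depends on the latents only through $\{B : v\in B\}$, which lies in $\mathcal{B}_j$ for the district $D_j$ containing $v$, and each latent factor sits in a single $\mathcal{B}_j$, the integrand factorises across districts—the only variables shared between blocks are observed, hence not integrated. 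Grouping and integrating block by block yields $p(x_V)=\prod_{j=1}^k q_j$, where
\[
q_j=\int \prod_{v\in D_j} p\bigl(x_v \,\big|\, x_{\pa_\G(v)},\, x_{\{B : v\in B\}}\bigr)\prod_{B\in\mathcal{B}_j} p(x_B)\,dx_{\mathcal{B}_j}.
\]
I would then check that $q_j$ depends only on $x_{D_j}$ and $x_{\pa_\G(D_j)\setminus D_j}$ (every argument $x_{\pa_\G(v)}$ with $v\in D_j$ lies in one of these two sets) and that it integrates to $1$ over $x_{D_j}$: integrating the $v$-factors in reverse topological order within $D_j$ collapses each to $1$, leaving $\int\prod_{B\in\mathcal{B}_j}p(x_B)\,dx_{\mathcal{B}_j}=1$. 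Hence each $q_j$ is a genuine conditional density $q_j(x_{D_j}\mid x_{\pa(D_j)\setminus D_j})$.

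Finally I would identify each $q_j$ with a conditional marginal model. The displayed integral for $q_j$ is exactly the $D_j$-margin generated by the SEP on the canonical DAG of $\G[D_j]$: its latents are $\mathcal{B}_j$, its structural functions are the $f_v$ for $v\in D_j$, and the external parents $\pa_\G(D_j)\setminus D_j$ enter as fixed exogenous inputs. Since $\overline{\G[D_j]}$ is precisely this canonical DAG, $q_j$ obeys the marginal Markov property for $\G[D_j]$, completing the proof.

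The main obstacle is measure-theoretic rather than combinatorial. The SEP permits deterministic functions $f_v$, so the joint law of $(X_V,X_{\bar{\mathcal{B}}})$ need not admit a density with respect to a product measure, and the displayed integrals are then not literally meaningful. To make the argument rigorous I would carry the whole factorization at the level of regular conditional distributions, which exist because we work on Lebesgue--Rokhlin spaces, and invoke the hypothesis that $P$ has a density only at the end, to pass from the block kernels to the densities $q_j$; equivalently, one may define $q_j$ directly from the chain-rule conditionals $\prod_{v\in D_j}p(x_v\mid x_{\pre(v)})$ of $p$ itself, in which case $p=\prod_j q_j$ is immediate and the genuine work is the reduction of each conditioning set from $\pre(v)$ down to $D_j\cup\pa(D_j)$, which is exactly what the global Markov property for $\bar{\G}$ supplies. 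A secondary point requiring care is the precise meaning of ``obeys the marginal Markov property'' for the conditional object $q_j$, with the external parents playing the role of a fixed context.
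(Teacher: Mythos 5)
Your proof is correct and is essentially the argument the paper intends: the paper in fact omits the proof of Proposition \ref{prop:nested}, deferring to \citet{shpitser:14}, and your derivation---every non-trivial facet lies in a single district by downward closure of the simplicial complex, so the latent vertices of $\bar{\G}$ partition across districts, and integrating them out of the canonical-DAG factorization splits the joint block-by-block, each block being precisely the SEP margin for $\overline{\G[D_i]}$ with the external parents entering as fixed context---is the standard district-factorization argument from that line of work (going back to Tian and Pearl). You also correctly identify and handle the two genuine subtleties the paper sidesteps: the joint law of $(X_V, X_{\bar{\mathcal{B}}})$ need not admit a density even when $p$ exists, which your kernel-level (or chain-rule, with the conditioning-set reduction supplied by d-separation in $\bar{\G}$) fallback addresses, and the formal meaning of the marginal Markov property for a conditional kernel, which the paper itself explicitly declines to spell out.
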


The proof of this is omitted but see \citet{shpitser:14}, which
includes various examples. $q_i$ is a conditional distribution, but
can be renormalized as a joint density over $D_i \cup \pa_{\G}(D_i)$.
The notion of conditional distributions in graphical models is dealt
with in \citet{shpitser:14} by having two types of vertex, separately
representing the random and conditioned variables; we have omitted
these details for the sake of brevity.

\subsection{Weaker Markov Properties}

The marginal model precisely answers our original question: what
collections of distributions can be induced as the margin of a DAG
model?  However, because the definition is rather indirect, it is
generally difficult to characterize the set $\M_m(\G)$, and we may be
unable to tell whether or not a particular distribution lies in it or
not.  This complexity is one of the motivations behind the ordinary
and nested Markov properties of \citet{richardson:03} and
\citet{shpitser:14} respectively.  Both properties follow from
treating the ancestrality in Proposition \ref{prop:subgraph}(b) and
the factorization in Proposition \ref{prop:nested} as axiomatic.  In
order to do so, we assume the existence of a joint density with
respect to a product measure on $\X_V$.

\begin{dfn}
Let $\G$ be an mDAG with vertices $V$, and $P$ a probability distribution
over $\X_V$ with density $p$.  Say that $P$ obeys the 
\emph{nested Markov property} with respect to $P$ if either $|V|=1$, or both:
\begin{enumerate}[1.]
\item for every ancestral set $A \subseteq V$, the margin of $P$ over $X_A$ obeys the nested Markov property for $\G_A$; and
\item if $\G$ has districts $D_1, \ldots, D_k$ then
  $p(x_V) = \prod_{i=1}^k q_i(x_{D_i} \,|\, x_{\pa(D_i)\setminus D_i})$, where each
  $q_i$ obeys the nested Markov property for $\G[D_i]$.
\end{enumerate}
\end{dfn}

We denote the resulting models by $\M_n(\G)$.  The nested model
`throws away' the inequality constraints of the marginal model, but
for discrete variables is known to give models of the same dimension
\citep{evans:complete}, and it has the advantage of a fairly explicit
characterization.  Various equivalent formulations to the one above
are given in \citet{shpitser:14}.

The ordinary model can be defined in the same way as the nested model, 
but replacing 2 with the weaker condition:
\begin{enumerate}[1'.]
\addtocounter{enumi}{1}
\item if $\G$ has districts $D_1, \ldots, D_k$ then
  $p(x_V) = \prod_{i=1}^k q_i(x_{D_i} \,|\, x_{\pa(D_i)\setminus D_i})$ for some
  conditional densities $q_i$.
\end{enumerate}
Crucially, no further structure is imposed upon the pieces $q_i$, so
the definition does not recurse.  From their definitions and
Proposition \ref{prop:nested} it is clear that the models obey the
inclusion $\M_m(\G) \subseteq \M_n(\G) \subseteq \M_o(\G)$: the next
example show that these inclusions are strict in general.

\begin{exm}
  Consider again the graph in Figure \ref{fig:verma}; its latent
  projection over the vertices $\{1,2,3,4\}$ is shown in Figure
  \ref{fig:verma:mdag}(a): call this projection $\G$.  Applying the
  ancestrality property we see that, under the ordinary Markov
  property the margin over $(X_1, X_2, X_3)$ satisfies the global
  Markov property for the DAG $1 \rightarrow 2 \rightarrow 3$, so
  $X_1 \indep X_3 \,|\, X_2$.

  If we factorize into districts we find 
  \bal
  p(x_1, x_2, x_3, x_4) = q_{1}(x_1) \cdot q_{3}(x_3 \,|\, x_1, x_2) \cdot q_{24}(x_2, x_4 \,|\, x_1, x_3),
  \eal
  which is a vacuous requirement under the ordinary Markov property,
  and indeed there are no further constraints.  However, the nested
  property additionally requires that $q_{24}$ obeys the nested
  property for the mDAG in Figure \ref{fig:verma:mdag}(b).  Under this
  graph we see that $X_4 \indep X_1 \,|\, X_3$, and this gives the
  constraint (\ref{eqn:vermacons}); hence
  $\M_n(\G) \subset \M_o(\G)$.
  
\begin{figure}
 \begin{center}
 \begin{tikzpicture}
 [rv/.style={circle, draw, very thick, minimum size=6.5mm, inner sep=0.75mm}, node distance=20mm, >=stealth]
 \pgfsetarrows{latex-latex};
\begin{scope}
 \node[rv]  (1)              {$1$};
 \node[rv, right of=1] (2) {$2$};
 \node[rv, right of=2] (3) {$3$};
 \node[rv, right of=3] (4) {$4$};
 \draw[->, very thick, color=blue] (1) -- (2);
 \draw[->, very thick, color=blue] (2) -- (3);
 \draw[->, very thick, color=blue] (3) -- (4);
 \draw[<->, very thick, color=red] (2.45) .. controls +(1,1) and +(-1,1) .. (4.135);
\node[left of=1] {(a)};
\end{scope}
\begin{scope}[yshift=-3cm]
 \node[rv]  (1)              {$1$};
 \node[rv, right of=1] (2) {$2$};
 \node[rv, right of=2] (3) {$3$};
 \node[rv, right of=3] (4) {$4$};
 \draw[->, very thick, color=blue] (1) -- (2);
 \draw[->, very thick, color=blue] (3) -- (4);
 \draw[<->, very thick, color=red] (2.45) .. controls +(1,1) and +(-1,1) .. (4.135);
\node[left of=1] {(b)};
\end{scope}
 \end{tikzpicture}
 \caption{(a) An mDAG $\G$ representing the DAG in Figure \ref{fig:verma}, with
   the vertex 5 treated as unobserved.  (b) The subgraph $\G[\{2,4\}]$.}
 \label{fig:verma:mdag}
 \end{center}
\end{figure}
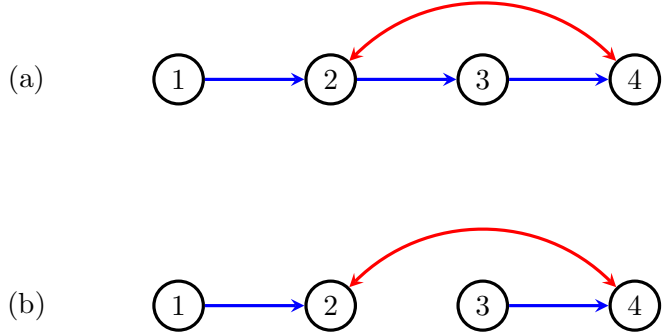

If $X_2$ and $X_4$ are discrete, then the marginal Markov property
induces an extra inequality constraint known as Bell's inequality
\citep{bell:64, gill:14}; hence $\M_m(\G) \subset \M_n(\G)$.
\end{exm}

\section{Markov Equivalent Graphs} \label{sec:markov}

A natural question to ask when two different graphs lead to the same
model under a particular Markov property.  That is, what is the
equivalence class determined by $\G \sim \G'$ whenever
$\M_m(\G) = \M_m(\G')$?  Without further assumptions such as a causal
ordering, graphs that are Markov equivalent are indistinguishable; any
model search procedure over the class of mDAG models should therefore
report the equivalence class rather than a single graph.  In addition,
because the marginal Markov property is difficult to characterize
explicitly, it can be helpful to reduce a problem down to a simpler
graph (see Example \ref{exm:equiv}).

For the ordinary Markov property there is a relatively simple
criterion for determining whether two graphs are equivalent
\citep{richardson:03}; for the nested Markov model, on the other hand,
equivalence is an open problem.  This section provides partial results
towards a characterization in the case of the marginal model.  We
conjecture that if two graphs are equivalent under the marginal
property then they are also equivalent under the nested property.  The
results of \citet{evans:complete} show that this holds for discrete
variables, but the general case is still open.

Our first substantive equivalence result generalizes an idea for
instrumental variables.

\begin{prop} \label{prop:equiv} 
Let $\G$ be an mDAG containing a bidirected facet $B = C \dot\cup D$
such that:
\begin{enumerate}[(i)]
\item every bidirected face containing any $c \in C$ is a subset of $B$;
  and
\item $\pa_\G(d) \supseteq \pa_\G(C)$ for each $d \in
  D$.
\end{enumerate} 
Let $\H$ be the mDAG defined from $\G$ by removing the facet $B$ and
replacing it with $C$ and $D$, and adding edges $c \rightarrow d$ for
each $c \in C$ and $d \in D$ (where such an edge is not already
present).

Then $\M_m(\G) = \M_m(\H)$.
\end{prop}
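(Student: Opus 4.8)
The plan is to work with the canonical DAGs $\bar\G$ and $\bar\H$ and prove the two inclusions $\M(\bar\H,V)\subseteq\M(\bar\G,V)$ and $\M(\bar\G,V)\subseteq\M(\bar\H,V)$ directly via the structural equation property, since $\M_m(\G)=\M(\bar\G,V)$ by definition. Note that Theorem~\ref{thm:equiv} does \emph{not} apply here: the latent projections $\mathfrak{p}(\bar\G,V)=\G$ and $\mathfrak{p}(\bar\H,V)=\H$ differ, so this is a genuine equivalence result rather than a statement about equal projections. In $\bar\G$ the facet $B$ becomes a single exogenous latent $U_B$ with $\ch(U_B)=C\cup D$; condition~(i) guarantees that $U_B$ is the \emph{only} latent adjacent to any $c\in C$, and that no facet other than $B$ meets both $C$ and $D$. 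In $\bar\H$ the facet is split into independent exogenous latents $U_C,U_D$ with children $C$, $D$ respectively, and the edges $c\to d$ are added, so that $\pa_{\bar\H}(d)\supseteq\pa_\G(d)\cup C$. Condition~(ii), $\pa_\G(d)\supseteq\pa_\G(C)$, is exactly what will allow each $d\in D$ to `see' the generating context of $C$.

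For $\M(\bar\H,V)\subseteq\M(\bar\G,V)$ I would take a distribution realised by the SEP for $\bar\H$, with $X_c=g_c(X_{\pa_\G(c)},U_C,E_c')$ and $X_d=g_d(X_{\pa_\G(d)},X_C,U_D,E_d')$, and collapse the two latents into one by setting $U_B:=(U_C,U_D,(E_c')_{c\in C})$. The equations for $C$ carry over unchanged as functions of $(X_{\pa_\G(c)},U_B)$; for each $d\in D$ one first rebuilds $X_c=g_c(X_{\pa_\G(c)},U_C,E_c')$ from $U_B$ and $X_{\pa_\G(c)}\subseteq X_{\pa_\G(C)}\subseteq X_{\pa_\G(d)}$ (using~(ii)), and then applies $g_d$ with $E_d:=E_d'$. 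The variables $U_B$, $(E_d)_{d\in D}$ and all remaining error terms are built from disjoint independent sources, so the result satisfies the SEP for $\bar\G$ while leaving every $X_v$, $v\in V$, literally unchanged; hence the $V$-margin lies in $\M(\bar\G,V)$. Here condition~(i) is used to ensure that $C$'s equations involve no latent other than $U_B$.

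The substantive direction is $\M(\bar\G,V)\subseteq\M(\bar\H,V)$. Starting from the SEP for $\bar\G$, with $X_c=f_c(X_{\pa_\G(c)},U_B,E_c)$ and $X_d=f_d(X_{\pa_\G(d)},U_B,\ldots,E_d)$, I set $U_C:=(U_B,(E_c)_{c\in C})$, which reproduces $X_C$ exactly. The obstacle is the $D$-block: the equations for $D$ genuinely depend on $U_B$, but in $\bar\H$ the latent $U_D$ must be independent of $U_C$, so $U_B$ cannot be copied across without destroying the dependence between $X_C$ and $X_D$. The key idea is to regenerate $U_B$'s effect on $D$ by \emph{posterior resampling}. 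Conditional on $U_B$ and the parents, $X_C$ and $X_D$ are independent, since they are driven by the disjoint noises $(E_c)$ and $(E_d)$; so, writing $x_{\pa}$ for the relevant parent values, the chain rule gives that the law of $X_D$ given $(X_C,x_{\pa})$ equals $\int P(d u_B\mid X_C,x_{\pa_\G(C)})\prod_{d}P(x_d\mid x_{\pa_\G(d)},u_B)$. I therefore let $U_D$ be a fresh independent uniform seed, use it together with the observed $X_C$ and $X_{\pa_\G(C)}$ (available to every $d$ by~(ii)) to draw a \emph{single} $\hat U_B$ from the conditional law $P(\,\cdot\mid X_C,X_{\pa_\G(C)})$, and set $X_d:=f_d(X_{\pa_\G(d)},\hat U_B,\ldots,E_d')$. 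Because the seed $U_D$ is shared, all $d\in D$ receive the same $\hat U_B$, which restores the within-$D$ dependence, while conditioning on $X_C$ restores the $C$--$D$ dependence. Existence of a measurable sampler $\hat U_B=\rho(X_C,X_{\pa_\G(C)},U_D)$ follows from the Lebesgue--Rokhlin assumption (Remark~\ref{rmk:arb}). Condition~(i) ensures that any facet other than $B$ meeting $D$ is disjoint from $C$, so those latents are unaffected by the resampling and may be retained verbatim; the joint law of $(X_C,X_D)$ is thereby matched, and the $V$-margin lies in $\M(\bar\H,V)$.

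The main obstacle is precisely this resampling step: one must verify that replacing the shared confounder $U_B$ by a value drawn from its posterior given $X_C$ yields the correct joint distribution over $C\cup D$, and that a single shared draw reproduces the dependence among the $X_d$. Conditions~(i) and~(ii) enter exactly here — (ii) makes $X_{\pa_\G(C)}$ visible to each $d$ so the posterior can be computed locally, and (i) confines $C$'s latent structure to $B$. A minor point to check throughout is that $\H$ remains acyclic, i.e.\ that $C$ precedes $D$ in $\bar\H$, which is implicit in $\H$ being a well-defined mDAG.
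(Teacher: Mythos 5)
Your proposal is correct and is essentially the paper's own argument: the paper proves the proposition via Lemma \ref{lem:splitnode}, which splits the latent $Y_B$ into independent pieces $(Y_C, Y_D)$ using the Lebesgue--Rokhlin noise-outsourcing device of Lemma \ref{lem:split}, and your posterior-resampling construction $X_d = f_d\bigl(X_{\pa_\G(d)}, \rho(X_C, X_{\pa_\G(C)}, U_D), E_d'\bigr)$ is precisely the measurable function $h(X,Y,W)$ built there (with $W = (U_D, (E_d')_{d \in D})$), while your merging direction is the paper's implicit easy inclusion. Your account of where the hypotheses enter---(i) confining $C$'s latent structure to $B$ so that conditioning on $(X_C, X_{\pa_\G(C)})$ screens off $U_B$ from the rest of the system, and (ii) making $X_{\pa_\G(C)}$ available to each $d \in D$---matches the paper's use of these conditions in Lemma \ref{lem:splitnode}(ii)--(iii).
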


\begin{proof}
  The result follows from Lemma \ref{lem:splitnode} in the appendix,
  which shows that under these circumstances we can split the latent
  variable corresponding to $B$ into two independent pieces.
\end{proof}

\begin{exm}
  Consider the mDAG in Figure \ref{fig:reduce}(a).  We can apply the
  Proposition with $C = \{a,b\}$ and $D = \{c,d\}$ to see that it is
  Markov equivalent to the graph in Figure \ref{fig:reduce}(b).  The
  advantage of such a reduction is that it moves the graph `closer' to
  something which looks like a DAG, having smaller bidirected facets.
  This makes it clearer how the joint distribution factorizes.

 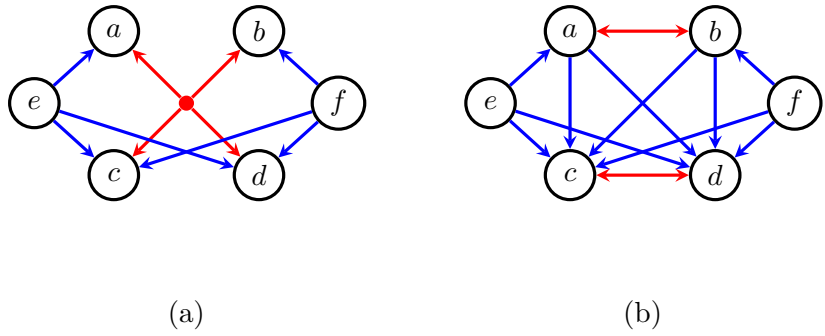
\begin{figure}
 \begin{center}
 \begin{tikzpicture}[rv/.style={circle, draw, very thick, minimum size=6.5mm, inner sep=1mm}, node distance=20mm, >=stealth]
 \pgfsetarrows{latex-latex};
\begin{scope}
 \node[circle, minimum size=2mm, inner sep=0mm, fill=red] (U2) {};
 \node[rv] (1) at (135:1.35) {$a$};
 \node[rv] (2) at (225:1.35) {$c$};
 \node[rv] (4) at (315:1.35) {$d$};
 \node[rv] (3) at (45:1.35) {$b$};
 \node[rv] (5) at (180:2) {$e$};
 \node[rv] (6) at (00:2) {$f$};
 \draw[->, very thick, color=red] (U2) -- (1);
 \draw[->, very thick, color=red] (U2) -- (2);
 \draw[->, very thick, color=red] (U2) -- (3);
 \draw[->, very thick, color=red] (U2) -- (4);
 \draw[->, very thick, color=blue] (6) -- (2);
 \draw[->, very thick, color=blue] (6) -- (3);
 \draw[->, very thick, color=blue] (6) -- (4);
 \draw[->, very thick, color=blue] (5) -- (1);
 \draw[->, very thick, color=blue] (5) -- (2);
 \draw[->, very thick, color=blue] (5) -- (4);
\node[below of=U2, yshift=-8mm] {(a)};
\end{scope}
\begin{scope}[xshift=6cm]
 \node[minimum size=0mm, inner sep=0mm] (U2) {};
 \node[rv] (1) at (135:1.35) {$a$};
 \node[rv] (2) at (225:1.35) {$c$};
 \node[rv] (4) at (315:1.35) {$d$};
 \node[rv] (3) at (45:1.35) {$b$};
 \node[rv] (5) at (180:2) {$e$};
 \node[rv] (6) at (0:2) {$f$};
 \draw[<->, very thick, color=red] (1) -- (3);
 \draw[<->, very thick, color=red] (2) -- (4);
 \draw[->, very thick, color=blue] (1) -- (2);
 \draw[->, very thick, color=blue] (1) -- (4);
 \draw[->, very thick, color=blue] (3) -- (2);
 \draw[->, very thick, color=blue] (3) -- (4);
 \draw[->, very thick, color=blue] (6) -- (2);
 \draw[->, very thick, color=blue] (6) -- (3);
 \draw[->, very thick, color=blue] (6) -- (4);
 \draw[->, very thick, color=blue] (5) -- (1);
 \draw[->, very thick, color=blue] (5) -- (2);
 \draw[->, very thick, color=blue] (5) -- (4);
\node[below of=U2, yshift=-8mm] {(b)};
\end{scope}
 \end{tikzpicture}
 \caption{Two mDAGs shown to be Markov equivalent by application of Proposition \ref{prop:equiv}} 
\label{fig:reduce}
  \end{center}
\end{figure}
\end{exm}

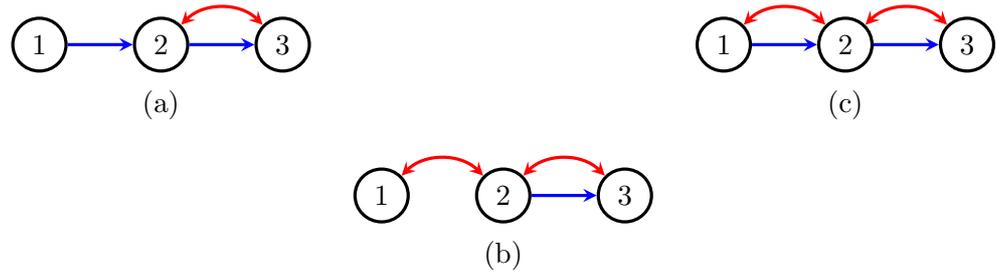
\begin{figure}
\begin{center}
\begin{tikzpicture}
 [rv/.style={circle, draw, very thick, minimum size=7mm, inner sep=0.75mm}, node distance=16mm, >=stealth]
 \pgfsetarrows{latex-latex};
\begin{scope}
 \node[minimum size=0mm, inner sep=0mm] (U) {};
 \node[rv] (1)  {$1$};
 \node[rv, right of=1] (2)  {$2$};
 \node[rv, right of=2] (3)  {$3$};
 \draw[->, very thick, color=blue] (1) -- (2);
 \draw[->, very thick, color=blue] (2) -- (3);
 \draw[<->, very thick, color=red] (2.45) .. controls +(0.3,0.3) and +(-0.3,0.3) .. (3.135);
 \node[below of=2, yshift=8mm] {(a)};
\end{scope}
\begin{scope}[xshift=4.5cm, yshift=-2cm]
 \node[minimum size=0mm, inner sep=0mm] (U) {};
 \node[rv] (1)  {$1$};
 \node[rv, right of=1] (2)  {$2$};
 \node[rv, right of=2] (3)  {$3$};
 \draw[->, very thick, color=blue] (2) -- (3);
 \draw[<->, very thick, color=red] (1.45) .. controls +(0.3,0.3) and +(-0.3,0.3) .. (2.135);
 \draw[<->, very thick, color=red] (2.45) .. controls +(0.3,0.3) and +(-0.3,0.3) .. (3.135);
 \node[below of=2, yshift=8mm] {(b)};
\end{scope}
\begin{scope}[xshift=9cm]
 \node[minimum size=0mm, inner sep=0mm] (U) {};
 \node[rv] (1)  {$1$};
 \node[rv, right of=1] (2)  {$2$};
 \node[rv, right of=2] (3)  {$3$};
 \draw[->, very thick, color=blue] (1) -- (2);
 \draw[->, very thick, color=blue] (2) -- (3);
 \draw[<->, very thick, color=red] (1.45) .. controls +(0.3,0.3) and +(-0.3,0.3) .. (2.135);
 \draw[<->, very thick, color=red] (2.45) .. controls +(0.3,0.3) and +(-0.3,0.3) .. (3.135);
 \node[below of=2, yshift=8mm] {(c)};
\end{scope}
 \end{tikzpicture}
\caption{Three Markov equivalent graphs representing the instrumental variables model.}
\label{fig:inst2}
\end{center}
\end{figure}

\begin{exm} \label{exm:iv} The canonical example to which Proposition
  \ref{prop:equiv} can be applied is the \emph{instrumental variables}
  model, shown in Figure \ref{fig:inst2}(a).  As noted by
  \citet{didelez:07}, it is not possible observationally to tell
  whether 1 is a direct cause of 2, or there is a hidden common cause,
  or both.  Applying Proposition \ref{prop:equiv} to the graphs in
  Figure \ref{fig:inst2}(b) and (c) with $C = \{1\}$ and $D = \{2\}$
  shows that they are indeed equivalent to Figure \ref{fig:inst2}(a).
\end{exm}

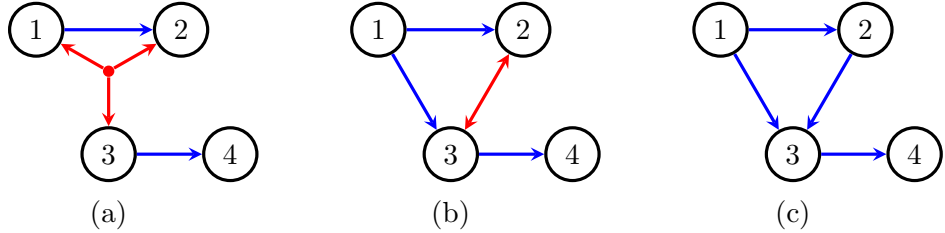
\begin{figure}
\begin{center}
\begin{tikzpicture}
 [rv/.style={circle, draw, very thick, minimum size=7mm, inner sep=0.75mm}, node distance=16mm, >=stealth]
 \pgfsetarrows{latex-latex};
\begin{scope}
 \node[circle, minimum size=1.5mm, inner sep=0mm, fill=red] (U) {};
 \node[rv] (1) at (150:1.1) {$1$};
 \node[rv] (2) at (270:1.1) {$3$};
 \node[rv] (3) at (30:1.1) {$2$};
 \node[rv, right of=2] (4) {$4$};
 \draw[->, very thick, color=red] (U) -- (1);
 \draw[->, very thick, color=red] (U) -- (2);
 \draw[->, very thick, color=red] (U) -- (3);
 \draw[->, very thick, color=blue] (1) -- (3);
 \draw[->, very thick, color=blue] (2) -- (4);
 \node[below of=2, yshift=8mm] {(a)};
\end{scope}
\begin{scope}[xshift=4.5cm]
 \node[minimum size=0mm, inner sep=0mm] (U) {};
 \node[rv] (1) at (150:1.1) {$1$};
 \node[rv] (2) at (270:1.1) {$3$};
 \node[rv] (3) at (30:1.1) {$2$};
 \node[rv, right of=2] (4) {$4$};
 \draw[->, very thick, color=blue] (1) -- (3);
 \draw[->, very thick, color=blue] (1) -- (2);
 \draw[<->, very thick, color=red] (2) -- (3);
 \draw[->, very thick, color=blue] (2) -- (4);
 \node[below of=2, yshift=8mm] {(b)};
\end{scope}
\begin{scope}[xshift=9cm]
 \node[minimum size=0mm, inner sep=0mm] (U) {};
 \node[rv] (1) at (150:1.1) {$1$};
 \node[rv] (2) at (270:1.1) {$3$};
 \node[rv] (3) at (30:1.1) {$2$};
 \node[rv, right of=2] (4) {$4$};
 \draw[->, very thick, color=blue] (1) -- (3);
 \draw[->, very thick, color=blue] (1) -- (2);
 \draw[->, very thick, color=blue] (3) -- (2);
 \draw[->, very thick, color=blue] (2) -- (4);
 \node[below of=2, yshift=8mm] {(c)};
\end{scope}
 \end{tikzpicture}
\caption{(a) An mDAG; (b) an mDAG which is Markov equivalent to the one in (a); and (c) a DAG which is Markov equivalent to the mDAGs.}
\label{fig:inst1}
\end{center}
\end{figure}

\begin{exm} \label{exm:equiv}
  The mDAG in Figure \ref{fig:inst1}(a) can be reduced to the simpler
  one in \ref{fig:inst1}(b) by applying Proposition \ref{prop:equiv}
  with $C = \{1\}$ and $D = \{2,3\}$.  This can be further simplified
  to the DAG in (c) by applying the proposition again, this time with
  $C = \{2\}$ and $D = \{3\}$.  By using the global Markov property for
  DAGs, this shows that each graph represents those distributions
  under which $X_4 \indep X_1, X_2 \,|\, X_3$.
\end{exm}

Define the \emph{skeleton} of an mDAG $\G(V,\mathcal{E}, \mathcal{B})$
as the simple undirected graph with vertices $V$, and edges $v - w$
whenever $v$ and $w$ appear together in some edge (directed or
bidirected) in $\G$.

\begin{prop} \label{prop:skel}
Let $\G$ and $\H$ be mDAGs with different skeletons.  Then 
if the state-space $\X_V$ is discrete 
$\M_m(\G) \neq \M_m(\H)$.
\end{prop}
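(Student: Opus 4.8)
The plan is to prove the contrapositive-free statement directly, by exhibiting a single distribution that lies in one model but not the other. Since $\G$ and $\H$ have different skeletons, their skeleton edge-sets differ, so there is a pair $\{a,b\}$ adjacent in exactly one of the two skeletons; relabelling the graphs if necessary, I would assume $a$ and $b$ are adjacent in $\G$ but non-adjacent in $\H$. It then suffices to construct a distribution $P \in \M_m(\G)$ with $P \notin \M_m(\H)$.

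First I would build $P$ inside $\M_m(\G) = \M(\bar\G, V)$ using the structural equation property (Definition \ref{dfn:sep}) on the canonical DAG $\bar\G$. Because $a$ and $b$ are adjacent in the skeleton of $\G$, either there is a directed edge between them or there is a facet $B \in \mathcal{B}$ with $\{a,b\} \subseteq B$. In both cases I can take a fair coin and make $X_a$ and $X_b$ equal to it (copying $X_a$ into $X_b$ along the directed edge, or letting the common latent determine both), while setting every other structural function to a fixed constant. The resulting joint law is a valid element of $\M(\bar\G)$, and its $V$-margin $P$ has every coordinate $X_c$ with $c \neq a,b$ degenerate, while $X_a$ and $X_b$ are perfectly correlated, hence dependent. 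Here discreteness is used only to guarantee that $\X_a$ and $\X_b$ each admit a genuine two-point distribution.

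The crux is to show $P \notin \M_m(\H)$. Suppose for contradiction $P \in \M_m(\H) = \M(\bar\H, V)$, so $P$ is the $V$-margin of some $Q \in \M(\bar\H)$ generated by the structural equation property; write $L(v)$ for the set of facets of $\H$ containing $v$, i.e.\ the latent parents of $v$ in $\bar\H$. Each $X_c$ with $c \neq a,b$ has a point-mass marginal under $P$, so the corresponding random variable is almost surely constant. Non-adjacency of $a$ and $b$ in $\H$ supplies the two key facts: $b \notin \pa_\H(a)$ (and symmetrically $a \notin \pa_\H(b)$), so $\pa_\H(a) \subseteq V \setminus \{a,b\}$ consists of degenerate coordinates; and no facet contains both, so $L(a) \cap L(b) = \emptyset$. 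Consequently $X_a = f_a(X_{\pa_\H(a)}, X_{L(a)}, E_a)$ is almost surely a function of $(X_{L(a)}, E_a)$ alone, and symmetrically $X_b$ of $(X_{L(b)}, E_b)$. Since in $\bar\H$ the facet-latents are exogenous and the whole family of latents and error variables is mutually independent, and $L(a), L(b)$ are disjoint, the two generating collections are independent; therefore $X_a$ and $X_b$ are independent, contradicting the construction.

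The main obstacle is precisely this last independence step: one must argue that, once all the remaining observed coordinates are forced to be degenerate, $X_a$ and $X_b$ reduce to functions of genuinely disjoint families of independent source variables. This is where both defining features of non-adjacency in $\H$ are needed simultaneously (no connecting directed edge rules out $b \in \pa_\H(a)$; no shared facet gives $L(a) \cap L(b) = \emptyset$), and where the exogeneity and mutual independence of the latents in the canonical DAG is essential. Everything else is routine: the reduction to a single discordant pair, and the elementary fact that a point-mass marginal corresponds to an almost-surely constant random variable.
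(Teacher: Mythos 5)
Your proof is correct, and it takes a genuinely different route from the paper's: the paper disposes of this proposition in a single line by citing Corollary 4.4 of \citet{evans:12}, where skeleton distinguishability comes out of the general machinery of inequality constraints for marginalized DAGs, whereas you give a direct, self-contained argument at the level of structural equations on the canonical DAGs. Both halves of your argument hold up: a perfectly correlated two-point pair at the discordant adjacency is realizable in $\M(\bar\G, V)$ whether the adjacency comes from a directed edge or from a facet containing both vertices; and for non-membership in $\M_m(\H) = \M(\bar\H, V)$, the point-mass margins force every coordinate other than $a,b$ to be almost surely constant, so that non-adjacency in $\H$ --- which simultaneously rules out $b \in \pa_\H(a)$, $a \in \pa_\H(b)$, and any bidirected face containing $\{a,b\}$ --- reduces $X_a$ and $X_b$ almost surely to measurable functions of disjoint families of the mutually independent exogenous sources (the facet variables of $\bar\H$ being functions of their own error terms), whence $X_a \indep X_b$, contradicting the construction. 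Two small caveats are worth recording. First, discreteness does not by itself ``guarantee a genuine two-point distribution'': you need the standing non-degeneracy assumption that $\X_a$ and $\X_b$ each contain at least two points, without which the proposition is false for trivial reasons (the paper's cited result needs this too). Second, your argument uses nothing else about the state-spaces, so it actually proves the conclusion whenever $|\X_a|, |\X_b| \geq 2$ with the remaining coordinates arbitrary --- strictly more than the stated proposition. This is only superficially in tension with the paper's remark that the models of Figure \ref{fig:inst2} are saturated when $X_2$ is continuous: your witness distribution places a point mass on every coordinate except $a$ and $b$, and that is exactly the degenerate boundary regime where Bonet-type saturation fails to be literal (it holds up to closure, and the graphs in that remark share a skeleton in any case). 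In terms of trade-offs, the paper's citation is short and situates the result within a systematic theory of inequality constraints and e-separation, while your proof is elementary, constructive, requires no external machinery, and isolates precisely where (and how little) discreteness is used.
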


\begin{proof}
This follows from \citet{evans:12}, Corollary 4.4.
\end{proof}

Note that this is not necessarily true for all state-spaces: if $X_2$
is continuous the three models defined by applying the marginal 
Markov property to the graphs in Figure \ref{fig:inst2} are all
saturated (i.e.\ contain any joint distribution over those variables),
even though they have skeleton $1-2-3$ \citep{bonet:01}.

\subsection{Bidirected Graphs and Connection to
  ADMGs} \label{sec:admgs}

The notion of latent projection was defined by \citet{verma:91a} with
respect to acyclic directed mixed graphs (though this term for such graphs 
was not introduced until \citet{richardson:03}).  The importance of our more
general formulation is now made clear.  

\begin{exm} \label{exm:3cycle} Consider the mDAGs in Figure
  \ref{fig:3cycle}.  The graph in Figure \ref{fig:3cycle}(a) is the
  latent projection one would obtain from a single latent variable
  having all three observed nodes as children, while Figure
  \ref{fig:3cycle}(b) corresponds to having three independent latents,
  each with a pair of observed variables as children.  The first graph
  is associated with a model which is clearly saturated, but the
  second is not: for example, if the observed variables are binary, it
  is not possible to have $P(X_1 = X_2 = X_3 = 1) = P(X_1 = X_2 = X_3
  = 0) = \frac{1}{2}$ \citep{fritz:12}.

\begin{figure}
\begin{center}
\begin{tikzpicture}
 [rv/.style={circle, draw, very thick, minimum size=7mm, inner sep=0.75mm}, node distance=20mm, >=stealth]
 \pgfsetarrows{latex-latex};
\begin{scope}
 \node[circle, minimum size=2mm, inner sep=0mm, fill=red] (U) {};
 \node[rv] (1) at (150:1.3) {$1$};
 \node[rv] (2) at (270:1.3) {$2$};
 \node[rv] (3) at (30:1.3) {$3$};
 \draw[->, very thick, color=red] (U) -- (1);
 \draw[->, very thick, color=red] (U) -- (2);
 \draw[->, very thick, color=red] (U) -- (3);
 \node[below of=2, yshift=10mm] {(a)};
\end{scope}
\begin{scope}[xshift=5cm]
 \node[minimum size=0mm, inner sep=0mm] (U) {};
 \node[rv] (1) at (150:1.1) {$1$};
 \node[rv] (2) at (270:1.1) {$2$};
 \node[rv] (3) at (30:1.1) {$3$};
 \draw[<->, very thick, color=red] (3.140) .. controls +(140:0.5) and +(40:0.5) .. (1.40);
 \draw[<->, very thick, color=red] (1.260) .. controls +(260:0.5) and +(160:0.5) .. (2.160);
 \draw[<->, very thick, color=red] (2.20) .. controls +(20:0.5) and +(280:0.5) .. (3.280);
 \node[below of=2, yshift=8mm] {(b)};
\end{scope}
 \end{tikzpicture}
\caption{(a) An mDAG corresponding to a saturated model; (b) an mDAG corresponding to a model with constraints.}
\label{fig:3cycle}
\end{center}
\end{figure}
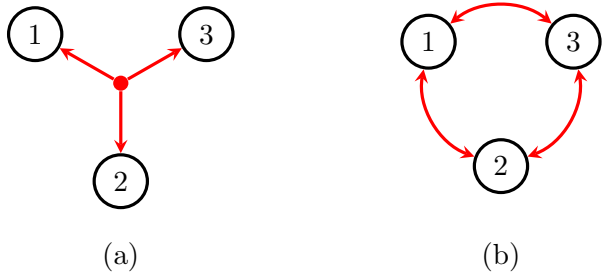
\end{exm}

Under Verma's original formulation of latent projection with ADMGs,
both these models are represented by the same graph: the one in Figure
\ref{fig:3cycle}(b).  However, as the previous example shows, the two
marginal models formed in this way are actually distinct.  The next
result generalizes this idea.

\begin{lem} \label{lem:notsat} Let $\G$ be a purely bidirected mDAG
  with vertices $V$, whose bidirected faces consist of all non-empty
  $B \subset V$ strict subsets of vertices.  Then the model $\M(\G)$
  is not saturated (for any state-space $\X_V$).
\end{lem}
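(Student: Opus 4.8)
The plan is to exhibit a single joint distribution over $\X_V$ that cannot lie in $\M(\G)$, namely a ``perfectly correlated'' one, and to obtain a contradiction directly from the structural equation representation of the canonical DAG. Throughout I take $|V| = n \ge 3$ (the cases $n \le 2$ being immediate: for $n = 2$ there are no non-trivial facets and the model is the independence model, while $n = 1$ is degenerate). First I would unpack $\bar{\G}$: since $\mathcal{B}$ consists of every non-empty strict subset of $V$, its facets are exactly the $(n-1)$-element sets, so $\bar{\G}$ has the observed vertices $V$ together with latents $U_1, \dots, U_n$, indexed by the \emph{omitted} vertex so that $\ch_{\bar{\G}}(U_i) = V \setminus \{i\}$. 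Equivalently each observed $v$ has parent set $\{U_i : i \neq v\}$, which I abbreviate $U_{-v}$. By definition $\M(\G) = \M_m(\G) = \M(\bar{\G}, V)$, so by the structural equation property any $P \in \M(\G)$ is realised by mutually independent variables $(U_i)_i$ and $(E_v)_v$ together with measurable functions satisfying $X_v = f_v(U_{-v}, E_v)$; in particular each $X_v$ is $\sigma(U_{-v}, E_v)$-measurable.

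For the counterexample I would choose, for each $v$, two distinct points $a_v, b_v \in \X_v$ (assuming, as is implicit, that each state space is non-degenerate), and let $P$ place mass $\tfrac12$ on $(a_v)_{v \in V}$ and mass $\tfrac12$ on $(b_v)_{v \in V}$. Suppose for contradiction $P \in \M(\G)$, with a representation as above. Since $a_v \neq b_v$, the $\{0,1\}$-valued indicator $Y := \Ind(X_v = b_v)$ describes the same event for every $v$ and is a function of $X_v$ alone; hence $Y$ is $\sigma(X_v) \subseteq \sigma(U_{-v}, E_v)$-measurable \emph{simultaneously for every} $v$. On the other hand $Y$ is Bernoulli$(\tfrac12)$ by construction, so it is not a.s.\ constant.

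The key step, which I also expect to be the main obstacle, is to show that a random variable measurable with respect to $\sigma(U_{-v}, E_v)$ for \emph{every} $v$ must be a.s.\ constant, contradicting the previous paragraph. I would deduce this from the mutual independence of the $U_i$ and $E_v$ using the elementary identity that, for mutually independent $\sigma$-algebras $\mathcal{A}, \mathcal{B}, \mathcal{C}$, one has $(\mathcal{A} \vee \mathcal{C}) \cap (\mathcal{B} \vee \mathcal{C}) = \mathcal{C}$ modulo null sets. Applying it with $\mathcal{C} = \sigma(U_3, \dots, U_n)$, $\mathcal{A} = \sigma(U_2, E_1)$ and $\mathcal{B} = \sigma(U_1, E_2)$ shows $Y$ is already $\sigma(U_3, \dots, U_n)$-measurable (note the error terms disappear at once, since each $E_v$ is absent from $\sigma(U_{-w}, E_w)$ for every $w \neq v$, of which there is at least one). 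Intersecting this in turn with $\sigma(U_{-3}, E_3)$ strips off $U_3$, and iterating over the remaining indices removes every $U_i$, leaving $Y$ measurable with respect to the trivial $\sigma$-algebra, i.e.\ a.s.\ constant. The only delicate point is the intersection identity itself, which I would establish by conditioning on $\mathcal{C}$ and using that a variable measurable with respect to two conditionally independent $\sigma$-algebras is conditionally degenerate. This contradiction shows $P \notin \M(\G)$, so the model is not saturated, for any (non-degenerate) state space $\X_V$.
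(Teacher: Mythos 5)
Your argument is correct, but it takes a genuinely different route from the paper's. The paper works quantitatively: choosing a fixed-point-free permutation $s$ of $V$, it bundles each error variable into a latent to form mutually independent $\sigma$-algebras $\mathcal{F}_v = \sigma(Y_{B_v}, E_{s(v)})$ with each $X_v$ measurable with respect to $\bigvee_{w \neq v} \mathcal{F}_w$, and then invokes the approximate-measurability result of Lemma \ref{lem:meas}: if $\E(X_v - X_w)^2 \leq \epsilon$ for all pairs, then $\Var X_v \leq |V|\epsilon$. This excludes not only your perfectly correlated two-point distribution but a whole neighbourhood of it (fair binary margins with $P(\{X_v \text{ all equal}\}) = 1-\epsilon$ for small $\epsilon > 0$). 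You instead prove the exact ($\epsilon = 0$) version: any $Y$ that is $\sigma(U_{-v}, E_v)$-measurable for every $v$ is a.s.\ constant, by iterating the identity $(\mathcal{A} \vee \mathcal{C}) \cap (\mathcal{B} \vee \mathcal{C}) = \mathcal{C}$ modulo null sets for jointly independent $\mathcal{A}, \mathcal{B}, \mathcal{C}$; your first application, with $\mathcal{C} = \sigma(U_3, \ldots, U_n)$, removes both error terms at once and so does the work of the paper's permutation trick. You are right that the intersection identity is the delicate point: it fails under merely pairwise independence (take $\xi, \eta$ independent fair signs and $\zeta = \xi\eta$, where $\sigma(\xi,\zeta) \cap \sigma(\eta,\zeta) = \sigma(\xi,\eta) \neq \sigma(\zeta)$), but under joint independence your sketch is exactly right: $\mathcal{A} \vee \mathcal{C}$ and $\mathcal{B} \vee \mathcal{C}$ are conditionally independent given $\mathcal{C}$, so a variable measurable with respect to both satisfies $\E(Y^2 \,|\, \mathcal{C}) = (\E(Y \,|\, \mathcal{C}))^2$ and has zero conditional variance. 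The trade-off: your route is more self-contained and avoids the $\epsilon$ bookkeeping, while the paper's quantitative lemma buys strictly more, namely that the complement of the model contains an open set of distributions around the perfect-correlation point---stronger than needed for the lemma as stated, and information your exact argument does not give. Two cosmetic points: if singletons in $\X_v$ are not measurable, replace $\Ind(X_v = b_v)$ by the indicator of a measurable set separating $b_v$ from $a_v$; and both proofs implicitly require each $\X_v$ to contain two measurably distinguishable points, which is the role of the paper's remark about dichotomizing a non-trivial random variable.
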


\begin{proof} 
For each $v \in V$, let $B_v = V \setminus \{v\}$, so that
$\mathcal{B}$ consists of the sets $B_v$ and their subsets.  The
canonical DAG for $\bar\G$ has vertices $V \cup \{B_v : v \in V\}$ and
edges $B_v \rightarrow w$ whenever $v \neq w$.

Let $(X_V, Y_{\mathcal{B}})$ have a joint distribution which respects
the SEP with respect to $\bar\G$, so that, writing $\bs Y_{-v} \equiv
(Y_{B_w} : w \neq v)$, we have $X_v = f_v(\bs Y_{-v},E_v)$.

Given some permutation $s$ of $V$ such that $s(v) \neq v$ for any $v
\in V$, let $\mathcal{F}_v = \sigma(Y_{B_v}, E_{s(v)})$.  Note that each $X_v$ is
$\sigma\left(\bigvee_{w \neq v} \mathcal{F}_w\right)$-measurable, and that
all the $\sigma$-algebrae $\mathcal{F}_v$ are independent.

It follows from Lemma \ref{lem:meas} in the appendix that if $\E(X_v -
X_w)^2 \leq \epsilon$ for each $v,w$, then each $X_v$ has variance at
most $|V| \epsilon$.  But this precludes, for example, the possibility
of a joint binary distribution in which $P(\{X_v \text{ all equal}\})
= 1-\epsilon$ with $P(X_v = 0) = P(X_v = 1) = \frac{1}{2}$ for some
sufficiently small positive $\epsilon$.  Since it is always possible
to dichotomize a (non-trivial) random variable, this shows that the
model is not saturated on any state-space.
%
%
\end{proof}

In the case where mDAGs contain only bidirected edges, Markov
equivalence turns out to be very simple.

\begin{prop} \label{prop:bidi} Let $\G, \G'$ be mDAGs containing no
  directed edges.  Then $\M_m(\G) = \M_m(\G')$ if and only if
  $\G = \G'$.
\end{prop}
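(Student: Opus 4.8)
The plan is to prove the substantive (``only if'') direction, since the converse is immediate. Because neither $\G$ nor $\G'$ has directed edges, each graph is determined entirely by its simplicial complex of bidirected faces; writing $\mathcal{B}, \mathcal{B}'$ for these, it therefore suffices to deduce $\mathcal{B} = \mathcal{B}'$ from $\M_m(\G) = \M_m(\G')$. The engine of the argument will be the saturated/non-saturated dichotomy of Lemma \ref{lem:notsat}, read off at the level of a single, carefully chosen induced subgraph.

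First I would record that equal models have equal margins, and that margins are computable from induced subgraphs. Indeed $\M_m(\G, W)$ is by definition the set of $W$-marginals of $\M_m(\G) = \M(\bar\G, V)$, so $\M_m(\G)=\M_m(\G')$ forces $\M_m(\G,W) = \M_m(\G',W)$ for every $W \subseteq V$. Since there are no directed edges every $W$ is ancestral, so Proposition \ref{prop:subgraph}(a) gives $\M_m(\G,W) = \M_m(\G_W)$ and likewise for $\G'$; hence $\M_m(\G_W) = \M_m(\G'_W)$ for all $W$.

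Next I would argue by contradiction. Suppose $\mathcal{B} \neq \mathcal{B}'$; as both complexes contain every singleton, any set in their symmetric difference has size at least two, so (after possibly swapping the roles of $\G$ and $\G'$) I may fix some $B$ with $B \in \mathcal{B}$ and $B \notin \mathcal{B}'$. I would then specialise the previous step to $W = B$. On the $\G$ side, downward closure of $\mathcal{B}$ makes $\G_B$ the full simplex on $B$, whose unique non-trivial facet is $B$ itself: a single hidden common cause of all of $B$. Letting that latent carry the target joint law (as in Example \ref{exm:3cycle}(a)) shows $\M_m(\G_B)$ is saturated. On the $\G'$ side, downward closure of $\mathcal{B}'$ together with $B \notin \mathcal{B}'$ forces every face of $\G'_B$ to be a strict subset of $B$, so $\G'_B$ is a subgraph of the purely bidirected mDAG $\G^*_B$ whose faces are exactly the non-empty strict subsets of $B$. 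Lemma \ref{lem:notsat} says $\M_m(\G^*_B)$ is not saturated, and Proposition \ref{prop:subgraph}(b) gives $\M_m(\G'_B) \subseteq \M_m(\G^*_B)$, so $\M_m(\G'_B)$ is not saturated. This contradicts $\M_m(\G_B) = \M_m(\G'_B)$, and the contradiction establishes $\mathcal{B} = \mathcal{B}'$.

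The genuinely hard input --- that a purely bidirected graph over all strict subsets is not saturated --- is exactly Lemma \ref{lem:notsat}, so is available to us; the crux of the present argument is the structural observation that, because simplicial complexes are downward closed, a discrepancy can always be localised to a single set $B$ that is a full facet on one side and avoided entirely on the other. This is what lets me oppose a saturated induced model against one that is trapped, via monotonicity, below the non-saturated ``all strict subsets'' model. The only point demanding care is the direction of this localisation (choosing $B$ present in one complex but absent from the other, rather than merely a facet of one), so that the induced subgraph on the absent side genuinely contains no face equal to $B$.
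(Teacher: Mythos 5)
Your proposal is correct and follows essentially the same route as the paper: both localise a set $B \in \mathcal{B}(\G) \setminus \mathcal{B}(\G')$ to the induced subgraph over $B$ (legitimate since every vertex set is ancestral in a purely bidirected mDAG), observe that $\M_m(\G_B)$ is saturated while $\G'_B$ is a subgraph of the ``all strict subsets'' graph, and invoke Lemma \ref{lem:notsat} with the monotonicity of Proposition \ref{prop:subgraph}(b) to conclude $\M_m(\G'_B)$ is not saturated. Your added care about choosing $B$ in the set difference (rather than merely a facet of one complex) is exactly the point the paper's proof relies on implicitly, so nothing further is needed.
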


\begin{proof}
  Suppose that $\G$ and $\G'$ are not equal, so (without loss of
  generality) there exists some $B \in \mathcal{B}(\G) \setminus
  \mathcal{B}(\G')$.  Since $B$ is ancestral (there are no directed
  edges), it is sufficient to prove that $\M_m(\G_B) \neq
  \M_m(\G'_B)$, so assume that in fact the vertices of $\G$ and $\G'$
  are $B$.  The model $\M_m(\G)$ is saturated.

  Let $\tilde\G$ be the bidirected graph with vertices $B$ and such
  that $\mathcal{B}(\tilde\G)$ consists of all strict subsets of $B$;
  by Lemma \ref{lem:notsat} $\M_m(\tilde\G)$ is not saturated.  But
  $\G' \subseteq \tilde\G$, so $\M_m(\G') \subseteq \M_m(\tilde\G)
  \subset \M_m(\G)$, so in particular $\M_m(\G) \neq \M_m(\G')$
\end{proof}

It follows from this result that ordinary graphs are fundamentally
unable to fully represent marginal models, even if we add additional
kinds of edge; the number of possible marginal models just grows too
quickly.  Consequently our extension to hyper-edges is necessary.

\begin{cor}
No class of ordinary graphs (i.e.\ not hyper-graphs) is sufficient 
to represent marginal models of DAGs.
\end{cor}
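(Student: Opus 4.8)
The plan is to argue by cardinality. A class of ordinary graphs on the observed vertex set $V$ can represent marginal models only if it assigns distinct graphs to distinct models; so it suffices to show that, with $n = |V|$, the number of distinct marginal models grows strictly faster than the number of available ordinary graphs. First I would fix any finite repertoire of $t$ edge types and bound the number of ordinary graphs on $V$: each pair of vertices may carry any subset of the edge types, so there are at most $2^{t\binom{n}{2}}$ (or $2^{tn(n-1)}$ if edges are directed) such graphs, in either case of the form $2^{O(n^2)}$ for fixed $t$.

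For the lower bound I would invoke Proposition \ref{prop:bidi}, which tells us that purely bidirected mDAGs are in bijection with their marginal models. A purely bidirected mDAG on $V$ is exactly a simplicial complex $\mathcal{B}$, so it is enough to exhibit many distinct complexes. The cleanest choice is to let the nontrivial facets be an \emph{arbitrary} family of subsets of size exactly $m = \lfloor n/2 \rfloor$: any two such subsets are incomparable, so every such family is automatically an antichain, hence the facet set of a simplicial complex. This produces at least $2^{\binom{n}{m}}$ distinct bidirected mDAGs (for $n \geq 4$, so that $m \geq 2$), and by Proposition \ref{prop:bidi} at least $2^{\binom{n}{m}}$ distinct marginal models.

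It then remains only to compare the two bounds. Since $\binom{n}{\lfloor n/2 \rfloor} \geq 2^n/(n+1)$, the number of marginal models is at least $2^{2^n/(n+1)}$, which is doubly exponential in $n$ and therefore eventually exceeds the bound $2^{O(n^2)}$ on the number of graphs, for every fixed $t$. (The same comparison survives even if the number of edge types is permitted to grow polynomially in $n$.) Hence, for $n$ large enough there are strictly more marginal models than ordinary graphs, no injective assignment of models to graphs exists, and the class fails to represent all marginal models.

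I expect the only genuine subtlety to be pinning down what ``represent'' should mean, and thus why it is an injective model-to-graph assignment that must be refuted; once that is fixed, the doubly-versus-singly exponential comparison is the heart of the argument. The combinatorial lower bound itself is made painless by restricting to equal-sized facets, so that the antichain condition holds for free and no appeal to the full asymptotics of Dedekind numbers is required.
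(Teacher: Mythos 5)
Your proof is correct and follows essentially the same route as the paper: both arguments count purely bidirected mDAGs via Proposition \ref{prop:bidi} against the $2^{O(n^2)}$ bound on ordinary graphs with a fixed number of edge types, using the fact that the number of simplicial complexes grows like $2^{\binom{n}{\lfloor n/2\rfloor}}$. The only difference is cosmetic: you make the lower bound self-contained with the middle-layer antichain construction, where the paper simply cites \citet{kleitman:69} for the same estimate.
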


\begin{proof}
  The number of simplicial complexes on $n$ vertices grows faster than
  $2^{n \choose \lfloor n/2 \rfloor}$ \citep[see, for
  example,][]{kleitman:69}, so by Proposition \ref{prop:bidi} there
  are at least this many marginal models.  For a class of ordinary
  graphs with $k$ different edge types, there are only
  $2^{k{n \choose 2}}$ different graphs, and
  ${n \choose \lfloor n/2 \rfloor} > k{n \choose 2}$ for sufficiently
  large $n$.  Hence ordinary graphs are not sufficient.
\end{proof}

\subsection{mDAGs on Three Variables}

There are 48 distinct mDAGs over three unlabelled vertices (i.e.\ up
to permutation of the vertices).  Using Propositions
\ref{prop:subgraph}, \ref{prop:equiv} and \ref{prop:skel} one can show
that of these there are 8 equivalence classes of induced models.
These are shown in Figure \ref{fig:three}.  Five of them are DAG
models, the other three being the instrumental variables model from
Figure \ref{fig:inst2}(a), the `unrelated confounding' model studied
by \citet{evans:12}, and the pairwise bidirected model from Example
\ref{exm:3cycle}.

\newlength{\nodesep}
\setlength{\nodesep}{13mm}
\newlength{\blocksep}
\setlength{\blocksep}{35mm}

\newcommand{\xvar}{}
\newcommand{\yvar}{}
\newcommand{\zvar}{}

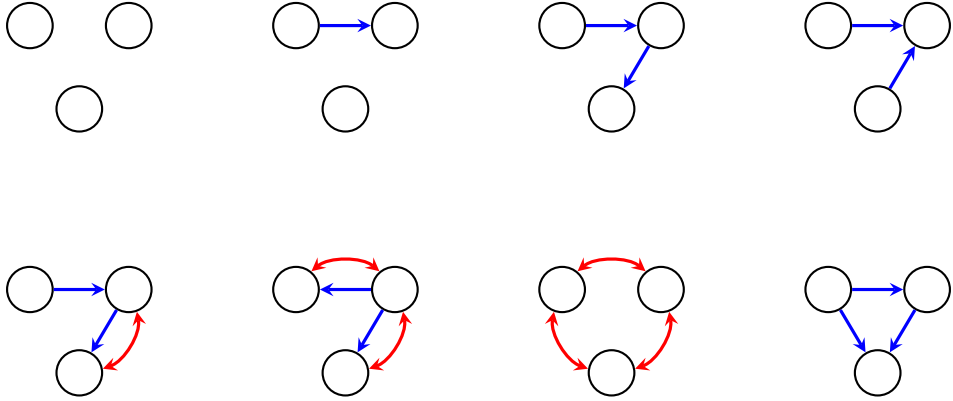
\begin{figure}
\begin{center}
\begin{tikzpicture}
[rv/.style={draw, circle, thick, minimum size=6mm, inner sep=0.8mm}, node distance=\nodesep, >=stealth]
 \pgfsetarrows{latex-latex};
\begin{scope}
\node[rv]  (x1)            {\xvar};
\node[rv, right of=x1] (x2) {\yvar};
\node[rv, below of=x2, xshift=-\nodesep/2, yshift=\nodesep*0.15] (x3) {\zvar};
\end{scope}
\begin{scope}[xshift=\blocksep]
\node[rv]  (x1)            {\xvar};
\node[rv, right of=x1] (x2) {\yvar};
\node[rv, below of=x2, xshift=-\nodesep/2, yshift=\nodesep*0.15] (x3) {\zvar};
\draw[->, very thick, color=blue] (x1) -- (x2);
\end{scope}
\begin{scope}[xshift=\blocksep*2]
\node[rv]  (x1)            {\xvar};
\node[rv, right of=x1] (x2) {\yvar};
\node[rv, below of=x2, xshift=-\nodesep/2, yshift=\nodesep*0.15] (x3) {\zvar};
\draw[->, very thick, color=blue] (x1) -- (x2);
\draw[->, very thick, color=blue] (x2) -- (x3);
\end{scope}
\begin{scope}[xshift=\blocksep*3]
\node[rv]  (x1)            {\xvar};
\node[rv, right of=x1] (x2) {\yvar};
\node[rv, below of=x2, xshift=-\nodesep/2, yshift=\nodesep*0.15] (x3) {\zvar};
\draw[->, very thick, color=blue] (x1) -- (x2);
\draw[<-, very thick, color=blue] (x2) -- (x3);
\end{scope}
\begin{scope}[yshift=-\blocksep]
\node[rv]  (x1)            {\xvar};
\node[rv, right of=x1] (x2) {\yvar};
\node[rv, below of=x2, xshift=-\nodesep/2, yshift=\nodesep*0.15] (x3) {\zvar};
\draw[->, very thick, color=blue] (x1) -- (x2);
\draw[->, very thick, color=blue] (x2) -- (x3);
\draw[<->, very thick, color=red] (x2.290) .. controls +(-80:0.3) and +(20:0.3) .. (x3.10);
\end{scope}
\begin{scope}[xshift=\blocksep, yshift=-\blocksep]
\node[rv]  (x1)            {\xvar};
\node[rv, right of=x1] (x2) {\yvar};
\node[rv, below of=x2, xshift=-\nodesep/2, yshift=\nodesep*0.15] (x3) {\zvar};
\draw[<-, very thick, color=blue] (x1) -- (x2);
\draw[->, very thick, color=blue] (x2) -- (x3);
\draw[<->, very thick, color=red] (x1.50) .. controls +(40:0.3) and +(140:0.3) .. (x2.130);
\draw[<->, very thick, color=red] (x2.290) .. controls +(-80:0.3) and +(20:0.3) .. (x3.10);
\end{scope}
\begin{scope}[xshift=\blocksep*2, yshift=-\blocksep]
\node[rv]  (x1)            {\xvar};
\node[rv, right of=x1] (x2) {\yvar};
\node[rv, below of=x2, xshift=-\nodesep/2, yshift=\nodesep*0.15] (x3) {\zvar};
\draw[<->, very thick, color=red] (x1.50) .. controls +(40:0.3) and +(140:0.3) .. (x2.130);
\draw[<->, very thick, color=red] (x2.290) .. controls +(-80:0.3) and +(20:0.3) .. (x3.10);
\draw[<->, very thick, color=red] (x3.170) .. controls +(160:0.3) and  +(-100:0.3) .. (x1.250);
\end{scope}
\begin{scope}[xshift=\blocksep*3, yshift=-\blocksep]
\node[rv]  (x1)            {\xvar};
\node[rv, right of=x1] (x2) {\yvar};
\node[rv, below of=x2, xshift=-\nodesep/2, yshift=\nodesep*0.15] (x3) {\zvar};
\draw[->, very thick, color=blue] (x1) -- (x2);
\draw[->, very thick, color=blue] (x1) -- (x3);
\draw[->, very thick, color=blue] (x2) -- (x3);
\end{scope}
\end{tikzpicture}
\caption{mDAGs representing the eight distinct models over three
  (unlabelled) variables.}
\label{fig:three}
\end{center}
\end{figure}

For four nodes the problem becomes much more complicated.  As an
illustration of the limitations of the results in this section, we
note that we are unable to determine whether or not the graphs in
Figure \ref{fig:unknown} represent saturated models under the marginal
Markov property or not.

\begin{figure}
\begin{center}
\begin{tikzpicture}
 [rv/.style={circle, draw, very thick, minimum size=7mm, inner sep=0.75mm}, node distance=16mm, >=stealth]
 \pgfsetarrows{latex-latex};
\begin{scope}[yshift=1cm]
 \node[rv] (1)  {$1$};
 \node[rv, right of=1] (2) {$2$};
 \node[rv, below of=1] (3) {$3$};
 \node[rv, right of=3] (4) {$4$};
 \draw[<->, very thick, color=red] (1) -- (2);
 \draw[<->, very thick, color=red] (2) -- (4);
 \draw[<->, very thick, color=red] (1) -- (3);
 \draw[<->, very thick, color=red] (4) -- (3);
 \draw[->, very thick, color=blue] (1) -- (4);
 \draw[->, very thick, color=blue] (2) -- (3);
 \node[below of=3, xshift=8mm, yshift=8mm] {(a)};
\end{scope}
\begin{scope}[xshift=4cm, yshift=-3mm]
 \node[rv] (1) {$1$};
 \node[rv, right of=1] (2) {$2$};
 \node[rv, right of=2] (4) {$4$};
 \node[rv, above of=2, yshift=-2mm] (3) {$3$};
 \draw[->, very thick, color=blue] (1) -- (2);
 \draw[->, very thick, color=blue] (2) -- (4);
 \draw[->, very thick, color=blue] (3) -- (4);
 \draw[<->, very thick, color=red] (1) -- (3);
 \draw[<->, very thick, color=red] (2) -- (3);
 \draw[<->, very thick, color=red] (1) .. controls +(1,-.7) and +(-1,-.7) .. (4);
 \node[below of=2, yshift=5mm] {(b)};
\end{scope}
\begin{scope}[xshift=9cm, yshift=2mm]
 \node[rv] (1) {$1$};
 \node[rv, right of=1] (2) {$2$};
 \node[rv, right of=2] (3) {$3$};
 \node[rv, right of=3] (4) {$4$};
 \draw[->, very thick, color=blue] (1) -- (2);
 \draw[->, very thick, color=blue] (2) -- (3);
 \draw[->, very thick, color=blue] (3) -- (4);
 \draw[<->, very thick, color=red] (1.330) .. controls +(.7,-.7) and +(-.7,-.7) .. (3.210);
 \draw[<->, very thick, color=red] (2.30) .. controls +(.7,.7) and +(-.7,.7) .. (4.150);
 \draw[<->, very thick, color=red] (1.320) .. controls +(1,-1) and +(-1,-1) .. (4.220);
 \node[below of=2, xshift=8mm, yshift=0mm] {(c)};
\end{scope}
 \end{tikzpicture}
\caption{Three mDAGs whose associated models under the marginal Markov property may or may not be saturated.}
\label{fig:unknown}
\end{center}
\end{figure}
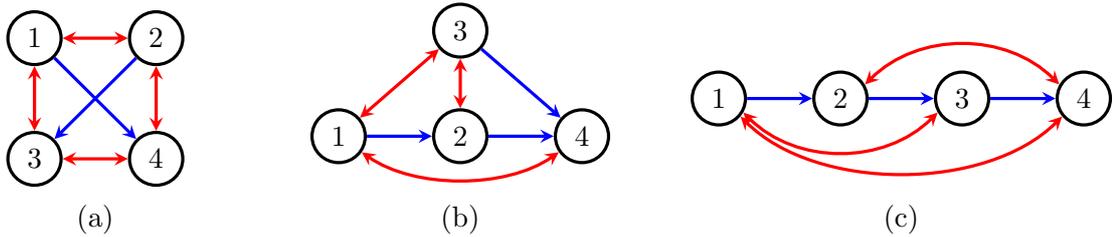

\section{Causal Models and Interventions} \label{sec:causal}

The use of DAGs to represent causal models goes back to the work of
Sewall Wright, and has found popularity more recently \citep[see][and
references therein]{sgs:00, pearl:09}.  The use of an arrow $X
\rightarrow Y$ to express the statement that `$X$ causes $Y$' is
natural and intuitive, and directed acyclic graphs provide a
convenient recursive structure for representing causal models, with
acyclicity enforcing the idea that causes should precede effects in
time.

Note that the structural equation property as formulated in Definition
\ref{dfn:sep} only posits the existence of \emph{some} functions $f_v$
and error variables $E_v$ which generate the required joint
distribution.  In general, there will be many graphical structures and
pairs $(f_v, E_v)$ which give rise to a given distribution.  However,
if a distribution is \emph{structurally} generated in this way, then
when some of the variables in the system are intervened upon (in an
appropriately defined way), a suitably modified version of the
original DAG will correctly represent the resulting interventional
probability distribution \citep{pearl:09}.  Analogously we will show
that mDAGs are able to represent the models induced on the
\emph{margins} of DAGs after intervention.


\begin{dfn}
  Let $\D$ be a DAG with vertices $V$, and suppose that data are
  generated according to a particular collection of pairs $(f_v,
  E_v)$, $v \in V$ which satisfy the SEP for $\D$.
  An \emph{intervention} on $A \subseteq V$ replaces $(f_v, E_v)$ with
  $(\tilde{f}_v, \tilde{E}_v)$ for each $v \in A$, where $\tilde{f}_v
  : \mathscr{E}_v \rightarrow \X_v$ is measurable, and all $E_w$,
  $\tilde{E}_v$ are independent.

  Denote by $\D_{\overline{A}}$ the \emph{DAG $\D$ after intervening
    on $A$}, formed from $\D$ by removing edges directed towards $v
  \in A$.
\end{dfn}

An intervention removes the dependence of a variable on all of its
parents.  If $P$ is generated by $(f_v, E_v)$ according to the DAG
$\D$, then the distribution $P_{\overline{A}}$ after intervention on
$A$ is generated according to the \emph{mutilated} DAG
$\D_{\overline{A}}$, and hence obeys the SEP for
$\M(\D_{\overline{A}})$.  This definition of an intervention is based
on the one in \citet{pearl:09}.

Note that intervention is not a purely probabilistic operation, in the
sense that its effect it is not identifiable from the observed
probability distribution alone: it relies upon knowledge of the full
structural generating system.

\subsection{Causal mDAGs}

Let $\D$ be a DAG with vertex set $U \dot\cup V$ and let $\G =
\mathfrak{p}(\D, V)$.  If $(X_U,X_V)$ are generated according to the
structural equation property for $\D$, the definitions and results of
previous sections tell us that the distribution of $X_V$, say $P$, is
contained in $\M_m(\G)$.  If an intervention is performed on some of
the vertices in $V$, what then should we expect from the resulting
marginal distribution?

\begin{dfn}
  Let $\G(V,\mathcal{E},\mathcal{B})$ be an mDAG, and $A \subseteq V$.
  The mDAG $\G_{\overline{A}}$ has vertices $V$, directed edges
  $\mathcal{E}_{\overline{A}} = \{(w,v) \in \mathcal{E} : v \notin
  A\}$,
  and bidirected faces $\{B \setminus A : B \in \mathcal{B}\}$
  (together with the singletons $\{a\}$ for $a \in A$).
\end{dfn}

 \begin{figure}
 \begin{center}
 \begin{tikzpicture}[rv/.style={circle, draw, very thick, minimum size=6.5mm, inner sep=1mm}, node distance=20mm, >=stealth]
 \pgfsetarrows{latex-latex};
\begin{scope}
 \node (U2) {};
 \node[rv] (2) at (150:1.35) {$c$};
 \node[rv] (4) at (270:1.35) {$e$};
 \node[rv] (3) at (30:1.35) {$d$};
 \node[yshift=-13.5mm] (U3) at (3) {};
 \node[rv] (6) at ($(U3)+(330:1.35)$) {$f$};
 \node[rv, left of=2] (1) {$a$};
 \node[rv, left of=4, xshift=5mm] (5) {$b$};
 \draw[<->, very thick, color=red] (1) -- (2);
 \draw[->, very thick, color=blue] (2) -- (4);
 \draw[->, very thick, color=blue] (3) -- (6);
 \draw[->, very thick, color=blue] (5) -- (4);
 \draw[->, very thick, color=blue] (3) -- (4);
 \draw[<->, very thick, color=red] (2) .. controls +(330:1) and +(90:1) .. (4);
 \draw[<->, very thick, color=red] (6) .. controls +(150:1) and +(30:1) .. (4);
 \end{scope}
\end{tikzpicture}
\caption{The mDAG from Figure \ref{fig:mdag} after intervening on $d$.}
\label{fig:mdaginter}
\end{center}
\end{figure}
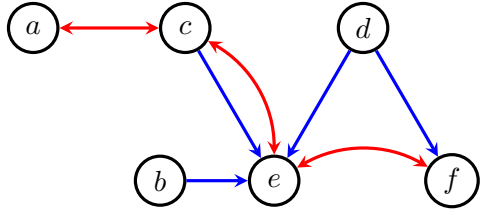

In other words to obtain $\G_{\overline{A}}$ from $\G$, delete
directed edges pointing to $A$, and remove vertices in $A$ from each
bidirected edge.  For example Figure \ref{fig:mdaginter} shows the
result of intervening on $\{d\}$ in the mDAG from Figure
\ref{fig:mdag}.  The next result shows that this definition of a
mutilated mDAG is sensible, because mutilation and projection 
commute.

\begin{prop} \label{prop:interven}
  Let $A \subseteq V$.  If $\G = \mathfrak{p}(\D, V)$, then
  $\G_{\overline{A}} = \mathfrak{p}(\D_{\overline{A}}, V)$.
\end{prop}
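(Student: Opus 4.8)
The plan is to verify that the two mDAGs, which both have vertex set $V$, have identical directed edges and identical bidirected faces, treating these two pieces separately. Throughout I would exploit the fact that $A \subseteq V$ is disjoint from the latent set $U$, and that $\D_{\overline{A}}$ is obtained from $\D$ by deleting only those arrows that point \emph{into} $A$.

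For the directed edges I would start from the observation that in $\D_{\overline{A}}$ every vertex of $A$ has had all its incoming edges deleted, so no non-trivial directed path can terminate at a vertex of $A$. Consequently any directed edge $(a,b)$ of $\mathfrak{p}(\D_{\overline{A}},V)$ has $b \notin A$, which matches the deletion rule $\mathcal{E}_{\overline{A}} = \{(w,v) \in \mathcal{E} : v \notin A\}$ defining $\G_{\overline{A}}$. For a target $b \notin A$, a directed path $a \rightarrow \cdots \rightarrow b$ with all non-endpoints in $U$ has every edge pointing into either $U$ (disjoint from $A$) or into $b \notin A$; hence none of its edges is deleted, and the path survives in $\D_{\overline{A}}$. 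Since $\D_{\overline{A}} \subseteq \D$ supplies the converse inclusion for free, the $U$-interior directed paths into any $b \notin A$ are exactly the same in $\D$ and $\D_{\overline{A}}$, and the two edge sets coincide.

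For the bidirected faces I would first simplify the prescription defining $\G_{\overline{A}}$. Because $\mathcal{B} = \mathcal{B}(\G)$ is a simplicial complex, each $B \setminus A$ is again a face, and conversely every face $C$ with $C \cap A = \emptyset$ equals $C \setminus A$; hence $\{B \setminus A : B \in \mathcal{B}\}$ is precisely the collection of non-empty faces of $\G$ disjoint from $A$, to which the definition adjoins the singletons $\{a\}$, $a \in A$. I would then characterize the faces of $\mathfrak{p}(\D_{\overline{A}},V)$ directly. Since $\D_{\overline{A}}$ is again a DAG, each of its faces is a singleton, so a hidden common cause of a set $W$ with $|W|\ge 2$ must be a single unobserved $a \in U$ reaching every $w \in W$ by a directed path with interior in $U$. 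No vertex of $A$ can belong to such a $W$, since in $\D_{\overline{A}}$ no vertex of $A$ is reachable from $U$. For $W \subseteq V \setminus A$, the same edge-survival argument as above shows that a $U$-interior path from $a \in U$ to $w$ exists in $\D_{\overline{A}}$ iff it exists in $\D$, so $W$ shares a hidden common cause in $\D_{\overline{A}}$ iff $W \in \mathcal{B}(\G)$. Together with the singletons (faces of any mDAG), this reproduces exactly the collection computed for $\G_{\overline{A}}$.

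I expect the main obstacle to be the bidirected-face bookkeeping rather than the directed part: one must check that the operation $B \mapsto B \setminus A$ together with the singleton patching reproduces precisely the simplicial complex generated by hidden common causes in $\D_{\overline{A}}$, verifying both that faces meeting $A$ are correctly discarded and that the hidden-common-cause relation on subsets of $V \setminus A$ is genuinely unchanged by deleting the arrows into $A$. The crucial structural input that makes both verifications go through is that $A \subseteq V$ is disjoint from $U$, so deleting arrows into $A$ neither creates nor destroys any directed path whose interior lies in $U$ and whose endpoint avoids $A$.
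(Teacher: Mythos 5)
Your proof is correct and follows essentially the same route as the paper's: both split the verification into directed edges and bidirected faces, and both rest on the single observation that, because $A \cap U = \emptyset$, a directed path with non-endpoint vertices in $U$ survives the deletion of edges into $A$ precisely when its terminal vertex lies outside $A$. If anything, your bidirected-face bookkeeping is slightly more careful than the paper's: you explicitly verify that any face of size at least two in $\mathfrak{p}(\D_{\overline{A}}, V)$ avoids $A$ (via unreachability of $A$ from $U$) and that $\{B \setminus A : B \in \mathcal{B}\}$ collapses to the faces of $\G$ disjoint from $A$ by simpliciality, points the paper leaves implicit in its choice of $B' \supseteq B$ with $B' \setminus A = B$.
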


\begin{proof}
  Note that the definition of latent projections and of hidden common
  causes refer only to directed paths with non-endpoint vertices in
  $U$.  Since $U \cap A = \emptyset$, it follows that such a directed
  path in $\D$ is also contained in $\D_{\overline{A}}$ if and only if
  the final vertex is not in $A$.  Hence, the directed edges in
  $\mathfrak{p}(\D_{\overline{A}}, V)$ are precisely those which are
  in $\G = \mathfrak{p}(\D, V)$ and do not point to $A$, as required.

  Now, suppose $B \in \mathcal{B}(\G_{\overline{A}})$: then there is
  some $B' \in \mathcal{B}(\G)$ with $B' \setminus A = B$.  Hence $B'$
  share a hidden common cause in $\D$ with respect to $U$, and by the
  same reasoning as above, the vertices in $B' \setminus A = B$ share
  a hidden common cause in $\D_{\overline{A}}$ with respect to $U$.
  Hence $B \in \mathcal{B}(\mathfrak{p}(\D_{\overline{A}}, V))$

  Conversely, if $B \in \mathcal{B}(\mathfrak{p}(\D_{\overline{A}},
  V))$, then the elements of $B$ share a hidden common cause in
  $\D_{\overline{A}}$ with respect to $U$, and hence also in the
  supergraph $\D$.  So there is some $B' \supseteq B$ with $B'
  \setminus A = B$ such that $B' \in \mathcal{B}(\G)$, and hence $B
  \in \mathcal{B}(\G_{\overline{A}})$.
\end{proof}

It follows from this result that mDAGs not only represent the
structure of a margin of a DAG model, but they can also correctly
represent the manner in which it will change under interventions on
the observed variables.  

\begin{prop}
  Let $\D, \D'$ be DAGs with the same latent
  projection $\G$ over some set of variables $V$.  For any subset $A
  \subseteq V$ of intervened nodes, $\M(\D_{\overline{A}},
  V) = \M(\D'_{\overline{A}}, V)$
\end{prop}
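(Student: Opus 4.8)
The plan is to combine Proposition \ref{prop:interven}, which shows that mutilation and projection commute, with Theorem \ref{thm:equiv}, which shows that equality of latent projections over $V$ implies equality of marginal models over $V$. The hypothesis gives us $\mathfrak{p}(\D, V) = \G = \mathfrak{p}(\D', V)$, and we wish to conclude that the mutilated DAGs $\D_{\overline{A}}$ and $\D'_{\overline{A}}$ induce the same marginal model over $V$.

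First I would apply Proposition \ref{prop:interven} to each of $\D$ and $\D'$. Writing $U$ (respectively $U'$) for the latent vertices of $\D$ (respectively $\D'$), and noting that intervention on $A \subseteq V$ does not touch the latent vertices, the proposition yields
\begin{align*}
\mathfrak{p}(\D_{\overline{A}}, V) = \G_{\overline{A}} = \mathfrak{p}(\D'_{\overline{A}}, V).
\end{align*}
The key point is that $\G_{\overline{A}}$ depends only on the common latent projection $\G$ and on $A$, so both mutilated DAGs project to the \emph{same} mDAG. This is precisely the situation required by Theorem \ref{thm:equiv}.

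Second, I would invoke Theorem \ref{thm:equiv} with the two DAGs $\D_{\overline{A}}$ and $\D'_{\overline{A}}$ in place of $\D$ and $\D'$. Since these have a common latent projection onto $V$, the theorem gives
\begin{align*}
\M(\D_{\overline{A}}, V) = \M(\D'_{\overline{A}}, V),
\end{align*}
which is the desired conclusion. The only point needing care is that the mutilated objects $\D_{\overline{A}}$ are genuine DAGs with vertex set containing $V$ together with the (untouched) latent vertices, so that both Proposition \ref{prop:interven} and Theorem \ref{thm:equiv} apply verbatim; since mutilation only deletes directed edges into $A \subseteq V$, acyclicity is preserved and the latent vertices $U$ are unaffected, so this is immediate.

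I expect no substantive obstacle here: the work has already been done in the two cited results, and the argument is essentially a diagram-chase showing that the square formed by projection and mutilation commutes in a way that feeds directly into the marginal-model equivalence. The main thing to verify is the bookkeeping that intervention commutes with taking the latent projection precisely because interventions act only on observed vertices, which is exactly the content of Proposition \ref{prop:interven}.
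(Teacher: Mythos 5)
Your proposal is correct and matches the paper's own proof essentially verbatim: the paper likewise applies Proposition \ref{prop:interven} to conclude $\mathfrak{p}(\D_{\overline{A}}, V) = \mathfrak{p}(\D'_{\overline{A}}, V)$ and then invokes Theorem \ref{thm:equiv}. Your extra remarks on acyclicity being preserved and the latent vertices being untouched are sound bookkeeping that the paper leaves implicit.
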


\begin{proof}
  By Proposition \ref{prop:interven}, $\mathfrak{p}(\D_{\overline{A}},
  V) = \mathfrak{p}(\D'_{\overline{A}}, V)$, so that the result
  follows from Theorem \ref{thm:equiv}.
\end{proof}

Two DAGs may be observationally Markov equivalent, such as the graphs
$1 \rightarrow 2$ and $1 \leftarrow 2$ (which both represent saturated
models).  However, for any two distinct causal DAGs, there is always
some intervention under which the resulting mutilated DAGs are not
Markov equivalent.  For example, if we intervene on $1$ in the causal
model $1 \leftarrow 2$ the two variables become independent, but in
$1 \rightarrow 2$ the model remains unchanged.

We might hope that something similar holds for mDAGs: given distinct
mDAGs $\G, \H$, is there always some intervention such that
$\M_m(\G_{\overline{A}}) \neq \M_m(\H_{\overline{A}})$, so that one
could in principle distinguish between the two causal models via a
suitable experiment?  In fact this turns out not to be the case:
consider the mDAGs in Figures \ref{fig:3cycle2}(a) and (b); denote
then by $\G$ and $\H$ respectively.  Both represent saturated models,
so in particular $\M_m(\G) = \M_m(\H)$.  In addition, after
intervening on any of the vertices the resulting mutilated graphs are
the same: $\G_{\overline{A}} = \H_{\overline{A}}$ for any
$A \neq \emptyset$.  Hence
$\M(\G_{\overline{A}}) = \M(\H_{\overline{A}})$ for any
$A \subseteq \{1,2,3\}$.

The next result shows that two causal mDAGs can be distinguished by
intervention if they have different underlying DAGs.

\begin{prop}
  Let $\G$ and $\H$ be mDAGs on the same vertex set $V$, and suppose
  that their underlying DAGs are distinct.  Then for some $A \subseteq
  V$, $\M_m(\G_{\overline{A}}) \neq \M_m(\H_{\overline{A}})$.
\end{prop}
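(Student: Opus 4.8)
The plan is to reduce both graphs, by a single well-chosen intervention, to ordinary (latent-free) DAG models whose conditional independence structure differs. Since the underlying DAGs of $\G$ and $\H$ are distinct as directed graphs, their edge sets differ, so there is a vertex $w \in V$ whose parent sets disagree: $\pa_\G(w) \neq \pa_\H(w)$. Without loss of generality pick $x \in \pa_\G(w) \setminus \pa_\H(w)$; note $x \neq w$ since self-loops are disallowed.

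First I would intervene on $A = V \setminus \{w\}$ and identify the mutilated graphs explicitly. By the definition of $\G_{\overline A}$, the only directed edges retained are those pointing at $w$, so $\pa_{\G_{\overline A}}(w) = \pa_\G(w)$ and every vertex of $A$ is childless. Moreover each bidirected face $B$ is replaced by $B \setminus A = B \cap \{w\}$, which is either empty or the singleton $\{w\}$; thus $\G_{\overline A}$ has no non-trivial bidirected faces. Consequently its canonical DAG $\overline{\G_{\overline A}}$ introduces no latent vertices, and $\M_m(\G_{\overline A}) = \M(\overline{\G_{\overline A}}, V) = \M(\overline{\G_{\overline A}})$ is simply the DAG model whose only edges are $\pa_\G(w) \to w$. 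The identical analysis applied to $\H$ shows $\M_m(\H_{\overline A})$ is the DAG model with only edges $\pa_\H(w) \to w$.

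Next I would separate the two models with a concrete distribution. In the DAG underlying $\H_{\overline A}$ the vertex $x$ lies outside $\pa_\H(w) \cup \{w\}$, so $w$ and $x$ are d-separated by the empty set, and hence every $P \in \M_m(\H_{\overline A})$ satisfies $X_w \indep X_x$. On the other hand, in $\G_{\overline A}$ we have $x \in \pa_\G(w)$, so via the structural equation property we may set each $X_a$ (for $a \in A$) to an independent non-degenerate variable and put $X_w = X_x$; this is a legitimate choice of $f_w$ since $X_x$ is among the parents of $w$. The resulting distribution lies in $\M_m(\G_{\overline A})$ but makes $X_w$ and $X_x$ perfectly dependent, so it violates $X_w \indep X_x$ and is therefore not in $\M_m(\H_{\overline A})$. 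Hence $\M_m(\G_{\overline A}) \neq \M_m(\H_{\overline A})$, as required.

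I expect the only real work to be the bookkeeping in the middle step: confirming that after intervening on $A = V \setminus \{w\}$ both mutilated mDAGs collapse to latent-free DAGs, so that their marginal models coincide with ordinary DAG models and the entire comparison reduces to comparing the parent sets of $w$. Once that is established, the final separation argument is routine, mirroring the $1 \leftarrow 2$ versus $1 \rightarrow 2$ example discussed above: differing parents of a single node already force a detectable dependence under the appropriate intervention.
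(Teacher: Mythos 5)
Your proof is correct and takes essentially the same route as the paper's: you intervene on $A = V \setminus \{w\}$ for a vertex $w$ receiving an extra edge in one graph, observe that every bidirected face collapses to a subset of $\{w\}$ so both mutilated mDAGs become latent-free DAGs, and then separate the two models via independence versus perfect dependence of $X_w$ and the distinguishing parent. Your write-up is merely more explicit than the paper's about the bookkeeping (the canonical DAG adding no latent vertices, and the concrete construction $X_w = X_x$), but the argument is the same.
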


\begin{proof}
  Suppose that the edge $v \rightarrow w$ appears in $\G$ but not
  $\H$.  Then let $A = V \setminus \{w\}$: since non-trivial
  bidirected faces contain at least two vertices, $\G_{\overline{A}}$
  and $\H_{\overline{A}}$ are DAGs.  Therefore the only edges in
  $\G_{\overline{A}}$ and $\H_{\overline{A}}$ are those directed into
  $w$.  It follows that $X_v \indep X_w$ under any distribution in
  $\M_m(\H_{\overline{A}})$, whereas any form of dependence between
  $X_v$ and $X_w$ is possible in $\M_m(\G_{\overline{A}})$.
\end{proof}

\begin{rmk}
  The inability to distinguish between certain causal mDAGs is partly
  an artefact of the sort of interventions we consider.  If we allow
  more delicate interventions which can block a specific causal
  mechanism between any pair of variables, this would correspond to
  removing individual directed edges from the graph.  In this case, by
  blocking all the direct causal links we would obtain a distribution
  which satisfies the marginal Markov property for the underlying
  bidirected graphs.  It would then follow from Proposition
  \ref{prop:bidi} that causal models would be in one-to-one
  correspondence with graphs.

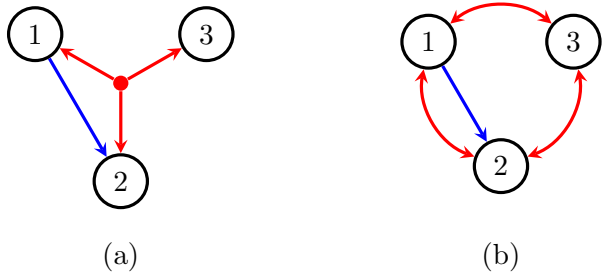
\begin{figure}
\begin{center}
\begin{tikzpicture}
 [rv/.style={circle, draw, very thick, minimum size=7mm, inner sep=0.75mm}, node distance=20mm, >=stealth]
 \pgfsetarrows{latex-latex};
\begin{scope}
 \node[circle, minimum size=2mm, inner sep=0mm, fill=red] (U) {};
 \node[rv] (1) at (150:1.3) {$1$};
 \node[rv] (2) at (270:1.3) {$2$};
 \node[rv] (3) at (30:1.3) {$3$};
 \draw[->, very thick, color=red] (U) -- (1);
 \draw[->, very thick, color=red] (U) -- (2);
 \draw[->, very thick, color=red] (U) -- (3);
 \draw[->, very thick, color=blue] (1) -- (2);
 \node[below of=2, yshift=10mm] {(a)};
\end{scope}
\begin{scope}[xshift=5cm]
 \node[minimum size=0mm, inner sep=0mm] (U) {};
 \node[rv] (1) at (150:1.1) {$1$};
 \node[rv] (2) at (270:1.1) {$2$};
 \node[rv] (3) at (30:1.1) {$3$};
 \draw[<->, very thick, color=red] (3.140) .. controls +(140:0.5) and +(40:0.5) .. (1.40);
 \draw[<->, very thick, color=red] (1.260) .. controls +(260:0.5) and +(160:0.5) .. (2.160);
 \draw[<->, very thick, color=red] (2.20) .. controls +(20:0.5) and +(280:0.5) .. (3.280);
 \draw[->, very thick, color=blue] (1) -- (2);
 \node[below of=2, yshift=8mm] {(b)};
\end{scope}
 \end{tikzpicture}
 \caption{Two mDAGs whose corresponding models are the same under any
   set of perfect node interventions.}
\label{fig:3cycle2}
\end{center}
\end{figure}

\end{rmk}

\section{Discussion} \label{sec:discuss}

The class of mDAGs provides a natural framework to represent the
margins of non-parametric Bayesian network models, and the structure
of these models under interventions when interpreted causally.  We
have given a partial characterization of the Markov equivalence class
of these models under the marginal Markov property, but a full result
is still an open problem.  As mentioned in Section \ref{sec:markov},
Markov equivalence for the nested Markov model is also open.

Fitting and testing models under the marginal Markov property is
difficult because no explicit representation of the model is
generally available, though the results in Section \ref{sec:markov}
give characterizations in special cases (see Example \ref{exm:equiv}).
The work of \citet{bonet:01} suggests that a general characterization
may be infeasible because of the complexity of the inequality
constraints.  The nested model provides a useful surrogate because, at
least in the discrete case, it is known to be smooth, has an explicit
parameterization, and has the same dimension as the marginal model
\citep{evans:complete}.  Since $\M_n(\G) \supseteq \M_m(\G)$, if the
nested model is a bad fit then so is the marginal model.  The converse
is not true however, so we potentially lose power by ignoring
inequality constraints.  \citet{evans:12} gives a graphical method for
deriving some inequality constraints, so these can in principle be
tested after fitting a larger model.  The approach of
\citet{richardson:11} gives a parameterization of the marginal model
for the mDAG in Figure \ref{fig:inst2}(a), incorporating inequality
constraints; a general parameterization for such models is another
open problem.

Alternatively it is possible to use a latent variable model $\M_l(\G)$
as a second surrogate, knowing that $\M_l(\G) \subseteq \M_m(\G)$.  If
the nested and latent variable models give similar fits (by some
suitable criterion) then we effectively have a fit for the marginal
model, which lies in between the two.  Methods for fitting models
under the marginal Markov property would enable powerful search
procedures for distinguishing between different causal models with
latent variables.

\subsection*{Acknowledgements}

We thank Steffen Lauritzen for helpful discussions, and two anonymous
referees for excellent suggestions, including the idea of using a
simplicial complex to represent the bidirected structure.

\newpage

\bibliographystyle{plainnat}
\bibliography{mybib}

\appendix

\section{Technical Proofs} \label{sec:proofs}

\subsection{Proof of Theorem \ref{thm:proj}}

\begin{lem} \label{lem:proj}
Let $\G(V \dot\cup U_1 \dot\cup U_2, \mathcal{E}_\G, \mathcal{B}_\G)$ be an mDAG, and $\H(V \dot\cup U_1, \mathcal{E}_{\H}, \mathcal{B}_{\H})$ the latent projection of $\G$ over $V \dot\cup U_1$.  Then
\begin{enumerate}[(a)]
\item for $a,b \in V$, there is a directed path from $a$ to $b$ in $\G$ with non-endpoint 
vertices in $U_1 \dot\cup U_2$ if and only if there is such a path in $\H$ with non-endpoint vertices 
in $U_1$;
\item there is a hidden common cause for $B \subseteq V$ in $\G$ with
  respect to $U_1 \dot\cup U_2$ if and only if there is a hidden common cause for $B$
  in $\H$ with respect to $U_1$.  
\end{enumerate}
\end{lem}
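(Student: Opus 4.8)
The plan is to prove the two biconditionals using the pair of moves that define latent projection: \emph{contracting} a directed path of $\G$ across $U_2$ into edges of $\H$, and conversely \emph{expanding} an edge of $\H$ into a directed path of $\G$ whose internal vertices lie in $U_2$. The one fact I will invoke repeatedly is that $\G$ and $\H$ are acyclic, so any directed \emph{walk} formed by concatenating directed paths is automatically a directed path (a repeated vertex would close a directed cycle); this is what lets me glue segments together without worrying about repetitions. Throughout I would write $W$ for the target set (called $B$ in the statement) and reserve $C$ for a bidirected facet, to avoid the clash with the letter $B$ in the definition of a hidden common cause.

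For part (a) I would argue each direction by tracking which vertices of a path lie in $V \dot\cup U_1$ versus $U_2$. For ``only if'', take a directed path $\pi$ from $a$ to $b$ in $\G$ with all non-endpoints in $U_1 \dot\cup U_2$, and read off the subsequence $a = v_0, v_1, \dots, v_m = b$ of its vertices lying in $V \dot\cup U_1$. Between consecutive $v_{i-1}, v_i$ the path $\pi$ stays in $U_2$, so $(v_{i-1},v_i)$ is an edge of $\H$; and each intermediate $v_i$ is a non-endpoint of $\pi$ lying in $V \dot\cup U_1$, hence in $U_1$. Thus $v_0 \to \dots \to v_m$ is the required $\H$-path. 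For ``if'', expand each edge of an $\H$-path (whose non-endpoints lie in $U_1$) into a $U_2$-internal $\G$-path and concatenate; the non-endpoints of the result lie in $U_1 \dot\cup U_2$, and acyclicity guarantees it is a genuine path and not merely a walk.

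Part (b) is the substantive half, and both directions hinge on relating a facet of $\G$ to a facet of $\H$. For ``only if'', suppose a facet $C \in \mathcal{B}_\G$ with $C \subseteq U_1 \dot\cup U_2 \cup W$, together with paths $\pi_w$ having vertices in $U_1 \dot\cup U_2 \cup \{w\}$, witnesses the hidden common cause in $\G$. I would let $C'$ be the set of vertices of $V \dot\cup U_1$ reachable from $C$ by a directed path all of whose vertices except the last lie in $U_2$; then $C$ itself witnesses that $C'$ shares a hidden common cause with respect to $U_2$, so $C' \in \mathcal{B}_\H$, and by downward closure of the simplicial complex $C'' \equiv C' \cap (U_1 \cup W) \in \mathcal{B}_\H$ too. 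The key observation is that the first vertex $d_w$ of $\pi_w$ lying in $V \dot\cup U_1$ must in fact lie in $U_1 \cup \{w\}$, because the only $V$-vertex on $\pi_w$ is $w$ itself; hence $d_w \in C''$, and the tail of $\pi_w$ from $d_w$ to $w$ contracts (as in part (a)) to an $\H$-path with non-endpoints in $U_1$. So $C''$ witnesses the hidden common cause for $W$ in $\H$ with respect to $U_1$. For ``if'', a facet $C' \in \mathcal{B}_\H$ with $C' \subseteq U_1 \cup W$ and $\H$-paths $\rho_w$ to each $w$ witnesses the cause in $\H$; since $C' \in \mathcal{B}_\H$ there is a facet $C \in \mathcal{B}_\G$ with $C \subseteq U_2 \cup C'$ witnessing that $C'$ shares a hidden common cause with respect to $U_2$. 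I would then show $C$ works in $\G$ with respect to $U_1 \dot\cup U_2$ by concatenating the $U_2$-internal $\G$-path from $C$ to the source $c'_w$ of $\rho_w$ with the expansion of $\rho_w$ into $\G$. The one vertex that could violate the condition is $c'_w$; but as a vertex of $\rho_w$ it already lies in $U_1 \cup \{w\}$, so every vertex of the concatenation lies in $U_1 \dot\cup U_2 \cup \{w\}$, acyclicity again making the walk a path.

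The main obstacle I anticipate is the bookkeeping in the ``only if'' direction of (b): choosing the intermediate facet $C'$ correctly and verifying that its restriction $C''$ both remains a face of $\H$ (via downward closure) and lies inside $U_1 \cup W$, while still reaching every $w$. The crucial supporting fact throughout---that a path witnessing a hidden common cause contains no observed vertex other than its own target---is precisely what keeps every constructed facet and path inside the permitted vertex sets, so I would isolate and state it explicitly before assembling the four arguments.
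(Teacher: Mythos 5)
Your proposal is correct and follows essentially the same route as the paper's proof: part (a) by contracting a $\G$-path to the subsequence of its $V \dot\cup U_1$ vertices and expanding $\H$-edges back into $U_2$-internal paths (with acyclicity turning concatenated walks into paths), and part (b) by passing through an intermediate face of $\mathcal{B}_\H$ and concatenating witness paths. The only difference is bookkeeping: the paper forms that intermediate face directly as the set of first non-$U_2$ vertices $u_b$ on the witness paths, whereas you take the full $U_2$-reachable set $C'$ and cut it down to $C''$ by downward closure of the simplicial complex---both steps are valid and equivalent in effect.
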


\begin{proof}
  (a): Suppose there is a directed path from $a$ to $b$ in $\G$ with
  non-endpoint vertices in $U_1 \cup U_2$.  If any non-endpoint vertices on the
  path are also in $U_1$, then the problem reduces to showing the
  existence of two shorter paths (acyclicity means we can always
  concatenate directed paths and still obtain a path).  On the other
  hand if all non-endpoint vertices are in $U_2$ then there
  is an edge $a \rightarrow b$ in $\H$.  

  Conversely if there is a directed path in $\H$ with intermediate vertices in 
  $U_1$ then each edge $c \rightarrow d$ in that path represents a directed path from $c$ to
  $d$ in $\G$ with intermediate vertices in $U_2$.

  (b): Let $B \subseteq V$ have a hidden common cause in $\G$ with
  respect to $U_1 \cup U_2$; for each $b \in B$ there is a directed path $\pi_b$
  to $b$ with all other vertices in $U_1 \cup U_2$ as described in the definition
  of a hidden common cause.  Let $u_b$ be the first vertex on $\pi_b$
  which is not in $U_2$ (certainly $b \notin U_2$, so this is well defined).
  Then the vertices $A = \{u_b : b \in B\}$ share a hidden common
  cause with respect to $U_2$, and hence $A \in
  \mathcal{B}_\H$.

  But for each $b \in B$, there is a directed path in $\G$ from $u_b$
  to $b$ with non-endpoints in $U_1 \cup U_2$, and hence by (a) there is a
  directed path in $\H$ from $u_b$ to $b$ with non-endpoints in
  $U_1$; hence the vertices in $B$ share a hidden common cause
  with respect to $U_1$ in $\H$.

  Conversely, suppose the elements of $B$ share a hidden common cause
  $A \in \mathcal{B}_\mathcal{H}$ with respect to $U_1$ in $\H$.  By the definition of latent
  projection, the vertices in $A$ must share a hidden common cause
  $C$ with respect to $U_2$ in $\G$.  It follows by
  concatenating the paths from $C$ to $A$, and
  from $A$ to $B$, that the vertices in $B$ share the hidden common
  cause $C$ with respect to $U_1 \cup U_2$ in $\G$.
\end{proof}

\begin{proof}[Proof of Theorem \ref{thm:proj}]
  It is sufficient to prove the first equality: let $\H =
  \mathfrak{p}(\G, V \cup U_1)$.  Let $a,b \in V$; by Lemma
  \ref{lem:proj}, there is a directed path from $a$ to $b$ in $\G$
  with all non-endpoint vertices in $U_1 \cup U_2$ if and only if
  there is such a path in $\H$ with all non-endpoint vertices
  in $U_1$.  Hence the directed edges in $\mathfrak{p}(\G, V)$ and
  $\mathfrak{p}(\H, V)$ are the same.

  Also by Lemma \ref{lem:proj}, for any set $B \subseteq V$, there is
  a hidden common cause in $\G$ for $B$ with respect to $U_1 \cup
  U_2$, if and only if there is one in $\H$ for $B$ with respect to
  $U_1$.  Hence the bidirected faces in $\mathfrak{p}(\G, V)$ and
  $\mathfrak{p}(\H, V)$ are also the same.
\end{proof}

\subsection{Measure Theoretic Results}

Let $X$ be a square integrable random variable, and $\mathcal{F}$ a 
$\sigma$-algebra.  Say that $X$ is $(\epsilon, \mathcal{F})$-measurable 
if $\E(X - \E[X \,|\, \mathcal{F}])^2 \leq \epsilon$

Let $\mathcal{F}^{-i} \equiv \mathcal{F}_1 \vee \cdots \vee \mathcal{F}_{i-1} \vee 
\mathcal{F}_{i+1}\vee \cdots\vee\mathcal{F}_{k}$.

\begin{lem} \label{lem:meas}
Let $X_i$ be $(\epsilon, \mathcal{F}^{-i})$-measurable for $i=1,\ldots,k$, where 
$\mathcal{F}_j$ are independent $\sigma$-algebrae.  

Then $\E(X_i - X_j)^2 \leq \epsilon$ for all $i,j$ implies that $X_i$ is
$(2\epsilon, \mathcal{F}^{-i,j})$-measurable for $i\neq j$.  In addition,
$\Var X_i \leq k \epsilon$.
\end{lem}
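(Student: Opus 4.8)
The plan is to work entirely in $L^2$ and to use the fact that, for a $\sigma$-algebra $\mathcal{F}$, the map $X \mapsto \E[X \mid \mathcal{F}]$ is the orthogonal projection onto the closed subspace $L^2(\mathcal{F})$; thus $(\epsilon,\mathcal{F})$-measurability of $X$ says exactly that $X$ lies within squared $L^2$-distance $\epsilon$ of that subspace. Independence of the $\mathcal{F}_l$ is the structural input that makes everything go through: writing $Q_S = \E[\,\cdot\mid \mathcal{F}_S]$ for $\mathcal{F}_S = \bigvee_{l \in S}\mathcal{F}_l$, independence forces these projections to commute with $Q_S Q_T = Q_{S \cap T}$, so that (modelling the $\mathcal{F}_l$ as coordinate $\sigma$-algebras on a product space) conditioning on $\mathcal{F}_S$ merely integrates out the coordinates outside $S$. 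I would abbreviate $E_l = Q_{-l}$ and $D_l = I - E_l$, so that $\|D_l\|\le 1$, the ranges of $E_l$ and $D_l$ are orthogonal, and $(\epsilon,\mathcal{F}^{-i})$-measurability of $X_i$ reads simply $\|D_i X_i\|^2 \le \epsilon$.

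For the first assertion the key identity is $\E[X_i \mid \mathcal{F}^{-i,j}] = E_i E_j X_i = E_j E_i X_i$. Using that $X_i$ is (essentially) $\mathcal{F}^{-i}$-measurable, the residual $X_i - \E[X_i \mid \mathcal{F}^{-i,j}]$ collapses, up to the defect $\|D_i X_i\|\le\sqrt\epsilon$, to $D_j X_i$. I would then transfer near-measurability across the pair by writing $D_j X_i = D_j X_j + D_j(X_i - X_j)$: the first term is small because $X_j$ is $(\epsilon,\mathcal{F}^{-j})$-measurable, and the second is at most $\|X_i - X_j\| \le \sqrt\epsilon$ since $D_j$ is a contraction. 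Recombining the orthogonal pieces by Pythagoras then produces a bound of the form $\|X_i - \E[X_i \mid \mathcal{F}^{-i,j}]\|^2 \le 2\epsilon$.

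For the variance I would use the orthogonal (ANOVA-type) decomposition of $X_i - \E X_i$ into components $X_i^{(S)}$ indexed by the nonempty $S \subseteq \{1,\dots,k\}$, available precisely because the $\mathcal{F}_l$ are independent; here $\|D_j X_i\|^2 = \sum_{S \ni j}\|X_i^{(S)}\|^2$ and $\Var X_i = \sum_{S \neq \emptyset}\|X_i^{(S)}\|^2$. The mass of $X_i$ on coordinate $i$ is controlled by the hypothesis, and the argument above controls, for each $j$, the mass on components meeting coordinate $j$ by (a constant times) $\epsilon$. Since every nonempty $S$ contains at least one coordinate, a covering argument—summing $\|D_j X_i\|^2$ over $j$, each nonempty $S$ being counted at least once—gives $\Var X_i \le k\epsilon$. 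Equivalently one can telescope $I - E_1\cdots E_k = \sum_{m} E_1\cdots E_{m-1} D_m$ into orthogonal, martingale-difference-like increments and bound each $\|D_m X_i\|^2$ separately.

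The main obstacle is controlling the cross terms so that \emph{removing each coordinate costs only $\epsilon$ and the $k$ costs add rather than compound}; in particular one must absorb the measurability defect $\|D_j X_j\|$ carefully, since a crude triangle-inequality transfer inflates the per-coordinate cost and the stated constants are tight only when this defect is genuinely negligible (as it is in the intended application, where each $X_i$ is exactly $\mathcal{F}^{-i}$-measurable). Independence is indispensable here: it supplies the orthogonality (Pythagoras and the ANOVA splitting) that prevents the slack from accumulating across coordinates, and pinning down the final constant is the delicate bookkeeping step of the proof.
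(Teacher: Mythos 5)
Your proposal runs on the same engine as the paper's proof: the two-index step drops $\mathcal{F}_i$ from the conditioning using independence and transfers the defect from $X_i$ to $X_j$ through $\E(X_i-X_j)^2\le\epsilon$, and the variance bound comes from removing one coordinate at a time. Where you genuinely differ is the packaging of the final step: the paper just iterates its three-line chain ``$k$ times at cost $\epsilon$ each'', whereas you telescope $I-E_1\cdots E_k=\sum_m E_1\cdots E_{m-1}D_m$ into mutually orthogonal increments (equivalently, cover the ANOVA components by the sets $\{S: S\ni j\}$ and sum $\|D_jX_i\|^2$ over $j$). That orthogonality is available precisely because the $\mathcal{F}_l$ are independent, and your commuting-projection identity $Q_SQ_T=Q_{S\cap T}$ is correct in that setting. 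One small repair: $X_i$ need not be $\bigvee_l\mathcal{F}_l$-measurable, so first split off the component of $X_i$ orthogonal to $L^2(\bigvee_l\mathcal{F}_l)$; it contributes to every $\|D_jX_i\|^2$, so your covering inequality $\Var X_i\le\sum_j\|D_jX_i\|^2$ survives.

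The obstacle you flag at the end is not mere bookkeeping---it is decisive, and your hesitation is better founded than the paper's own proof. In the stated generality the constants $2\epsilon$ and $k\epsilon$ are false: take $k=2$, let $Z$ be $\mathcal{F}_2$-measurable with $\E Z=0$ and $\E Z^2=1$, and set $X_1=2\sqrt{\epsilon}\,Z$, $X_2=\sqrt{\epsilon}\,Z$. Every hypothesis of Lemma \ref{lem:meas} holds (with equality), yet $\Var X_1=4\epsilon$, contradicting both conclusions, since $\mathcal{F}^{-1,2}$ is trivial. The culprit is exactly the cross term you worry about: the paper's middle inequality $\E(X_i-\E[X_j\mid\mathcal{F}^{-j}])^2\le\E(X_i-X_j)^2+\E(X_j-\E[X_j\mid\mathcal{F}^{-j}])^2$ requires $\E[(X_i-X_j)(X_j-\E[X_j\mid\mathcal{F}^{-j}])]\le 0$, which the example violates (that expectation equals $\epsilon>0$); moreover its opening equality invokes $(X_i,\mathcal{F}^{-i})\indep\mathcal{F}_i$, which only follows from \emph{exact} $\mathcal{F}^{-i}$-measurability. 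Both of your honest fallbacks are therefore the right outcome: the crude transfer $\|D_jX_i\|\le\|D_jX_j\|+\|X_i-X_j\|\le 2\sqrt{\epsilon}$ gives $\Var X_i\le(4k-3)\epsilon$, which is the correct order in $\epsilon$ and $k$ and is all that Lemma \ref{lem:notsat} actually uses; and in the intended application each $X_i$ is \emph{exactly} $\mathcal{F}^{-i}$-measurable, so $D_iX_i=0$, $\|D_jX_i\|=\|D_j(X_i-X_j)\|\le\sqrt{\epsilon}$, and your orthogonal telescoping yields $\Var X_i\le(k-1)\epsilon$, recovering the stated constant there. So your strategy, carried out with the defect terms tracked as you describe, proves a corrected form of the lemma that the paper's own chain of inequalities does not.
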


\begin{proof}
Since $X_i, \mathcal{F}^{-i} \indep \mathcal{F}_i$,
\begin{align*}
\E(X_i - \E[X_i \,|\, \mathcal{F}^{-i,j}])^2 &= \E(X_i - \E[X_i \,|\, \mathcal{F}^{-j}])^2\\
  &\leq \E(X_i - \E[ X_j \,|\, \mathcal{F}^{-j}])^2\\
  &\leq \E(X_i - X_j)^2 + \E(X_j - \E[ X_j \,|\, \mathcal{F}^{-j}])^2\\
  &\leq 2\epsilon,
\end{align*}
so $X_i$ is $(2\epsilon, \mathcal{F}^{-i,j})$-measurable.  Repeating this proof 
shows that $X_i$ is $(k\epsilon, \emptyset)$-measurable, which is to say that 
its variance is at most $k\epsilon$.
\end{proof}





\begin{lem} \label{lem:split}
  Let $X$ be a $\sigma(Y,Z)$-measurable random variable, and $(X,Y,Z)$
  have joint distribution $P$.  Then there exist random variables $U,
  W$ such that: 
\begin{enumerate}[(i)]
\item $U \indep W$;
\item $X$ is $\sigma(Y,U)$-measurable;
\item $Z$ is $\sigma(W,X,Y)$-measurable;
\item $(X, Y, Z)$ has the appropriate joint distribution $P$.  
\end{enumerate}
\end{lem}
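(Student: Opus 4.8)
The plan is to prove the lemma by decomposing $P$ via the chain rule and then realizing each conditional piece with its own independent source of uniform noise. Since we work with Lebesgue-Rokhlin (standard Borel) spaces, every probability kernel admits a representation by a single uniform variable together with a measurable function (the noise-outsourcing, or kernel-randomization, lemma), and by Remark \ref{rmk:arb} we may take all noise variables uniform on $(0,1)$. Concretely, I would factor $P$ as $P_Y \otimes P_{X \mid Y} \otimes P_{Z \mid X, Y}$, where $P_Y$ is the marginal law of $Y$ and the remaining factors are the relevant conditional kernels.

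First I would build the underlying space as a product carrying $Y \sim P_Y$ together with two independent uniforms $U, W$ on $(0,1)$, taken jointly independent of $Y$; in particular $U \indep W$, which secures (i) immediately. Applying the randomization lemma to the kernel $P_{X \mid Y}$ then yields a measurable $h$ with $X = h(Y, U)$ such that $(X, Y) \sim P_Y \otimes P_{X \mid Y}$. Because $U \indep Y$ this reproduces the correct conditional law of $X$ given $Y$, and $X$ is manifestly $\sigma(Y, U)$-measurable, which is (ii).

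Next I would apply the randomization lemma a second time, to the kernel $P_{Z \mid X, Y}$, obtaining a measurable $k$ with $Z = k(X, Y, W)$. The point that must be checked is that $W$ is independent of the conditioning pair $(X, Y)$: by construction $W \indep (Y, U)$, and $(X, Y)$ is a deterministic function of $(Y, U)$, so indeed $W \indep (X, Y)$ and the randomization lemma applies, reproducing the correct conditional law of $Z$ given $(X, Y)$. Combining the two stages gives $(X, Y, Z) \sim P$, which is (iv), while $Z = k(X, Y, W)$ is $\sigma(W, X, Y)$-measurable, which is (iii). The hypothesis that $X$ is $\sigma(Y, Z)$-measurable is then automatically respected, being a $P$-almost-sure relation and $P$ being reproduced exactly.

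The main obstacle is the interaction between requirements (i)--(iii): one must obtain both conditional laws correctly while simultaneously keeping the two noise sources independent. The device that resolves this is to introduce $U$ and $W$ as independent from the outset and to verify that the second-stage noise $W$ stays independent of $(X, Y)$ precisely because $(X, Y)$ is a function of the first-stage inputs $(Y, U)$, against which $W$ was chosen independent. Everything else reduces to a routine verification of the joint law through the chain-rule factorization.
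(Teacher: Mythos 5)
Your proposal is correct and follows essentially the same route as the paper: both realize $X$ as a measurable function of $(Y,U)$ and $Z$ as a measurable function of $(X,Y,W)$ via the noise-outsourcing property of Lebesgue--Rokhlin spaces \citep[Theorem 2.2]{chentsov:82}, with the key observation in each case being that $W$, chosen independent of $(Y,U)$, remains independent of the pair $(X,Y)$. Your write-up merely makes the chain-rule factorization and the verification of the joint law more explicit than the paper's terse version.
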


\begin{proof}
  Using the fact that our probability space is Lebesgue-Rokhlin, there
  exists a measurable function $g$ such that if $U$ is a uniform
  random variable independent of $Y$ then $(X,Y) \equiv (g(Y, U), Y)$
  has the correct marginal distribution \citep[][Theorem
  2.2]{chentsov:82}.  Similarly, let $W$ be a uniform random variable
  independent of $U,Y$ (and therefore $X$), and let $h$ be a
  measurable function such that $(X,Y,Z) \equiv (X,Y,h(X,Y,W))$ has
  the same distribution as $(X,Y,Z)$.

  By construction, (i)-(iv) are satisfied.  
\end{proof}





\begin{lem} \label{lem:splitnode} Let $\G$ be an mDAG containing a
  bidirected facet $B = C \dot\cup D$ such that: for any $c \in C$, any
  bidirected edge containing $c$ is a subset of $B$; and $\pa_\G(d)
  \supseteq \pa_\G(C)$ for each $d \in D$.

  Take $P \in \M_m(\G)$.  Then there exists $Q \in \M(\bar{\G})$ such
  that under $Q$ we have $Y_B = (Y_C, Y_D)$, where:

\begin{enumerate}[(i)]
\item $Y_C \indep Y_D$;
\item each $X_c$ is $\sigma(X_{\pa_\G(c)}, Y_C)$-measurable
\item each $X_d$ is $\sigma(X_C, X_{\pa_\G(C)}, X_{\pa_\G(d)}, Y_{\mathcal{B}(d) \setminus B}, Y_D)$-measurable;
\item the $V$-margin of $Q$ is $P$.
\end{enumerate}
\end{lem}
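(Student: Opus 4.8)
The plan is to realize the given $P$ by an explicit structural construction on the canonical DAG and then split the latent attached to $B$ using Lemma~\ref{lem:split}. Since $P \in \M_m(\G) = \M(\bar{\G}, V)$, fix a distribution $Q_0 \in \M(\bar{\G})$ on $(X_V, Y_{\mathcal{B}})$ whose $V$-margin is $P$, generated by the structural equation property, so that each vertex is a measurable function of its parents in $\bar{\G}$ together with a private error. Hypothesis~(i) is used first: because every bidirected face meeting $C$ is contained in $B$, and $B$ is a facet, $B$ is the only facet containing any $c \in C$; the recursion therefore gives $X_c = f_c(X_{\pa_\G(c)}, Y_B, E_c)$ for each $c \in C$, with $Y_B$ the only latent feeding $C$.

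Next I would split $Y_B$. Writing $X_C = (X_c : c\in C)$, the vector $X_C$ is a measurable function of $X_{\pa_\G(C)}$, of $Y_B$, and of the independent errors $E_C = (E_c : c\in C)$; since the SEP prescribes only measurability and not a fixed error space, I may fold $E_C$ into the latent and regard $X_C$ as $\sigma(X_{\pa_\G(C)}, Y_B)$-measurable. Applying Lemma~\ref{lem:split} with $X \leftarrow X_C$, $Y \leftarrow X_{\pa_\G(C)}$ and $Z \leftarrow Y_B$ produces random variables $Y_C \indep Y_D$ (this is~(i)) such that $X_C$ is regenerated from $(X_{\pa_\G(C)}, Y_C)$ at the level of the vector, and $Y_B$ is recoverable as a function of $(X_C, X_{\pa_\G(C)}, Y_D)$, while the joint law of $(X_C, X_{\pa_\G(C)}, Y_B)$ is preserved.

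I then define $Q$ by taking the latent attached to the facet $B$ to be the pair $(Y_C, Y_D)$, so that both are formally parents of every vertex of $B$ in $\bar{\G}$; all other latents are retained, and each vertex of $D$, and each remaining vertex, is regenerated by its original mechanism fed with the value of $Y_B$ reconstructed from $(X_C, X_{\pa_\G(C)}, Y_D)$. Hypothesis~(ii), $\pa_\G(d) \supseteq \pa_\G(C)$ for $d \in D$, guarantees that $d$ already has $X_{\pa_\G(C)}$ among its admissible inputs, and the newly available $X_C$ supplies exactly what is needed to reconstruct $Y_B$; this is~(iii). Because the reconstructed $Y_B$ has, jointly with $(X_C, X_{\pa_\G(C)})$, the same law as under $Q_0$ and the downstream mechanisms are unchanged, the full joint law, hence the $V$-margin, equals $P$, giving~(iv). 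To see that $Q \in \M(\bar{\G})$ one checks the two nontrivial cases: for $c$, since $Y_C$ is a coordinate of the latent $(Y_C, Y_D)$, measurability~(ii) is exactly the canonical SEP; for $d$, the dependence on $X_C$ is harmless because $X_C$ is $\sigma(X_{\pa_\G(C)}, Y_C)$-measurable with $X_{\pa_\G(C)} \subseteq X_{\pa_\G(d)}$ and $Y_C$ a coordinate of the latent.

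The delicate step, and the one I expect to be the genuine obstacle, is obtaining the individual-parent structure in~(ii): that each $X_c$ is a function of its own parents $X_{\pa_\G(c)}$ and the shared $Y_C$ alone, not of all of $X_{\pa_\G(C)}$. Applied verbatim to the vector $X_C$, Lemma~\ref{lem:split} regenerates it through a single map $g(X_{\pa_\G(C)}, Y_C)$ that need not respect the per-vertex parent sets; yet~(iii) simultaneously forces $Y_B$ to be recoverable from the realized values $X_C$ together with $Y_D$, which pushes $Y_C$ to be as uninformative as possible and so conflicts with the independence $Y_C \indep Y_D$. The way through is to exploit the conditional factorization $P(X_C \mid X_{\pa_\G(C)}, Y_B) = \prod_{c\in C} P(X_c \mid X_{\pa_\G(c)}, Y_B)$, which holds under $Q_0$ because, given $Y_B$, the $X_c$ are generated independently from their own parents. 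Running the Lebesgue--Rokhlin construction behind Lemma~\ref{lem:split} adapted to this product form lets me assign to $Y_C$ precisely the per-vertex seeds needed to reproduce each factor, preserving individual structure, while the residual information in $Y_B$ is absorbed into the independent piece $Y_D$, from which, together with the realized $X_C$, the original $Y_B$ is reconstructed.
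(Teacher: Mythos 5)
Your proposal is correct and follows essentially the same route as the paper: the paper's entire proof is the one-line reduction to Lemma \ref{lem:split} with $X = X_C$, $Y = X_{\pa_\G(C)}$ and $Z = X_D$, with the extra variables $X_{\pa_\G(d)}$ and $Y_{\mathcal{B}(d)\setminus B}$ folded into the conditioning and dismissed as a trivial extension. Your only structural deviation is to take $Z = Y_B$ (after absorbing the errors $E_C$ into the facet latent, which is legitimate since that latent is a parent of all of $B$) and then to redraw $Y_B$ conditionally from $(X_C, X_{\pa_\G(C)}, Y_D)$ and push it back through the original mechanisms $f_d$; this is interchangeable with the paper's instantiation, since regenerating $X_D$ from its conditional law given $(X_C, X_{\pa_\G(C)}, \ldots)$ and regenerating a surrogate for $Y_B$ and applying $f_d$ induce the same joint law, and your version delivers conclusion (iii) a little more directly.

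The obstacle you single out is genuine, and the paper does not address it explicitly: Lemma \ref{lem:split} applied to the block $X_C$ gives only $\sigma(X_{\pa_\G(C)}, Y_C)$-measurability, whereas (ii) claims the per-vertex version, which is what Proposition \ref{prop:equiv} actually needs (in the modified graph each $c$ must be generated from $X_{\pa_\G(c)}$ and the new facet $C$ alone). The per-vertex form is not optional even for realizability: the hypotheses allow $\pa_\G(C)$ to contain descendants of $C$ (e.g.\ $c \rightarrow a \rightarrow c'$ with $a \notin B$), and then a single block map $X_C = g(X_{\pa_\G(C)}, Y_C)$ cannot be implemented recursively at all. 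Your repair via per-vertex seeds is the right one, with one caveat: in that interleaved case your displayed identity $P(X_C \mid X_{\pa_\G(C)}, Y_B) = \prod_{c} P(X_c \mid X_{\pa_\G(c)}, Y_B)$ is false as a statement about conditional distributions (conditioning on a descendant $a$ of $c$ distorts the factor for $c$); it should instead be executed as a sequential construction in a topological order of $\bar\G$, generating each $X_c$ from $(X_{\pa_\G(c)}, Y_C)$, each intermediate vertex from its retained mechanism, and drawing the $Y_D$-surrogate for $Y_B$ only after all of $C \cup \pa_\G(C)$ has been generated. Such an order always exists because the hypothesis $\pa_\G(d) \supseteq \pa_\G(C)$ forces $D \cap \an_\G(C \cup \pa_\G(C)) = \emptyset$: a directed path from some $d \in D$ into $C \cup \pa_\G(C)$ would make a vertex of $\pa_\G(C) \subseteq \pa_\G(d)$ a descendant of $d$, creating a cycle. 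With that adjustment your argument is complete and, unlike the paper's, makes the one delicate step explicit.
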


\begin{proof}
  This is just an application of Lemma \ref{lem:split} with $X = X_C$,
  $Y = X_{\pa_\G(C)}$, $Z = X_D$, and some extra variables
  $X_{\pa_\G(d)}, Y_{\mathcal{B}(d) \setminus B}$ on which $Z$ can
  depend (but this extension is trivial).
\end{proof}

In other words, the result says that we can decompose $Y_B$ into two
independent pieces, one of which determines the value of $X_C$ (once
its parents are known) and contains no further information, in the
sense that it is irrelevant once $X_C$ and $X_{\pa(C)}$ are known.

\end{document}